\definecolor{cadmiumgreen}{rgb}{0.0, 0.42, 0.24}
\newtheorem{thm}{Theorem}[section]
\newtheorem*{thm*}{Theorem}
\newtheorem{theorem}[thm]{Theorem}
\newtheorem{obs}[thm]{Observation}
\newtheorem{prop}[thm]{Proposition}
\newtheorem{lem}[thm]{Lemma}
\newtheorem{cor}[thm]{Corollary}
\theoremstyle{definition}
\newtheorem{dfn}[thm]{Definition}
\newtheorem{thmdfn}[thm]{Theorem - Definition}
\newtheorem{definition}[thm]{Definition}
\newtheorem{eg}[thm]{Example}
\newtheorem{rmk}[thm]{Remark}
\newcommand{\RR}{\mathbb{R}}
\newcommand{\ZZ}{\mathbb{Z}}
\newcommand{\QQ}{\mathbb{Q}}
\newcommand{\cC}{\mathcal{C}}
\newcommand{\cT}{\mathcal{T}}
\DeclareMathOperator{\val}{val}
\DeclareMathOperator{\Div}{Div}
\DeclareMathOperator{\Eff}{Eff}
\DeclareMathOperator{\Pic}{Pic}
\DeclareMathOperator{\Jac}{Jac}
\DeclareMathOperator{\PL}{PL}
\DeclareMathOperator{\Divisor}{\Delta}
\DeclareMathOperator{\zeros}{\Divisor^{+}}
\DeclareMathOperator{\poles}{\Divisor^{--}}
\DeclareMathOperator{\im}{im}
\DeclareMathOperator{\stable}{st}
\DeclareMathOperator{\sgn}{sgn}
\newcommand{\Jactors}[1]{\Jac({#1})_\mathrm{tors}}
\newcommand{\Jtors}[1]{\Jac({#1})_{\rm tors}}
\newcommand{\Abelj}[1]{\iota_{#1}}
\newcommand{\Abeljh}[2]{\iota_{#1}^{(#2)}}
\newcommand{\gamind}{\gamma^{\rm ind}}
\newcommand{\tpacket}[1]{ \{ [{#1}] \}_\mathrm{tors}}
\newcommand{\trees}{\cT}
\newcommand{\cycles}{\cC}
\newcommand{\potent}[2]{j_{#1}^{#2}}
\begin{document}
\title{The tropical Manin--Mumford conjecture}
\author{David Harry Richman}
\date{\today.} 

\begin{abstract}
In analogy with the Manin--Mumford conjecture for algebraic curves,
one may ask how a metric graph under the Abel--Jacobi embedding  intersects torsion points of its Jacobian.
We show that the number of torsion points is finite
for metric graphs of genus $g\geq 2$
which are biconnected and have edge lengths which are ``sufficiently irrational'' in a precise sense.
Under these assumptions, 
the number of torsion points
is bounded by $3g-3$.
Next we study bounds on the number of torsion points in the image of higher-degree Abel--Jacobi embeddings,
which send $d$-tuples of points to the Jacobian.
This motivates the definition of the ``independent girth'' of a graph,
a number which is a sharp upper bound for $d$ such that the higher-degree  Manin--Mumford property holds.
\end{abstract}
\subjclass[2020]{14T25, 05C10, 14H40, 14K12, 26C15, 05C38, 05C50}

\keywords{tropical curve, metric graph, Jacobian}

\maketitle

\setcounter{tocdepth}{1}
\tableofcontents

\section{Introduction}

Suppose $X$ is a smooth algebraic curve,
let  $\Jac(X)$ denote the Jacobian of $X$,
and let $\Abelj{q} : X \to \Jac(X)$ denote
the Abel--Jacobi map with basepoint $q\in X$.
Raynaud's theorem~\cite{Ray},
formerly the Manin--Mumford conjecture, 
states that if $X$ has genus $g\geq 2$ 
then the image $\Abelj{q}(X)$ 
intersects only finitely many torsion points of $\Jac(X)$. 


\subsection{Statement of results}
The setup above makes sense when the algebraic curve is replaced with a compact, connected metric graph.
We assume throughout this paper that all metric graphs are compact.
Say a metric graph $\Gamma$ 
satisfies the {\em Manin--Mumford condition}
if the image of the Abel--Jacobi map 
$\Abelj{q} : \Gamma \to \Jac(\Gamma)$ 
intersects only finitely many torsion points of $\Jac(\Gamma)$, 
for every choice of basepoint $q\in \Gamma$.

The {\em genus} of a metric graph is the dimension of its first homology group $H_1(\Gamma, \RR)$.
As with algebraic curves, the interesting case to consider 
is when $\Gamma$ has genus $g\geq 2$.
What happens in smaller genus?
When $\Gamma$ has genus zero, the 
Manin--Mumford condition holds vacuously
since $\Jac(\Gamma)$ is a single point.
When $\Gamma$ has genus one, the Abel--Jacobi map is surjective
and $\Jactors{\Gamma}$ is infinite,
so the Manin--Mumford condition does not hold.

\begin{theorem}[Conditional uniform Manin--Mumford bound]
\label{thm:intro-conditional-mm}
Let $\Gamma$ be a  connected metric graph of genus $g\geq 2$.
If the set of torsion points
$ \Abelj{q}(\Gamma)\cap \Jtors{\Gamma} $
is finite,
then we have the uniform bound
\[ \#(\Abelj{q}(\Gamma)\cap \Jtors{\Gamma}) \leq 3g-3. \qedhere\]
\end{theorem}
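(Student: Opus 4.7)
The plan is to bound the torsion count by the number of edges in a combinatorial model of $\Gamma$ with all vertex degrees at least three. First I would pick a combinatorial model $G=(V,E)$ of $\Gamma$ with $\deg(v)\geq 3$ at every vertex. Euler's formula $|V|-|E|=1-g$ together with the handshake inequality $2|E|\geq 3|V|$ yields $|E|\leq 3g-3$, so it suffices to show that the total number of torsion points is at most $|E|$.

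The key local claim is that each closed edge of $G$ carries at most one torsion point under the finiteness hypothesis. On an edge $e$ of length $\ell_e$, the Abel--Jacobi map lifts through the universal cover to an affine map $t\mapsto c_e+tv_e$ in $\RR^g$, where $v_e$ is the harmonic slope on $e$ (the vector of values of a fixed basis of harmonic $1$-forms). Torsion points on $\Abelj{q}(\bar e)$ correspond to parameters $t\in[0,\ell_e]$ with $c_e+tv_e\in\Lambda\otimes\QQ$, where $\Lambda$ is the period lattice in $\RR^g$. If two distinct such parameters $t_1\neq t_2$ existed, then $(t_2-t_1)v_e\in\Lambda\otimes\QQ$, and taking arbitrary rational multiples of this vector produces infinitely many torsion parameters on $e$, contradicting finiteness whenever $v_e\neq 0$. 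When $v_e=0$ the entire edge collapses to a single point in $\Jac(\Gamma)$, so trivially at most one torsion point lies on $\bar e$.

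Given this per-edge bound, the total count should follow from a handshake-type argument. Finiteness excludes edges with two interior torsion points, as well as edges carrying a torsion endpoint together with any additional torsion on $\bar e$; in particular no edge has both endpoints torsion. Letting $T_i$ and $T_v$ denote the number of torsion points lying in edge interiors and at vertices respectively, I would sum the per-edge bound (each torsion vertex $v$ being counted once for each of its $\deg(v)\geq 3$ incident edges) to obtain
\[T = T_i + T_v \leq |E| - 2T_v \leq |E| \leq 3g-3.\]

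The main obstacle is making the per-edge bound fully rigorous while handling the corner cases: the existence of a degree-$\geq 3$ model when $\Gamma$ has loops or extra degree-$2$ structure; bridges, where $v_e=0$ and the collapse argument is what saves us; and the possibility that the model $G$ identifies two torsion vertices with the same image in $\Jac(\Gamma)$, which must be accounted for in the handshake count. I expect each of these to be technical but not essential; the real content of the proof lies in the rational-linear analysis on a single edge.
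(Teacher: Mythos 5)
Your approach is essentially the same as the paper's: reduce to a stable model with at most $3g-3$ edges, and show that finiteness forces each edge to contribute at most one torsion class, via the observation that two torsion parameters on an edge force a whole line of rational points because the Abel--Jacobi map is affine on edges. The paper phrases the per-edge step (its Lemma~\ref{lem:infinite-torsion}) as a convexity statement about the cells $\Eff(e)\subset\Pic^1(\Gamma)$ rather than in terms of lifts to $\RR^g$, but the substance is identical. Two small remarks. First, the corner cases you flag are all dispatched at once by passing to the stabilization $\stable(\Gamma)$ (Propositions~\ref{prop:stabilization} and~\ref{prop:jacobian-stabilization}): the deformation retraction $r:\Gamma\to\stable(\Gamma)$ induces an isomorphism of Jacobians carrying $\Abelj{q}(\Gamma)$ to $\Abelj{r(q)}(\stable(\Gamma))$, and the stable model then has $\#E\leq 3g-3$; a degree-$\geq 3$ model of $\Gamma$ itself need not exist. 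Second, your closing handshake count is more than you need and, as written, slips between counting points of $\Gamma$ and counting classes in $\Jac(\Gamma)$ (note that when $v_e=0$ there are uncountably many points of $\Gamma$ in one torsion class, so $T_i+T_v$ counted in $\Gamma$ could be infinite). The clean formulation, which is the paper's, is to work entirely in $\Eff^1(\Gamma)$: the closed cells $\Eff(e)$ cover $\Eff^1(\Gamma)$, each meets the torsion packet in at most one class, so the packet has at most $\#E(G)\leq 3g-3$ elements, with no double-counting correction required.
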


Unlike the case of algebraic curves,
for a metric graph the genus condition  $g\geq 2$
is not sufficient to imply that $\#(\Abelj{q}(\Gamma)\cap \Jactors{\Gamma})$
is finite.
On a graph with unit edge lengths 
the degree-zero divisor classes 
supported on vertices form a finite abelian group, 
known as the {\em critical group} of the graph (see Section~\ref{subsec:critical-group}). 
In particular, vertex-supported divisor classes are always torsion.
\begin{obs}
\label{obs:intro-mm-rational}
Suppose $\Gamma = (G,\ell)$ is a metric graph of genus $g\geq 2$
whose edge lengths are all rational,
i.e. $\ell(e) \in \QQ_{>0}$ for all $e \in E(G)$.
Then $\Gamma$ does not satisfy the Manin--Mumford condition.
\end{obs}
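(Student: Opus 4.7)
The plan is to choose a basepoint $q$ and exhibit infinitely many $x \in \Gamma$ with $\Abelj{q}(x) \in \Jtors{\Gamma}$. The mechanism is that for a combinatorial graph the critical group is finite, and rational edge lengths on $\Gamma$ let us realize its classes as honest torsion elements of the tropical Jacobian. Concretely, take $q$ to be a vertex of $G$, and fix $N \in \ZZ_{>0}$ with $N\ell(e) \in \ZZ$ for every edge $e$. For each $k\geq 1$, subdivide every edge $e$ of $G$ into $kN\ell(e)$ pieces of equal length $1/(kN)$, obtaining a combinatorial model $G_k$ of $\Gamma$ which still contains $q$ as a vertex. The key compatibility check is that every combinatorial principal divisor on $G_k$ is already a tropical principal divisor on $\Gamma$: given $f : V(G_k) \to \ZZ$, extend the rescaled function $f/(kN)$ linearly along each edge of $G_k$; since the resulting slopes $f(w)-f(v)$ are integers, this is a valid tropical rational function on $\Gamma$, whose divisor equals (up to sign) $Lf$, where $L$ is the combinatorial Laplacian. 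Hence there is a natural homomorphism $K(G_k) \to \Jac(\Gamma)$ from the finite critical group, and every class $\Abelj{q}(v) = [v]-[q]$ with $v \in V(G_k)$ lies in its image and is therefore torsion.

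To finish, note that $\bigcup_{k \geq 1} V(G_k)$ is exactly the set of points of $\Gamma$ at rational distance from one of its endpoints along some edge, and so is dense in $\Gamma$. The image $\Abelj{q}(\Gamma)$ is a connected subset of $\Jac(\Gamma)$ (continuous image of the connected space $\Gamma$), and it cannot be a single point: if $\Abelj{q}$ were constant then $[x]-[y]$ would be principal for all $x,y \in \Gamma$, forcing $\Jac(\Gamma)=0$ and contradicting $g\geq 2$. A connected subset of $\Jac(\Gamma)$ with more than one point is uncountable (the continuous distance-to-a-basepoint map surjects onto a nondegenerate interval), so its dense subset $\Abelj{q}\bigl(\bigcup_k V(G_k)\bigr)$ must be infinite. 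Every element of this set is torsion, so $\Gamma$ fails the Manin--Mumford condition at $q$.

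The only delicate step is setting up the compatibility map $K(G_k) \to \Jac(\Gamma)$; the slope-versus-length bookkeeping is the place where the rational edge-length hypothesis is actually invoked, and everything else is formal.
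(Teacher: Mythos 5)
Your proof is correct and takes essentially the same approach as the paper's: subdivide to a combinatorial model with uniform edge lengths, invoke finiteness of the critical group to get torsion classes from vertices, and pass to finer subdivisions. The one place you are actually a bit more careful than the paper is the final step — the paper directly asserts $\#(\Abelj{q}(\Gamma)\cap\Jtors\Gamma) \geq \#V(G^{(k)})$ and lets $k\to\infty$, which glosses over the possibility of the Abel–Jacobi map identifying subdivision vertices; your density-plus-connectedness argument closes that gap cleanly.
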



We say that a property holds for a {\em very general} point of some (real) parameter space 
if it holds outside of a countable collection of codimension-$1$ families.
Recall that a graph $G$ is
{\em biconnected} 
(or {\em two-connected})
if $G$ is connected after deleting any vertex.

\begin{thm}[Generic tropical Manin--Mumford]
\label{thm:intro-manin-mumford}
Let $G$ be a finite connected graph of genus $g\geq 2$.
If $G$ is biconnected,
then for a very general choice of edge lengths $\ell : E(G) \to \RR_{>0}$,
the metric graph
$\Gamma = (G,\ell)$ 
satisfies the Manin--Mumford condition.
\end{thm}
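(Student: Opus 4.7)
The plan is to reduce the Manin--Mumford condition to a per-edge irrationality condition on the velocity vectors of the Abel--Jacobi map, and then to verify this condition holds outside a countable union of codimension-one subvarieties of edge-length parameter space.

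First, I would exploit the piecewise-linear structure of $\Abelj{q}$: on each edge $e$ of $\Gamma$, parametrized by arclength, the restriction $\Abelj{q}|_e$ is an affine map into the real torus $\Jac(\Gamma) \cong H_1(G,\RR)/H_1(G,\ZZ)$, with constant velocity vector $v_e \in H_1(G,\RR)$.  Concretely, $v_e$ is the $\ell$-orthogonal projection of the characteristic vector $\chi_e \in \RR^{E(G)}$ onto the cycle subspace $H_1(G,\RR)$, so in any fixed $\ZZ$-basis of $H_1(G,\ZZ)$ its coordinates are rational functions of $\ell \in \RR^{E(G)}_{>0}$.  Biconnectedness (no bridges) already guarantees $v_e \neq 0$ for every edge.

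The second ingredient is the criterion that a line segment $t\mapsto p + tv$ in a real torus $\RR^g/\Lambda$ with $v\neq 0$ meets infinitely many torsion points if and only if $v$ is a nonzero real scalar multiple of a vector in $\Lambda\otimes\QQ$.  Applying this edge-by-edge yields: the set $\Abelj{q}(\Gamma) \cap \Jtors{\Gamma}$ is finite if and only if, for every edge $e$, the velocity $v_e$ is not $\RR$-proportional to any nonzero $r \in H_1(G,\QQ)$.  Consequently the set of ``bad'' edge lengths is contained in the countable union
\[ B \;=\; \bigcup_{e \in E(G)} \bigcup_{[r]\in\PP(H_1(G,\QQ))} \bigl\{\ell \in \RR^{E(G)}_{>0} : v_e(\ell) \wedge r = 0\bigr\}, \]
whose inner sets are zero loci of polynomial equations in $\ell$ after clearing denominators in the rational expressions for $v_e$.

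The main step, and the main obstacle, is showing that each individual set $\{\ell : v_e(\ell)\wedge r = 0\}$ is a \emph{proper} subvariety of $\RR^{E(G)}_{>0}$.  This is exactly where biconnectedness enters.  If $G$ has a cut vertex, then $H_1(G,\RR)$ decomposes as a direct sum $\bigoplus H_1(B_i,\RR)$ over the blocks $B_i$, each summand being a rational subspace, and the velocity of an edge $e \in B_i$ is trapped inside $H_1(B_i,\RR)$; when $B_i$ has genus one the vector $v_e$ lies identically on a rational line and the corresponding set is all of $\RR^{E(G)}_{>0}$, so Manin--Mumford genuinely fails along that edge.  Biconnectedness removes this obstruction by making $G$ a single block.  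The lemma one needs is:

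\emph{Key lemma.} For every edge $e$ of a biconnected graph $G$ of genus $g \geq 2$ and every nonzero $r \in H_1(G,\QQ)$, there exist edge lengths $\ell$ with $v_e(\ell) \not\parallel r$.

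I would approach the key lemma by writing $v_e$ explicitly via weighted spanning-tree enumerators (matrix--tree theorem) and exhibiting a specific monomial in $\ell$ that survives in some coordinate of $v_e \wedge r$, obstructing the polynomial identity $v_e \wedge r \equiv 0$; alternatively, an inductive argument along an ear decomposition of $G$ should work, each new ear introducing fresh length parameters that can be tuned to move $v_e$ off any prescribed rational direction.  Granted the key lemma, $B$ is a countable union of proper real-analytic subvarieties of $\RR^{E(G)}_{>0}$, so its complement is very general, and every $\ell$ in the complement yields a metric graph satisfying the Manin--Mumford condition by the per-edge criterion above.
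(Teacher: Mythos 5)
Your reformulation via Abel--Jacobi velocity vectors $v_e$ is a valid and genuinely different organization of the argument. The paper does not pass through the torus geometry of $\Jac(\Gamma)$ directly: it works with potential functions, characterizing ``$[x-y]$ is torsion'' as ``all slopes of $\potent{y}{x}$ are rational'' (Lemma~\ref{lem:torsion-rational-slope}), reducing the possibility of two points of an edge $e$ lying in one torsion packet to non-torsion of $[e^+-e^-]$ on the deleted graph $G\setminus e$ (Proposition~\ref{prop:torsion-edge-deletion}), and finally proving that a nonzero vertex-supported class is non-torsion for very general $\ell$ (Propositions~\ref{prop:slope-01} and~\ref{prop:vertex-torsion}). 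Your route avoids the edge-deletion reduction and works with the global velocity vector, which is conceptually cleaner, at the cost of manipulating $v_e$ as a vector rather than one scalar slope at a time.

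The genuine gap is that you state but do not prove the Key Lemma, and that is where essentially all of the theorem's content lives; a proposed plan (``matrix--tree monomial counting'' or ``ear-decomposition induction'') is not a proof, and it is not obvious either can be carried out without the paper's key device. That device, in Proposition~\ref{prop:slope-01}, is a two-limit argument: to show a potential slope across $e$ is a nonconstant function of $\ell$, one sends $\ell(e)\to\infty$ (driving the slope to $0$) and, separately, sends the edge lengths outside a fixed $x$--$y$ path through $e$ to infinity (driving the slope to $1$); a rational function of $\ell$ (Kirchhoff, Theorem~\ref{thm:kirchhoff}) taking two distinct limiting values is nonconstant. You could prove your Key Lemma in the same spirit once you observe that it suffices to show the projective direction $[v_e(\ell)]\in\PP(H_1(G,\RR))$ is not independent of $\ell$: a nonconstant direction agrees with any fixed $[r]$ only on a proper locus, and that locus $\{v_e\wedge r=0\}$ is Zariski-closed by Kirchhoff. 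Without some concrete argument of this kind the proposal is incomplete.

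One small imprecision: the torus criterion as you state it (``$t\mapsto p+tv$ meets infinitely many torsion points iff $v$ is real-proportional to a rational vector'') is not an iff for arbitrary $p$, since the ``if'' direction also requires the affine line $p+\RR v$ to meet $\Lambda\otimes\QQ$. This does not damage your argument, which only uses the contrapositive of the ``only if'' direction, but one should either fix the statement or phrase it via torsion packets as in Proposition~\ref{prop:mm-torsion-packet}, where the basepoint $q$ may be taken on the edge $e$ itself.
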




Say a metric graph $\Gamma$ 
satisfies the {\em (generalized) Manin--Mumford condition
in degree $d$}
if the image of the degree $d$ Abel--Jacobi map
\begin{align*}
\Abeljh{D}{d} : \Gamma^{d} &\to \Jac(\Gamma) \\
(p_1,\ldots,p_d) &\mapsto [p_1 + \cdots + p_d - D]
\end{align*}
intersects only finitely many torsion points of $\Jac(\Gamma)$, 
for every choice of effective, degree $d$ divisor class $[D]$. 

When $d = 1$ this is 
the Manin--Mumford condition on $\Gamma$.
If the generalized Manin--Mumford condition holds in degree $d$, then it also holds in degree $d'$ for any $1\leq d' \leq d$.
When $d \geq g $ and $ g \geq 1$, the generalized Manin--Mumford condition cannot hold, 
since the higher Abel--Jacobi map will be surjective
and $\Jtors{\Gamma}$ is infinite.

The following result is a higher-degree analogue of Theorem~\ref{thm:intro-conditional-mm}.
\begin{thm}[Conditional uniform Manin--Mumford bound in higher degree]
\label{thm:intro-conditional-higher}
Let $\Gamma$ be a  connected metric graph of genus $g\geq 1$.
If $\Gamma$ satisfies the Manin--Mumford condition in degree $d$,
 then 
\[ \#(\Abeljh{D}{d}(\Gamma)\cap \Jtors{\Gamma}) \leq  \binom{3g-3}{d} . \qedhere\]
\end{thm}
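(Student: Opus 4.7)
\emph{Plan.} The approach extends the proof of Theorem~\ref{thm:intro-conditional-mm} from $\Gamma$ to its $d$-th symmetric product. Factor $\Abeljh{D}{d}$ through the quotient $\Sym^d(\Gamma) = \Gamma^d/S_d$; since the image depends only on unordered divisor classes, it suffices to bound torsion points in $\Abeljh{D}{d}(\Sym^d(\Gamma))$. Fix a trivalent model of $\Gamma$ with exactly $3g-3$ edges and equip $\Sym^d(\Gamma)$ with its natural polyhedral decomposition whose top-dimensional cells $C_\sigma$ are indexed by unordered $d$-element multisubsets $\sigma$ of edges: the cell $C_\sigma$ parametrizes effective degree-$d$ divisors whose edge-support pattern matches $\sigma$. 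On each such cell, $\Abeljh{D}{d}$ is affine-linear in the natural edge-length coordinates.

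The argument proceeds in two steps. \emph{Step 1.} For each \emph{squarefree} multisubset $\sigma$ (equivalently, a $d$-subset of edges), show that $\Abeljh{D}{d}|_{C_\sigma}$ contributes at most one torsion point to the image. This generalizes the per-edge bound embedded in the proof of Theorem~\ref{thm:intro-conditional-mm}: if two torsion points arose in the image of a single cell, interpolating between their preimages would produce a rational linear direction in the $d$-dimensional affine image, hence infinitely many nearby torsion points, violating the Manin--Mumford hypothesis in degree $d$. \emph{Step 2.} Show that every torsion point in the image is already realized by some squarefree cell. The key local move is the tropical linear equivalence $2p \sim (p+t) + (p-t)$ for $p$ interior to an edge and small $t > 0$, combined with sliding support points along spanning-tree structures to distribute support onto previously unused edges. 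Combining the two steps, the total count is at most $\binom{3g-3}{d}$, the number of $d$-subsets of the $3g-3$ edges.

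\emph{Main obstacle.} Step 2 is the delicate part of the argument: one must rule out ``trapped'' torsion classes whose entire linear system is confined to non-squarefree cells, for instance a class all of whose effective representatives are supported on a single edge. A robust fallback is an induction on $d$ using Theorem~\ref{thm:intro-conditional-mm} as base case together with the downward inheritance of the Manin--Mumford condition in degree already noted in the introduction, reducing the problem to controlling how many new torsion points appear under the addition map $\Sym^{d-1}(\Gamma) \times \Gamma \to \Sym^d(\Gamma)$ and then compensating for overcounting by the symmetry of the $d$ factors, which is what converts the degree-one bound $3g-3$ into the binomial coefficient $\binom{3g-3}{d}$.
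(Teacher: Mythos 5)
Your Step 1 is essentially the paper's Lemma~\ref{lem:infinite-torsion}: the cells $\Eff(e_1,\ldots,e_d)$ are convex, a segment joining two torsion-equivalent classes stays in the cell, and rational affine combinations along it remain torsion-equivalent, giving an infinite packet. That part of the proposal is sound and matches the paper's argument.

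The gap is in Step~2, and it is not a cosmetic one. The claim that every torsion point in the image is realized by a squarefree (i.e.\ distinct-edge) cell is equivalent to saying that every effective degree-$d$ divisor class on $\Gamma$ has a representative supported with one point on each of $d$ distinct edges forming an independent set in the cographic matroid. This is exactly the semibreak-divisor theorem of Gross, Shokrieh, and T\'{o}thm\'{e}r\'{e}sz (\cite{GST}, cited as Theorem~\ref{thm:abks-deg-d} in the paper), and it is a genuine theorem, not a lemma one can dispatch with local moves. Your proposed local move $2p \sim (p+t) + (p-t)$ is a valid linear equivalence for $p$ interior to an edge and $t$ small, but it keeps both support points on the same edge and so does not redistribute support across edges; in particular it cannot pull a class like $[2p]$ off the diagonal cell. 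The ``sliding along spanning trees'' suggestion is not a proof, and your fallback induction on $d$ via the addition map $\Sym^{d-1}(\Gamma)\times\Gamma\to\Sym^d(\Gamma)$ does not naturally give the binomial coefficient $\binom{3g-3}{d}$; a naive product estimate would give something closer to $(3g-3)^d$ or a multiset count, and the ``compensating for overcounting by symmetry'' step is not justified. You need to invoke the semibreak result (or reprove it) to get the stated bound. Separately, you should pass to the stabilization first, as the paper does, rather than assume $\Gamma$ has a trivalent model with exactly $3g-3$ edges; a general $\Gamma$ need not, but $\#E\leq 3g-3$ after stabilization always holds.
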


Recall that the {\em girth} of a graph is the minimal number of edges in a cycle;
this number provides a constraint on $d$ such that the degree $d$ Manin--Mumford condition holds.
Note that if $G$ has genus $\geq 2$ and girth $1$, i.e. $G$ has a loop edge,
then $G$ cannot be biconnected.
\begin{obs}
\label{obs:intro-mm-girth}
Let $G$ be a finite connected graph
with girth $\gamma$.
Then for any choice of edge lengths
the metric graph $\Gamma = (G,\ell)$ does not satisfy
the generalized Manin--Mumford condition in degree
$d\geq \gamma$.
\end{obs}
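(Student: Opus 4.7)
The plan is to exhibit a positive-dimensional subtorus of $\Jac(\Gamma)$ inside the image of $\Abeljh{D}{d}$ for some effective $D$ of degree $d$; any such subtorus contains infinitely many torsion points of $\Jac(\Gamma)$, which immediately rules out the Manin--Mumford property in degree~$d$.

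Concretely, let $C$ be a shortest cycle in $G$, which is simple of combinatorial length~$\gamma$, with consecutive vertices $v_1,\ldots,v_\gamma$ and edges $e_i$ of length $\ell_i$ joining $v_i$ to $v_{i+1}$ (indices modulo $\gamma$).  I would consider the $1$-parameter family of configurations $(x_1(t),\ldots,x_\gamma(t))$, $t\in[0,1]$, where $x_i(t)$ is the point on $e_i$ at arc-distance $t\ell_i$ from $v_i$ measured in the direction of~$v_{i+1}$.  Then $x_i(0)=v_i$ and $x_i(1)=v_{i+1}$, so the divisor $D_t = \sum_i x_i(t)$ satisfies $D_0 = D_1 = v_1+\cdots+v_\gamma$.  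Setting $D = v_1+\cdots+v_\gamma$ (padded with any fixed points $q_{\gamma+1},\ldots,q_d$ when $d>\gamma$), the assignment $t \mapsto \Abeljh{D}{d}(x_1(t),\ldots,x_\gamma(t),q_{\gamma+1},\ldots,q_d)$ is then a closed loop in $\Jac(\Gamma)$ whose image lies inside $\Abeljh{D}{d}(\Gamma^d)$.

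The key computation is that this loop is a geodesic straight line in the direction of the class $[C] \in H_1(\Gamma,\ZZ)$. For $t\in(0,1)$ each $x_i(t)$ lies in the interior of $e_i$, moving with velocity $\ell_i$ in the cycle direction, so the derivative of $t\mapsto [x_i(t)-v_i]\in\Jac(\Gamma)$ is the functional sending a harmonic $1$-form $\omega$ to $\ell_i\,\omega(\hat e_i)$. Summing over $i$ gives the $t$-independent velocity $\omega\mapsto\sum_i \ell_i\,\omega(\hat e_i) = \oint_C \omega$, which is the period functional associated to the cycle class $[C]$. Hence in the universal cover the trajectory is the straight segment from $0$ to~$[C]$, which descends to the subtorus $\RR\cdot[C]/\ZZ\cdot[C] \subset \Jac(\Gamma)$. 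Since $C$ is a simple cycle, $[C]$ is primitive in $H_1(\Gamma,\ZZ)$, so this subtorus is intrinsically isomorphic to $\RR/\ZZ$ and contains infinitely many torsion points of $\Jac(\Gamma)$.

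The one subtle point in the plan is the derivative computation: one must verify that on each edge of~$C$ the Abel--Jacobi velocity is indeed the expected evaluation functional, and that these velocities sum to the period functional $\oint_C$ rather than to some direction that could happen to lie in the annihilator of the space of harmonic $1$-forms and so vanish in $\Jac(\Gamma)$.  Once that identification is made, everything else is formal bookkeeping, and the construction works uniformly for every choice of edge lengths.
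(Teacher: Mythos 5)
Your proof is correct, but it takes a genuinely different route from the paper's. The paper's proof (Proposition~\ref{prop:cycle-torsion}) exhibits two explicit effective divisors $D = v_1 + \cdots + v_\gamma$ (the cycle vertices) and $E = x_1 + \cdots + x_\gamma$ (the edge midpoints), constructs a piecewise linear function $f$ with $\Divisor(f) = D - E$ whose slopes are $\{0, \pm\tfrac12\}$, concludes via Lemma~\ref{lem:d-torsion-slope} (torsion equivalence $\Leftrightarrow$ rational slopes) that $[D - E]$ is a nonzero torsion class, and then applies Lemma~\ref{lem:infinite-torsion} (which uses convexity of the cell $\Eff(e_1,\ldots,e_\gamma)$) to conclude the torsion packet is infinite. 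You instead show directly that the Abel--Jacobi image contains the primitive circle subgroup $\RR[C]/\ZZ[C] \subset \Jac(\Gamma)$ and invoke the fact that a positive-dimensional subtorus contains infinitely many torsion points. Your computation that the velocity of the moving configuration equals the period functional $\oint_C$ is correct, and your worry about the velocity lying in the annihilator of $\Omega(\Gamma)$ is resolved exactly as you say: the period pairing between $H_1(\Gamma,\ZZ)$ and the harmonic forms is perfect, and $[C] \neq 0$ for a simple cycle. Both proofs handle $d > \gamma$ by the same device of padding with fixed points (equivalently, the remark in the introduction that the degree-$d$ Manin--Mumford condition implies the degree-$d'$ condition for $d' \leq d$).

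The tradeoff is that your argument is more geometric and structurally transparent --- it makes visible that an abelian subvariety of $\Jac(\Gamma)$ sits inside $W_d(\Gamma)$, which is exactly the phenomenon obstructing Manin--Mumford for algebraic curves --- while the paper's slope/divisor argument generalizes more readily. In particular, the strengthening to independent girth (Proposition~\ref{prop:gamind-cycle-torsion}) reuses Lemma~\ref{lem:infinite-torsion} and the cell structure of $\Eff^d(\Gamma)$; your period computation does not directly handle the case where the cycle's edge set has cographic rank strictly smaller than its cardinality, because then the loop you construct is the image of a $\gamma$-parameter family under a rank-$\gamind$ map and extra work is needed to identify a lower-degree infinite torsion packet.
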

The bound in Observation~\ref{obs:intro-mm-girth} is not sharp, and can be strengthened.
We define the {\em independent girth} $\gamind$
of a graph as
\[
	\gamind(G) = \min_{C} \Big( \# E(C) + 1 - h_0(G\backslash E(C)) \Big)
\]
where the minimum is taken over all  cycles $C$ in $G$,
and $h_0$ denotes the number of connected components.
Since the (usual) girth is by definition
${\gamma(G) = \min_C (\# E(C)) }$
and $h_0 \geq 1$,
we have the inequality $\gamind \leq \gamma$.
In contrast with girth, the independent girth is invariant under subdivision of edges, so it is well-defined for a metric graph.
The independent girth may be expressed in terms of the cographic matroid of $G$, see Section~\ref{subsec:matroids}.

Say a graph $G$ is {\em Manin--Mumford finite in degree $d$}
if for very general edge lengths $\ell$,
the metric graph $(G,\ell)$ satisfies the degree $d$ Manin--Mumford condition.
\begin{thm}[Generic tropical Manin--Mumford in higher degree]
\label{thm:intro-mm-higher-degree}
Let $G$ be a finite connected graph of genus $g\geq 1$
with independent girth $\gamind$.
Then $G$ is Manin--Mumford finite in degree $d$
if and only if ${ 1 \leq d < \gamind }$.
%
\end{thm}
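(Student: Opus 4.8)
The plan is to analyze $\iota_D^{(d)}$ cell by cell on the polyhedral complex $\Sym^d\Gamma$, reduce the finiteness of torsion points on each cell to a lattice condition in $H_1(\Gamma,\RR)$, and pin down the threshold $\gamind$ by a matroid computation. Restricted to an edge $e$, the tropical Abel--Jacobi map is affine with constant slope proportional to the $\ell$-orthogonal projection $\nu_e$ of the edge vector $[e]\in C_1(\Gamma,\RR)$ onto the cycle space $H_1(\Gamma,\RR)$ (the metric description of $\iota$). Hence for a multiset $S$ of $d$ edges, $\iota_D^{(d)}$ carries the associated cell of $\Sym^d\Gamma$ onto a polytope $Q_S$ which is full-dimensional in an affine subspace $A_S\subseteq H_1(\Gamma,\RR)$ whose direction is the subspace $W_S$ spanned by $\{\nu_e:e\in S\}$. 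Since the torsion subgroup of $\Jac(\Gamma)$ is $H_1(\Gamma,\QQ)/H_1(\Gamma,\ZZ)$, two rational points of $A_S$ differ by a rational vector of $W_S$; so whenever $W_S$ contains no nonzero lattice vector, the cell meets the torsion in at most one point, \emph{regardless of $D$}. As $\Sym^d\Gamma$ has finitely many cells, any $\ell$ with $W_S\cap H_1(\Gamma,\ZZ)=0$ for every $d$-edge multiset $S$ gives a metric graph $(G,\ell)$ satisfying the degree-$d$ Manin--Mumford condition.

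The crux is a genericity statement: \emph{for very general $\ell$, $W_S$ contains a nonzero lattice vector if and only if the cographic closure $\operatorname{cl}_{M^*(G)}(S)$ contains a cycle of $G$} (equivalently, $\operatorname{cl}_{M^*(G)}(S)$ is dependent in the graphic matroid $M(G)$). Writing $B$ for the cut space and $D_\ell$ for the diagonal scaling by edge lengths, a short computation gives $\gamma\in W_S$ iff $D_\ell\gamma\in\langle[e]:e\in S\rangle+B$. If some cycle $C$ has $E(C)\subseteq\operatorname{cl}_{M^*(G)}(S)$, then each $[f]$ with $f\in E(C)$ lies in $\langle[e]:e\in S\rangle+B$, hence so does $D_\ell[C]$, giving $[C]\in W_S$ for \emph{every} $\ell$. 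Conversely, for a fixed nonzero $\gamma\in H_1(\Gamma,\ZZ)$ the locus $\{\ell:\gamma\in W_S\}$ is cut out by linear equations in $\ell$, which are either all trivial --- exactly when $\Supp(\gamma)\subseteq\operatorname{cl}_{M^*(G)}(S)$ --- or define a set of positive codimension; summing over the countably many $\gamma$, the locus where $W_S$ meets the lattice is negligible unless $\operatorname{cl}_{M^*(G)}(S)$ already contains a cycle.

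Next comes the matroid identity. For connected $G$, $\operatorname{rank}_{M^*(G)}(T)=|T|-h_0(G\setminus T)+1$, so the definition of independent girth rewrites as $\gamind(G)=\min_C\operatorname{rank}_{M^*(G)}(E(C))$ over cycles $C$; and a closure argument identifies this with $\min\{|S|:\operatorname{cl}_{M^*(G)}(S)\text{ is dependent in }M(G)\}$ --- a minimal such $S$ can be taken inside a cycle $C$ as a cographic basis of $E(C)$, while conversely $|S|\ge\operatorname{rank}_{M^*(G)}(S)\ge\operatorname{rank}_{M^*(G)}(E(C))\ge\gamind(G)$ whenever $E(C)\subseteq\operatorname{cl}_{M^*(G)}(S)$.

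Finally I would assemble the two directions. If $1\le d<\gamind$, then no $d$-edge multiset $S$ has $\operatorname{cl}_{M^*(G)}(S)$ dependent in $M(G)$; so, by the genericity statement applied to each of the finitely many cells, for very general $\ell$ every $W_S$ meets $H_1(\Gamma,\ZZ)$ trivially, and $(G,\ell)$ satisfies the degree-$d$ Manin--Mumford condition --- hence $G$ is Manin--Mumford finite in degree $d$. If $d\ge\gamind$, pick a cycle $C$ achieving the minimum, a cographic basis $S_0\subseteq E(C)$ of size $\gamind$, and pad it to a multiset $S$ of size $d$; then $[C]\in W_S$ for all $\ell$, and choosing $D$ to be a suitable sum of endpoints of the edges of $S$ one finds a segment $t\mapsto t[C]$ in the image of that single cell, carrying infinitely many torsion points for \emph{every} $\ell$ --- so $G$ is not Manin--Mumford finite in degree $d$ (strengthening Observation~\ref{obs:intro-mm-girth}). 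The main obstacle is the genericity statement: it needs the correct identification of the slopes $\nu_e$ as $\ell$-harmonic projections, and then a careful argument that each ``bad'' parameter locus is either forced by the cographic matroid or has positive codimension; by contrast the per-cell torus bookkeeping and the matroid identity are routine, and once finiteness is established Theorem~\ref{thm:intro-conditional-higher} supplies the uniform bound $\binom{3g-3}{d}$.
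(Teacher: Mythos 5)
Your proposal is correct, but it takes a genuinely different route from the paper. The paper reduces torsion equivalence to rationality of slopes of potential functions (its Lemma~\ref{lem:d-torsion-slope}), proves the positive direction via Proposition~\ref{prop:acyclic-nontorsion} --- restricting the potential to the complement of the cell's edges, analyzing current-active and voltage-active edge sets via cuts and cycles, locating a non-bridge edge outside the cographic closure whose slope is a nonconstant rational function of $\ell$ by Kirchhoff's formulas, and intersecting over countably many rational coefficient tuples --- and proves the negative direction by an explicit half-integer-slope function on a cycle (vertices versus midpoints) combined with convexity of cells (Lemma~\ref{lem:infinite-torsion}) and a face decomposition of the $d$-dimensional cell (Proposition~\ref{prop:gamind-cycle-torsion}). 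You instead work directly in $H_1(\Gamma,\RR)$ with the lattice $H_1(\Gamma,\ZZ)$: per-cell direction spaces $W_S$ spanned by the $\ell$-orthogonal projections $\nu_e$, the observation that a cell meets $\Jtors{\Gamma}$ in at most one point when $W_S$ misses the lattice (uniformly in the basepoint class $[D]$), and a genericity statement whose bad loci are cut out by conditions \emph{linear} in $\ell$, trivial exactly when the cographic closure of $S$ contains a cycle; the negative direction follows for \emph{every} $\ell$ from the invariant lattice vector $[C]\in W_S$. Your route buys a cleaner genericity locus (a countable union of proper linear subspaces rather than loci where Kirchhoff ratios are rational), bypasses both Kirchhoff's formulas and the semibreak-divisor covering of $\Eff^d(\Gamma)$ (Theorem~\ref{thm:abks-deg-d}, which the paper needs here and for the sharper count), and treats both directions through the single object $W_S$; the paper's potential-theoretic lemmas, by contrast, are reused for the degree-one theorem and the uniform bounds, and its Theorem~\ref{thm:cographic-dim} (the $\nu_e$ realize $M^\perp(G)$, via \cite{CV}) is exactly the bridge your computation re-derives. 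Two points you should flesh out, though neither is a gap: in the converse of your genericity claim you implicitly use that the support of a nonzero integral circulation contains a graph cycle (so triviality of all linear conditions forces a cycle inside the closure), and in the $d\geq\gamind$ direction the segment $t\mapsto t[C]$ lies in the cell only after choosing, for each $e\in S_0$, the endpoint of $e$ in $D$ according to the sign of the coefficient $c_e$ in $[C]=\sum_{e\in S_0}c_e\nu_e$, so that small positive $t$ keeps every coordinate inside $[0,\ell(e)]$; the paper avoids this bookkeeping with its explicit midpoint construction.
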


\begin{rmk}
In Theorem~\ref{thm:intro-manin-mumford}, the conclusion holds for a larger family of graphs than just biconnected graphs: it suffices that each biconnected component has genus $2$ or more.
This can be seen as a special case of Theorem~\ref{thm:intro-mm-higher-degree} since having independent girth $\gamind = 1$ is equivalent to having a biconnected component with genus one.
\end{rmk}

The main results in this paper are proved using the following ingredients: (1) we show that torsion equivalence between divisors can be reduced to a condition on slopes of (sums of) potential functions $j^{y}_{z}$,
and (2) Kirchhoff's Theorem (Theorem~\ref{thm:kirchhoff}) expresses these slopes as rational functions of the edge lengths.

\subsection{Previous work}

Faltings's theorem (previously Mordell's conjecture) states that 
a smooth curve of genus $g \geq 2$ 
has finitely many rational points,
i.e. points whose coordinates are all rational numbers.
By analogy with Mordell's conjecture,
Manin and  Mumford conjectured that
a smooth algebraic curve of genus $g \geq 2$
has finitely many torsion points.
The Manin--Mumford conjecture was proved by Raynaud \cite{Ray},
which inspired several generalizations concerning torsion points
in abelian varieties.
Soon after Raynaud's work different proofs were given by
Coleman, Hindry, Buium, and others.
For an informative survey of this history, see Tzermias~\cite{Tze} and Poonen~\cite{Poo}.
For an introduction to this area of research that also provides background on abelian varieties, see Hindry~\cite{Hin}.

After Raynaud's work, it was still unknown whether the number of torsion points on a genus $g$ curve could be bounded as a function of $g$;
this became known as the 
{\em uniform Manin--Mumford conjecture}.
Baker and Poonen~\cite{BP} extended Raynaud's result quantitatively
by proving strong bounds on the number of torsion points that arise on a given curve as the basepoint for the Abel--Jacobi map varies.
In particular, they showed that a curve $X$ of genus $g\geq 2$
has finitely many choices of basepoint $q$ so that
$\#(\Abelj{q}(X) \cap \Jtors{X})$
has size greater than two.
Katz, Rabinoff, and Zureick-Brown 
\cite{KRZB}
used tropical methods to prove a uniform bound on the number of torsion points on an algebraic curve of fixed genus,
which satisfy an additional technical constraint on the reduction type.
Very recently, K\"uhne~\cite{Kuh} (in characteristic zero) and Looper, Silverman, and Wilms~\cite{LSW}  (in positive characteristic) have found proofs of uniform bounds on the number of torsion points on an algebraic curve.
K\"uhne's result combined with recent work of Dimitrov, Gao, and Habegger~\cite{DGH} is sufficient to prove the uniform Mordell--Lang conjecture;
in this variant of uniform Manin--Mumford, roughly, ``torsion point'' is replaced with ``point in an arbitrary finite-rank subgroup,'' where the bound depends on the rank of the subgroup.
These recent developments are surveyed by Gao~\cite{Gao}.

Regarding the higher-degree Manin--Mumford conjecture,
 Abramovich and Harris \cite{AH} studied the question of when the locus $W_d(X)$ of effective degree $d$ divisor classes on an algebraic curve $X$ contains an abelian subvariety of $\Jac(X)$.
This question was studied further by Debarre and Fahlaoui \cite{DF}.
In~\cite{Hin}, Hindry remarks that ``a geometric description ... of the condition `$W_r$ does not contain a translate of a nonzero abelian subvariety' ... does not seem to be known in general.''
It would be interesting to investigate 
whether 
the results presented here on the higher-degree tropical Manin--Mumford conjecture can lead to progress on the analogue for algebraic curves.
For an overview of the basic theory of tropical geometry and its relation to arithmetic geometry, including recent applications of this framework, see the survey of Baker and Jensen~\cite{BJ}, in particular Section 11.2.

\subsection{Notation}
Here we collect some notation which will be used throughout the paper.

\begin{tabular}{ll}
$\Gamma$ & a compact, connected metric graph \\

$(G,\ell)$ &
a combinatorial model for a metric graph,
where  
$\ell : E(G) \to \RR_{>0}$ \\
& is a length function on edges of $G$ \\

$G$ &
a finite, connected combinatorial graph
(loops and parallel edges allowed) \\

$E(G)$ &
edge set of $G$ \\

$V(G)$ &
vertex set of $G$ \\

$\trees(G)$ & set of spanning trees of $G$ \\

$\cycles(G)$ & set of cycles of $G$ \\

$D$ & a divisor on a metric graph \\

$\PL_\RR(\Gamma)$ & set of piecewise linear functions on $\Gamma$ \\

$\PL_\ZZ(\Gamma)$ & set of piecewise $\ZZ$-linear functions on $\Gamma$ \\

$\Divisor(f)$ & the principal divisor associated to a piecewise ($\ZZ$-)linear function $f$ \\

$\Div(\Gamma)$ & divisors (with $\ZZ$-coefficients) on $\Gamma$ \\


$\Div^d(\Gamma)$ & divisors of degree $d$ on $\Gamma$ \\

$[D]$ &
the set of divisors linearly equivalent to $D$ \\
& (i.e. the linear equivalence class of the divisor $D$) \\

$|D|$ &
the set of effective divisors linearly equivalent to $D$ \\



$\Pic^d(\Gamma)$ & divisor classes of degree $d$ on $\Gamma$ \\

$\Eff^d(\Gamma)$ & effective divisor classes of degree $d$ on $\Gamma$ \\

$\Jac(\Gamma)$ &
the Jacobian of $\Gamma$,
$\Jac(\Gamma) = \Div^0(\Gamma) / \Divisor(\PL_\ZZ(\Gamma)) $ \\

\end{tabular}

\section{Graphs and matroids}
Throughout this paper, 
we use ``graph'' to mean ``oriented graph,''
i.e. each graph is equipped with an orientation on its edges. 
The orientation is auxiliary in the sense that the results will not depend on the orientation chosen.

A {\em graph} $G = (V,E)$
consists of a finite set of vertices $V = V(G)$ and a finite set of edges $E = E(G)$,
equipped with two maps $\mathrm{head}: E\to V$ and $\mathrm{tail} : E \to V$,
which we abbreviate by $e^+ = \mathrm{head}(e)$ and $e^- = \mathrm{tail}(e)$.
We say an edge $e$ lies between vertices $v,w$ if $(e^+,e^-) = (v,w)$ or $(e^+,e^-) = (w,v)$.
We allow loops (i.e. $e$ such that $e^+ = e^-$) and multiple edges (i.e. distinct edges can share the same head and tail).
The {\em valence} of a vertex $v \in V$ is 
\[
\val(v) = \#\{ e \in E : e^+ = v\} + \#\{e \in E : e^- = v \}.
\]
Note that loop edge at $v$ contributes $2$ to the valence of $v$.

Given a subset of edges $A \subset E$,
we let $G|A$ denote the subgraph of $G$ whose vertex set is $V(G)$ and whose edge set is $A$.
We let $G \setminus A$ denote the subgraph
whose vertex set is $V(G)$ and whose edge set is $E \setminus A$.

For a connected graph $G$, the {\em genus} is defined as
$g(G) = \#E(G) - \#V(G) + 1$.
A {\em cycle} of $G$ is a subgraph which is homeomorphic to a circle.
A {\em spanning tree} of $G$
is a subgraph $T$ with the same vertex set and whose edge set is a subset of the edges of $G$,
such that $T$ is connected and has no cycles.
The number of edges in any spanning tree of $G$ is 
$\#V(G) - 1 = \#E(G) - g(G)$.

\subsection{Critical group}
\label{subsec:critical-group}
Fix a graph $G = (V,E)$  with $n$ vertices, 
enumerated as $\{v_1, \ldots, v_n\}$.
The {\em Laplacian matrix} of $G$ is the $n\times n$ matrix $L$
whose entries are
\begin{equation}
\label{eq:laplacian}
L_{ij} = \begin{cases}
- \#(\text{edges between $v_i$ and $v_j$}) &\text{if }i\neq j \\
\sum_{k \neq i} \#(\text{edges  from $v_i$ to $v_k$}) &\text{if }i=j.
\end{cases}
\end{equation}
(Some sources use the opposite sign convention for the graph Laplacian.)
The matrix $L$ is symmetric and its rows and columns sum to zero.
The Laplacian defines a linear map $\ZZ^V \to \ZZ^V$,
whose image has rank $n-1$ if $G$ is connected.
Let $\epsilon: \ZZ^V \to \ZZ$ denote the linear map
which sums all coordinates.
The image of the Laplacian matrix $L$ lies in the kernel of $\epsilon$.

The {\em critical group} $\Jac(G)$ of a connected graph $G$
is the abelian group 
defined as 
\[
\Jac(G) = \ker(\epsilon) / \im(L) .
\]
The critical group is finite, and its cardinality is the number of spanning trees of $G$ \cite[Theorem 6.2]{Big}.
For more on the critical group, see \cite{BN,Big}
and the references therein.

\begin{eg}
Let $G$ be the ``ice cream cone'' graph,
shown in Figure~\ref{fig:cone-graph},
which has three vertices 
and four edges.
\begin{figure}[h]
\centering
\begin{tikzpicture}
	\coordinate (U) at (-1,1.5);
	\coordinate (V) at (1,1.5);
	\coordinate (W) at (0,0);
	
	\draw (U) node[circle,fill,scale=0.5] {} -- 
		  (W) node[circle,fill,scale=0.5] {} -- 
		  (V) node[circle,fill,scale=0.5] {} -- cycle;
	\draw[rounded corners=5] (U) -- (-0.5,2) -- (0.5,2) -- (V);
	
	\path (U) node[above left] {$u$};
	\path (V) node[above right] {$v$};
	\path (W) node[right=2pt] {$w$};
\end{tikzpicture}
\caption{Ice cream cone graph.}
\label{fig:cone-graph}
\end{figure}
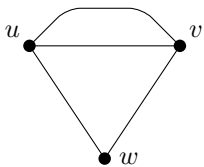
The Laplacian matrix of this graph is 
$
\begin{pmatrix}
3 & -2 & -1 \\
-2 & 3 & -1 \\
-1 & -1 & 2
\end{pmatrix}
$,
and $\Jac(G) \cong \ZZ / 5\ZZ$.
\end{eg}

We now give additional notation for constructing $\Jac(G)$,
which connects the critical group to the Jacobian of a metric graph (see Section~\ref{subsec:int-laplacian}).
Let the {\em divisor group} $\Div(G)$ be the free abelian group on the set of vertices of $G$;
\[
\Div(G) = \{ \sum_{v_i \in V} a_i v_i : a_i\in \ZZ\} ,
\]
and let $\PL_\ZZ(G)$ denote the set of integer-valued functions on vertices of $G$;
\[
\PL_\ZZ(G) = \{ f : V \to \ZZ\} .
\]
(The notation $\PL$ is for {\em piecewise linear};
any function $f : V\to \ZZ$ has a unique linear interpolation along the edges.)
The functions $\PL_\ZZ(G)$ also form a free abelian group.
The set of indicator functions $\{ \mathbf{1}_{v_i} : v_i \in V\}$
forms a basis for $\PL_\ZZ(G)$
where
\[
\mathbf{1}_{v_i}(w) = \begin{cases}
1 &\text{if }w = v_i ,\\
0 &\text{if }w \neq v_i.
\end{cases}
\]
The divisor group by definition has the basis $\{ v_i : v_i\in V\}$.
Let $\Divisor: \PL_\ZZ(G) \to \Div(G)$
be the linear map defined by sending a function $f$ to $- \sum_{v} a_v(f) v$, where $a_v$ is the ``sum of the outgoing slopes'' of $f$ at $v$.
In coordinates,
using the two bases defined above,
the matrix of $\Divisor$ is precisely the Laplacian matrix \eqref{eq:laplacian}.

The degree map $\deg :\Div(G) \to \ZZ$ is defined by $\deg(\sum_i a_i v_i) = \sum_i a_i$.
Let $\Div^0(G)$ denote the kernel of the degree map.
The critical group of $G$ is
\[
\Jac(G) = \Div^0(G) / \Divisor( \PL_\ZZ(G)) .
\]

\begin{rmk}
Both $\Div(G)$ and $\PL_\ZZ(G)$ are isomorphic to $\ZZ^V$, but they are not canonically isomorphic.
$\Div(G)$ is covariant with respect to $V(G_1) \to V(G_2)$, viewed as sets,
while $\PL_\ZZ(G)$ is contravariant.

(Given $f: V(G_1) \to V(G_2)$ on vertex sets there is an induced covariant map $f_*: \Div(G_1) \to \Div(G_2)$ and an induced contravariant map $f^*: \PL_\ZZ(G_2) \to \PL_\ZZ(G_1)$.)
\end{rmk}

\subsection{Metric graphs}
A {\em metric graph} is a compact, connected metric space which comes from 
assigning the path metric to 
a finite, connected graph whose edges are identified with closed intervals
$[0, \ell(e)]$
with positive, real lengths $\ell(e) > 0$.
If the metric graph $\Gamma$
comes from a combinatorial graph $G$ by 
assigning edge lengths $\ell : E(G) \to \RR_{>0}$,
we say $(G,\ell)$ is a {\em combinatorial model} for $\Gamma$
and we write $\Gamma = (G,\ell)$.
A single metric graph generally has many different combinatorial models.

The {\em genus} of a metric graph $\Gamma$ is the dimension of the first homology space,
\[
g(\Gamma) = \dim H_1(\Gamma, \RR).
\]
Given a combinatorial model $\Gamma = (G,\ell)$,
the genus agrees with the underlying graph,
\[
g(\Gamma) = g(G) = \#E(G) - \#V(G) + 1.
\]

The {\em valence} of a point $x$ on a metric graph $\Gamma$,
denoted $\val(x)$,
is the number of connected components in a sufficiently small punctured neighborhood of $x$. 
If $(G,\ell)$ is a model for $\Gamma$, then for any 
$v \in V(G)$ the valence $\val(v)$ as a vertex of $G$ agrees with $\val(v)$
as a point in the metric graph $\Gamma$.

\subsection{Stabilization}
\label{subsec:stabilization}
The notion of stability is useful for our purposes
because questions about Abel--Jacobi maps 
$\Abelj{}: \Gamma \to \Jac(\Gamma)$
may be reduced to 
$\Abelj{}: \Gamma' \to \Jac(\Gamma')$
where $\Gamma'$ is a semistable metric graph. 
(See Section~\ref{subsec:int-laplacian} for discussion of the Abel--Jacobi map.)

A connected 
graph $G$ is {\em stable} if every vertex $v \in V(G)$ has valence at least $3$,
and {\em semistable} if every vertex has $\val(v) \geq 2$.
A metric graph $\Gamma$ is {\em semistable} if every point $x\in \Gamma$ has valence at least $2$.
Equivalently, $\Gamma$ is semistable if it has a model $(G,\ell)$ where $G$ is a  semistable graph.
We say $(G,\ell)$ is a (semi)stable model for $\Gamma$ if $G$ is (semi)stable.

\begin{prop}
A semistable metric graph $\Gamma$ with genus $g\geq 2$ has a unique stable model $(G,\ell)$.
\end{prop}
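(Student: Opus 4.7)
The plan is to show that the stable model of a semistable $\Gamma$ with $g\geq 2$ is intrinsic, with vertex set determined by the topological valence. Set
\[
V^* = \{ x \in \Gamma : \val(x) \geq 3 \}.
\]
For any combinatorial model $(G,\ell)$ of $\Gamma$, every point outside $V(G)$ has valence $2$, so $V^* \subseteq V(G)$; in particular $V^*$ is finite.

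First I would verify that $V^* \neq \emptyset$ under the genus hypothesis. If $V^*$ were empty, every point of $\Gamma$ would have valence exactly $2$ (using semistability to rule out valence $\leq 1$), so $\Gamma$ would be locally homeomorphic to $\RR$ everywhere, i.e. a compact connected topological $1$-manifold, hence homeomorphic to a circle. But a circle has genus $1$, contradicting $g\geq 2$.

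Next I would build the candidate stable model. The open set $\Gamma \setminus V^*$ has finitely many connected components. A compact component would be a circle disjoint from $V^*$, hence clopen in $\Gamma$, which by connectedness of $\Gamma$ would force $\Gamma$ itself to be that circle, contradicting $V^*\neq\emptyset$. So every component is homeomorphic to an open interval, has limit endpoints in $V^*$ (possibly coinciding, producing a loop), and carries a finite length inherited from the metric. Taking $V(G) = V^*$, letting $E(G)$ be the set of components of $\Gamma \setminus V^*$, and setting $\ell(e)$ equal to the length of the component gives a combinatorial model; this is stable since each vertex has topological (and therefore combinatorial) valence $\geq 3$.

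Uniqueness then follows by running the same identification in reverse. Given any stable model $(G',\ell')$, combinatorial valence at each vertex equals topological valence, so $V(G') \subseteq V^*$; conversely, a point of topological valence $\geq 3$ cannot lie in the interior of an edge of $G'$, so $V^* \subseteq V(G')$. Hence $V(G') = V^*$, after which the edges (as closures of components of $\Gamma \setminus V^*$) and their lengths are forced to coincide with the model constructed above. I expect the main obstacle to lie in the two topological inputs—ruling out $V^* = \emptyset$ in genus $\geq 2$ and excluding a circular component of $\Gamma\setminus V^*$—after which the combinatorial assembly, including the correct bookkeeping for loops and parallel edges, is routine.
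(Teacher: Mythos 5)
Your proof is correct and follows essentially the same approach as the paper's (very terse) proof: take $V(G)$ to be the points of valence $\geq 3$ and let edges be the components of the complement. You have simply filled in the details the paper leaves implicit — the nonemptiness of $V^*$ in genus $\geq 2$, the exclusion of a circular component, and the uniqueness argument via valence comparison.
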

\begin{proof}
The unique stable model has vertex set
$V(G) = \{ x \in \Gamma : \val(x) \geq 3\}$.
The edges $E(G)$ correspond to connected components of $\Gamma \setminus V(G)$,
which is isometric to a disjoint union of open intervals of finite length.
\end{proof}

\begin{prop}
\label{prop:stable-edge-bound}
Suppose $G$ is a stable graph of genus $g$.
Then the number of edges in $G$ is at most $3g-3$.
\end{prop}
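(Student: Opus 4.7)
The plan is to use the handshake lemma together with the stability hypothesis to get a linear inequality between $\#V(G)$ and $\#E(G)$, then combine with the definition $g = \#E(G) - \#V(G) + 1$.

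First I would invoke the handshake identity $\sum_{v \in V(G)} \val(v) = 2\,\#E(G)$, which counts each edge twice (once per endpoint, with loops counted correctly since our valence convention assigns a loop weight $2$ at its vertex). Since $G$ is stable, every term on the left satisfies $\val(v) \geq 3$, so
\[
3\,\#V(G) \leq \sum_{v\in V(G)} \val(v) = 2\,\#E(G),
\]
which gives $\#V(G) \leq \tfrac{2}{3}\#E(G)$.

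Next I would substitute this into the genus formula. From $g = \#E(G) - \#V(G) + 1$ and $\#V(G) \leq \tfrac{2}{3}\#E(G)$ we obtain
\[
g \geq \#E(G) - \tfrac{2}{3}\#E(G) + 1 = \tfrac{1}{3}\#E(G) + 1,
\]
so rearranging yields $\#E(G) \leq 3(g-1) = 3g - 3$, as desired.

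There is no real obstacle here; the entire content is the handshake lemma plus the stability bound $\val(v)\geq 3$ and the definition of genus. The only subtlety worth a sentence of comment is that the valence convention adopted earlier in the paper (loops contribute $2$) is precisely the one that makes the handshake identity hold as stated, so the inequality applies uniformly whether or not $G$ has loops or multiple edges.
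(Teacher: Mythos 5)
Your proof is correct and follows exactly the same route as the paper: the handshake identity $\sum_v \val(v) = 2\,\#E(G)$ combined with $\val(v)\geq 3$ gives $\#V(G)\leq \tfrac{2}{3}\#E(G)$, which is then substituted into $g = \#E(G)-\#V(G)+1$. The remark about the loop-counting convention making the handshake identity hold is a nice sanity check but is the same observation the paper implicitly relies on.
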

\begin{proof}
Since every vertex has valence at least $3$, we have
\begin{equation*}
  \# V(G) \leq \frac1{3} \sum_{v\in V(G)} \val(v)
 = \frac{2}{3} \cdot \# E(G).
\end{equation*}
By the genus formula $g = \#E(G) - \#V(G) + 1$, this implies
\begin{equation*}
\# E(G) = g - 1+ \#V(G) \leq g - 1 + \frac{2}{3}\cdot \#E(G) 
\end{equation*}
which is equivalent to the desired inequality $\# E(G) \leq 3g-3$.
\end{proof}
It follows from the previous proposition that a stable graph has genus $g\geq 2$.
(We assume stable graphs are connected and nonempty.)
In the following result, recall that a {\em deformation retraction} $r: X \to A$ is a continuous map to a subspace $A \subset X$ that is a homotopy inverse to the inclusion $A \hookrightarrow X$.

\begin{prop}[Metric graph stabilization]
\label{prop:stabilization}
Suppose $\Gamma$ 
has genus $g\geq 1$.
There is a canonical semistable subgraph $\Gamma' \subset \Gamma$
that admits a deformation retraction $r: \Gamma \to \Gamma'$.
\end{prop}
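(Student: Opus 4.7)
The plan is to construct $\Gamma'$ as a combinatorial ``$2$-core'' via iterative leaf removal in any model of $\Gamma$, verify that the result depends only on $\Gamma$, and then build the deformation retraction by collapsing each tree-branch of the complement onto its attachment point.

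First I would fix an arbitrary combinatorial model $(G,\ell)$ of $\Gamma$ and run the following reduction: while $G$ contains a vertex $v$ of valence $1$, delete $v$ together with its unique incident edge. Deleting a leaf decreases both $\#V(G)$ and $\#E(G)$ by $1$, so the genus $g = \#E(G) - \#V(G) + 1$ is preserved, and connectedness is preserved as well. Since $g \geq 1$ the initial graph is not a tree, so the procedure cannot collapse $G$ to a single vertex; it terminates in a nonempty connected subgraph $G'$ of genus $g$ in which every vertex has valence $\geq 2$. Take $\Gamma' \subset \Gamma$ to be the induced metric subgraph. Interior points of edges of $G'$ have valence $2$ and the remaining vertices have valence $\geq 2$ by construction, so $\Gamma'$ is semistable.

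Next I would verify that $\Gamma'$ is canonical. Since any two models of $\Gamma$ admit a common refinement obtained by subdividing edges at interior (valence-$2$) points, and inserting a degree-$2$ vertex has no effect on the leaf-removal procedure, the resulting subset $\Gamma' \subset \Gamma$ is model-independent. The intrinsic picture is that $\Gamma \setminus \Gamma'$ is a disjoint union of \emph{tree-branches}: connected open subsets $T \subset \Gamma$ that are contractible and satisfy $\bar T \setminus T = \{p\}$ for a single point $p \in \Gamma'$, called the attachment point. For each connected component $T_i$ of $\Gamma \setminus \Gamma'$, I would argue that $\bar T_i \setminus T_i$ consists of a single point $p_i \in \Gamma'$: if there were two distinct attachment points $p \neq q$, then a simple path from $p$ to $q$ inside $\bar T_i$ together with a path from $p$ to $q$ inside the connected set $\Gamma'$ would meet only at $\{p,q\}$ (since $T_i \cap \Gamma' = \emptyset$), producing an embedded circle of $\Gamma$ whose interior lies inside $T_i$. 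But leaves never lie on embedded circles, and this property persists under iterated leaf removal, so no point on an embedded circle of $\Gamma$ can belong to $T_i$, a contradiction.

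With the single-attachment-point property in hand, define $r:\Gamma\to\Gamma'$ to be the identity on $\Gamma'$ and to send every $x\in\bar T_i$ to $p_i$. Continuity at $p_i$ follows from $d(x,p_i)\to 0$ as $x\to p_i$ within $\bar T_i$. A homotopy $h_t$ from $\mathrm{id}_\Gamma$ to $r$ is obtained by fixing $\Gamma'$ pointwise and, on each $\bar T_i$, moving each point along the unique geodesic toward $p_i$ at speed proportional to $d(x,p_i)$, so that it reaches $p_i$ at $t=1$; continuity of $h$ at the attachment points is again a consequence of this distance going to zero.

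The main obstacle is the single-attachment-point property, without which the retraction is neither well-defined nor continuous. This is precisely where the genus hypothesis $g\geq 1$ is essential: it ensures $\Gamma'$ is nonempty and connected, which is needed to form the competing path from $p$ to $q$ in the contradiction argument. The remaining ingredients — termination of leaf removal, preservation of genus, and invariance under refinement — are routine finite-combinatorial checks.
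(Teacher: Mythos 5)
Your proposal is correct and follows essentially the same approach as the paper's proof sketch: iteratively remove leaf vertices to obtain a semistable core $\Gamma'$, and observe that the removal is a deformation retraction. You fill in the details the paper leaves as an exercise (model-independence, the single-attachment-point property of the complementary tree-branches, and the explicit homotopy), but the underlying construction is the same.
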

\begin{proof}(Proof sketch)
Call a vertex of valence one a {\em leaf} vertex.
If $\Gamma$ has no leaves, 
then $\Gamma$ is semistable so we take $\Gamma' = \Gamma$. 
Otherwise, choose a leaf vertex 
and contract its incident edge. The result is a metric graph with strictly fewer valence-one vertices, so after a finite number of such operations, we reach a semistable metric graph. We take this result as $\Gamma'$.
At each step, the operation of contracting an edge is a homotopy equivalence.
Thus $\Gamma'$ is homotopy equivalent to $\Gamma$.
We leave the uniqueness of the stabilization $\Gamma'$ as an exercise.
\end{proof}
We call the subgraph $\Gamma'$ of Proposition~\ref{prop:stabilization}
the {\em stabilization} of $\Gamma$,
and denote it as
$\stable(\Gamma)$. 

\begin{eg}
Figure~\ref{fig:stabilization} shows the stabilization $\Gamma'$
of a metric graph $\Gamma$ of genus three.
The deformation retraction $r: \Gamma \to \Gamma'$ 
sends a point of $\Gamma$ to the closest point of $\Gamma'$ in the path metric.
\begin{figure}[h]
\centering
\includegraphics[scale=0.25]{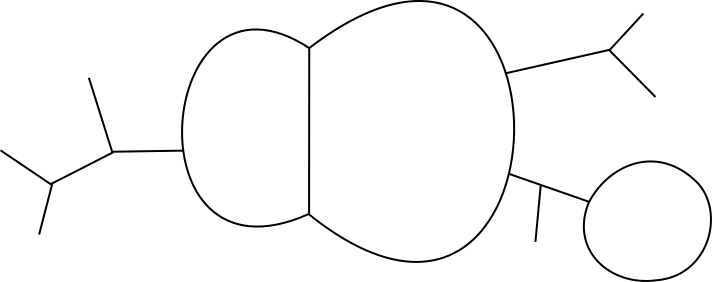}
\qquad\qquad\qquad
\includegraphics[scale=0.25]{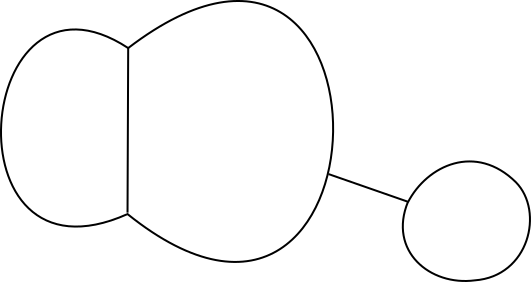}
\caption{A metric graph  (left) and its stabilization  (right).}
\label{fig:stabilization}
\end{figure}
\end{eg}

\subsection{Matroids}
\label{subsec:matroids}

In this section we review the definition of a matroid. 
In particular, we recall the graphic matroid and cographic matroid associated to a connected graph.
Cographic matroids will be useful for understanding the structure 
of the Jacobian of a metric graph (see Section \ref{subsec:int-laplacian}).
For a reference on matroids, see \cite{Oxl} or \cite{Kat}.

A {\em matroid} $M = (E,\mathcal B)$ 
is a finite set $E$ equipped with a nonempty collection  $\mathcal B \subset 2^E$
of subsets of $E$,
called the {\em bases} of the matroid,
satisfying the basis exchange axiom:
for distinct subsets $B_1, B_2 \in \mathcal B$,
there exists some $x \in B_1 \backslash B_2$ and $y \in B_2 \backslash B_1$
such that $(B_1 \backslash x) \cup y \in \mathcal B$.
In other words, 
from $B_1$ we can produce a new basis by exchanging an element of $B_1$ for an element of $B_2$.
An {\em independent set} of a matroid $M = (E,\mathcal B)$ is a subset of $E$ 
which is a subset of some basis.
A {\em cycle} of $M$ is a subset of $E$ which is minimal among those not contained in any basis, under the inclusion relation.
The {\em rank}
of a subset $A\subset E$ is the cardinality of a maximal independent set contained in $A$;
we denote the rank by 
${\rm rk}_M(A)$ or
${\rm rk}(A)$ if the matroid is understood from context.

Given a graph $G = (V,E)$,
the {\em graphic matroid} $M(G)$ is the matroid on the ground set $E = E(G)$
with bases $\mathcal B = \{E(T) : T \text{ is a spanning tree of } G\}$.
An {independent set} in $M(G)$ is a subset of edges which 
span an acyclic subgraph. 
(i.e. $h^1(G|A) = 0$.)
A cycle in $M(G)$ is a cycle in the graph-theoretic sense,
i.e. a subset of edges which span a subgraph homeomorphic to a circle.
The graphic matroid $M(G)$ is also known as the {\em cycle matroid} of $G$.


\begin{eg}
Suppose $G$ is the {\em Wheatstone graph} shown in Figure \ref{fig:wheatstone}.
The bases of $M(G)$ are  $\{abd, abe, acd, ace, ade, bcd, bce, bde\}$.
The cycles are $\{abc, abde, cde\}$.
(Here $abc$ is shorthand for the set $\{a,b,c\}$.)
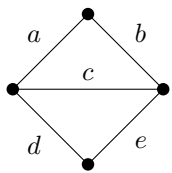
\begin{figure}[h]
\centering
\begin{tikzpicture}
\draw (-1,0) -- (0,1) node[midway,above left] {$a$};
\draw (0,1) -- (1,0) node[midway, above right] {$b$};
\draw (-1,0) -- (1,0) node[midway, above] {$c$};
\draw (-1,0) -- (0,-1) node[midway, below left] {$d$};
\draw (0,-1) -- (1,0) node[midway, below right] {$e$};

\node at (-1,0) [circle,fill,scale=0.5] {};
\node at (1,0) [circle,fill,scale=0.5] {};
\node at (0,-1) [circle,fill,scale=0.5] {};
\node at (0,1) [circle,fill,scale=0.5] {};
\end{tikzpicture}
\caption{Wheatstone graph.}
\label{fig:wheatstone}
\end{figure}
\end{eg}

Given a graph $G = (V,E)$,
the {\em cographic matroid} $M^\perp(G)$ 
is the matroid on the ground set $E = E(G)$
whose bases are complements of spanning trees of $G$.
An {independent set} in $M^\perp(G)$ is a set of edges whose removal does not 
disconnect $G$
(i.e. a set $A \subset E$ such that $G \backslash A$ is connected, 
equivalently 
$h^0(G\backslash A) = 1$).
{An edge set $A\subset E(G)$ is called a {\em cut} of $G$ if $G\backslash A$ is disconnected.}
A cycle in $M^\perp(G)$ is a minimal set of edges $A$ such that 
$h^0(G\backslash A) = 2$;
this is called a {\em simple cut} or a {\em bond} of $G$.
The cographic matroid is also known as the {\em cocycle matroid} or {\em bond matroid}
of $G$.
For more on cographic matroids, see \cite[Chapter 2.3]{Oxl}.

Note: when discussing the graphic or cographic matroid of a graph $G$,
we always use ``cycle of $G$'' to refer to a cycle in the {\em graphic} matroid sense.

\begin{eg}
Suppose $G$ is the Wheatstone graph, shown in Figure \ref{fig:wheatstone}.
The bases of the cographic matroid $M^\perp(G)$ are  
$\{ac, ad, ae, bc, bd, be, cd, ce\}$.
The cycles of $M^\perp(G)$ are 
$\{ab, acd, ace, bcd, bce, de\}$.
\end{eg}

\subsection{Girth and independent girth}

Recall that a {\em cycle} of a graph is a subgraph homeomorphic to a circle.
The {\em girth} $\gamma = \gamma(G)$ of a graph is the minimal number of edges of a cycle.
In other words,
if $\mathcal C(G)$ denotes the set of cycles of $G$, then
\begin{equation}
\label{eq:girth}
\gamma(G) = \min_{C \in \mathcal C(G)} \{ \# E(C) \}.
\end{equation}
We introduce a modification of girth, which naturally extends to an integer-valued invariant on metric graphs.
\begin{definition}
The {\em independent girth} $\gamind$
of a connected graph is defined as
\begin{equation}
\label{eq:ind-girth-rk}
\gamind(G) = \min_{C \in \mathcal C(G)} \{\, {\rm rk}^\perp(E(C)) \,\}
\end{equation}
where ${\rm rk}^\perp$ is the rank function of the cographic matroid $M^\perp(G)$.
(See Section~\ref{subsec:matroids} for discussion of cographic matroids.)
If $G$ has genus zero, we let $\gamind(G) = \gamma(G) = +\infty$.

Equivalently,
\[ \gamind(G) = \min_{C\in \mathcal C(G)} \{\, \# E(C) + 1 - h_0(G\backslash E(C)) \,\} \]
where 
$G\backslash E(C)$ denotes the subgraph obtained by deleting the interior of each edge in $C$,
and $h_0$ denotes the number of connected components of a topological space.
\end{definition}

As a first example, the Wheatstone graph in Figure~\ref{fig:wheatstone} has independent girth $\gamind(G) = 2$.
Indeed, every cycle $C$ in the graph has cographic rank $2$.
In contrast, the shortest cycle has $3$ edges so $\gamma(G) = 3$.

\begin{prop}
\label{prop:gamind}
\begin{enumerate}[(a)]
\item 
For any graph $G$, we have $\gamind(G) \leq \gamma(G)$.

\item 
If 
$(G,\ell)$ and $(G',\ell')$
are combinatorial models for the same metric graph $\Gamma$,
then $\gamind(G) = \gamind(G')$.
\qedhere
\end{enumerate}
\end{prop}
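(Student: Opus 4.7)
Part (a) should be immediate from the second (equivalent) formula for $\gamind$ given in the definition. Since $G \setminus E(C)$ retains all vertices of $G$ and contains at least one vertex (as $G$ is nonempty), we have $h_0(G \setminus E(C)) \geq 1$ for every cycle $C$. Substituting into $\#E(C) + 1 - h_0(G\setminus E(C))$ gives an upper bound of $\#E(C)$, and taking the minimum over cycles yields $\gamind(G) \le \gamma(G)$. (Both sides are $+\infty$ when $g(G) = 0$.)

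For part (b), I would first reduce to the case of a single edge subdivision. Any two combinatorial models $(G,\ell)$ and $(G',\ell')$ of the same metric graph $\Gamma$ admit a common refinement obtained by subdividing edges of each, so it suffices to prove that $\gamind$ is preserved when a single edge $e \in E(G)$ is replaced by two edges $e_1, e_2$ joined at a new valence-two vertex $w$. Denote the resulting graph $G'$.

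The main content is then a case analysis on cycles. Cycles of $G'$ are in natural bijection with cycles of $G$: a cycle of $G'$ passing through $w$ must use both $e_1$ and $e_2$ (since $w$ has valence $2$ in $G'$), and such cycles correspond to cycles of $G$ through $e$, while cycles of $G'$ avoiding $w$ coincide with cycles of $G$ avoiding $e$. For a cycle $C$ of $G$ with corresponding cycle $C'$ of $G'$, I would then show
\[
 \#E(C') + 1 - h_0(G' \setminus E(C')) \;=\; \#E(C) + 1 - h_0(G \setminus E(C)).
\]
In the case $e \in E(C)$, subdivision increases both $\#E(C)$ by one (since $e_1, e_2$ replace $e$) and $h_0$ by one (since deleting both $e_1, e_2$ from $G'$ isolates the new vertex $w$), so the two increments cancel. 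In the case $e \notin E(C)$, the edges $e_1, e_2$ remain in $G' \setminus E(C')$, so $w$ is connected to its neighbors by a length-two path; the connected components of $G' \setminus E(C')$ are thus in bijection with those of $G \setminus E(C)$, and both $\#E$ and $h_0$ are unchanged. Taking the minimum over cycles gives $\gamind(G') = \gamind(G)$.

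I don't anticipate any serious obstacle: the only subtle point is the bijection between cycles under subdivision, and this is handled by observing that a valence-two vertex in a cycle forces the use of both of its incident edges. An alternative matroid-theoretic argument is possible, using the fact that edge subdivision corresponds to a series extension of the graphic matroid (equivalently, a parallel extension of the cographic matroid) and that the cographic rank function on the original ground set is unchanged; but the direct combinatorial verification above is shorter.
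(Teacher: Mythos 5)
Your proposal is correct and follows essentially the same strategy as the paper: part~(a) by observing that the $h_0$ (equivalently, cographic rank) correction can only lower the cycle length, and part~(b) by reducing to a single edge subdivision via common refinements and using the natural bijection of cycles. The only difference is that you verify by a direct case analysis that $\#E(C)+1-h_0(G\setminus E(C))$ is preserved under subdivision, whereas the paper invokes the invariance of the cographic matroid up to simplification --- a route you also mention as an alternative.
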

\begin{proof}
(a) 
The rank function (of any matroid) satisfies ${\rm rk}^\perp(A) \leq \# A$. 
The claim follows from comparing definitions \eqref{eq:girth} and \eqref{eq:ind-girth-rk}.


(b)
Since any two combinatorial models of $\Gamma$ have a common refinement by edge subdivisions, it suffices to show that independent girth does not change under subdivision of edges.
Suppose $G'$ is obtained from $G$ by applying edge subdivision.
The cycle sets $\cC(G)$ and $\cC(G')$ are naturally in bijection,
and this bijection preserves cographic rank, since edge subdivision does not change the cographic matroid after simplification.
\end{proof}
Proposition~\ref{prop:gamind}(b)
implies that $\gamind$ is a well-defined invariant for a metric graph.
\begin{dfn}
\label{dfn:gamind-metric}
Given a metric graph $\Gamma$, let
\[
\gamind(\Gamma) := \gamind(G)
\quad
\text{for any choice of model }\Gamma = (G,\ell). \qedhere
\]
\end{dfn}
\noindent
Note that $\gamind$ is also invariant under stabilization (see Section \ref{subsec:stabilization}), 
i.e. the stabilization $\stable(\Gamma)$ satisfies
\begin{equation*}
\gamind(\Gamma) = \gamind(\stable( \Gamma)) .
\end{equation*}

Girth weakly increases under edge deletion, i.e. 
$ \gamma(G\backslash e) \geq \gamma(G)$ for any edge $e$,
since $\mathcal C(G\backslash e) \subset \mathcal C(G)$.
The following examples demonstrate that edge deletion
can increase or decrease $\gamind(G \backslash e)$, relative to $\gamind(G)$.

\begin{eg}
Consider the graphs in  Figure~\ref{fig:ind-girth-ex1}.
The graph on the left has seven simple cycles;
their lengths are
$\{ 4,4,4,6,6,6,6\}$,
and their ranks in the cographic matroid are all $3$.
For this graph, 
$\gamma = 4$ and $\gamind = 3$.
After deleting a central edge, the resulting graph on the right has 
three simple cycles with lengths $\{4, 6, 6\}$
and cographic rank $2$;
hence $\gamma = 4$ and $\gamind = 2$.
\begin{figure}[h]
\centering
		\begin{tikzpicture}[xslant=-0.6,xscale=1.2]
			\coordinate (A) at (0,0);
			\coordinate (B) at (1,0);
			\coordinate (C) at (2,1);
			\coordinate (D) at (2,2);
			\coordinate (E) at (1,2);
			\coordinate (F) at (0,1);
			\coordinate (O) at (1,1);
		
			\draw (A) -- (B) -- (C) -- (D) -- (E) -- (F) -- cycle;
			\foreach \c in {B,D,F} {
				\draw (O) -- (\c);
			}
		
			\foreach \c in {A,B,C,D,E,F,O} {
				\fill[xslant=0.5] (\c) circle (1.8pt);
			}
		\end{tikzpicture}
\qquad\qquad\qquad\qquad
		\begin{tikzpicture}[xslant=-0.6,xscale=1.2]
			\coordinate (A) at (0,0);
			\coordinate (B) at (1,0);
			\coordinate (C) at (2,1);
			\coordinate (D) at (2,2);
			\coordinate (E) at (1,2);
			\coordinate (F) at (0,1);
			\coordinate (O) at (1,1);
		
			\draw (A) -- (B) -- (C) -- (D) -- (E) -- (F) -- cycle;
			\foreach \c in {B,D} {
				\draw (O) -- (\c);
			}
		
			\foreach \c in {A,B,C,D,E,F,O} {
				\fill[xslant=0.5] (\c) circle (1.8pt);
			}
		\end{tikzpicture}
\caption{Graphs with independent girth $3$ (left) and independent girth $2$ (right).}
\label{fig:ind-girth-ex1}
\end{figure}
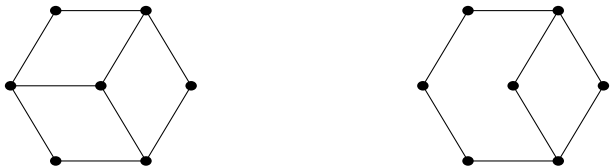
\end{eg}

\begin{eg}
Consider the graph in Figure~\ref{fig:ind-girth-ex2}.
This graph has $\gamma = 4$ and $\gamind = 3$, with the minimum achieved on the $4$-cycle in the middle.
After deleting one of the horizontal edges in the middle cycle,
the resulting graph has $\gamma = 4$ and $\gamind = 4$.
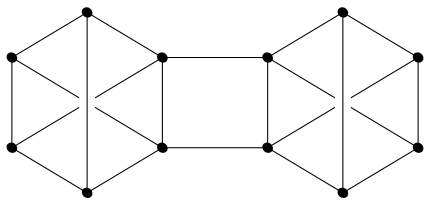
\begin{figure}[h]
\centering
				\begin{tikzpicture}[yslant=-0.6,yscale=1.2]
				\newcommand*{\defcoords}{
					\coordinate (A) at (0,0);
					\coordinate (B) at (1,0);
					\coordinate (C) at (2,1);
					\coordinate (D) at (2,2);
					\coordinate (E) at (1,2);
					\coordinate (F) at (0,1);
					\coordinate (O) at (1,1);
				}
				\defcoords
				
					\draw (A) -- (D);
					\draw (C) -- (F);
					\draw[line width=6pt, color=white] (B) -- (E);
					\draw (B) -- (E);
					\draw (A) -- (B) -- (C) -- (D) -- (E) -- (F) -- cycle;
					\foreach \c in {A,B,C,D,E,F} {
						\fill[xslant=0.5] (\c) circle (1.8pt);
					}
					
					\draw (C) -- +(1.4,0.7);
					\draw (D) -- +(1.4,0.7);

					\begin{scope}[shift={(3.4,1.7)}]
					\defcoords
					\draw (A) -- (D);
					\draw (C) -- (F);
					\draw[line width=6pt, color=white] (B) -- (E);
					\draw (B) -- (E);
					\draw (A) -- (B) -- (C) -- (D) -- (E) -- (F) -- cycle;
				
					\foreach \c in {A,B,C,D,E,F} {
						\fill[xslant=0.5] (\c) circle (1.8pt);
					}
					\end{scope}
				\end{tikzpicture}
\caption{Graph with girth $4$ and independent girth $3$.}
\label{fig:ind-girth-ex2}
\end{figure}
\end{eg}

The following two results give upper bounds on girth and independent girth, in terms of the genus $g$.
To bound the girth (Theorem~\ref{thm:girth-genus-bound}), it is necessary to restrict ourselves to stable graphs---otherwise, the family of $n$-cycles has unbounded girth but fixed genus.
For independent girth, on the other hand, the bound holds for all graphs, and all metric graphs, without needing the (semi)stable assumption.

\begin{thm}
\label{thm:girth-genus-bound}
There exists a constant positive $M$ such that
for any stable graph $G$ of genus $g \geq 2$,
the girth $\gamma = \gamma(G)$
satisfies
$ \gamma < M \log g $.
\end{thm}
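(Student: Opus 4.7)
The plan is to derive this as a graph-theoretic Moore-type bound, using Proposition~\ref{prop:stable-edge-bound} to control the vertex count from above. The first step is to reduce to simple graphs. Since a loop is a $1$-cycle and a pair of parallel edges is a $2$-cycle, the presence of either forces $\gamma(G) \leq 2$, in which case the inequality $\gamma < M \log g$ is trivial for any $M$ sufficiently large, using $g \geq 2$. So from now on I would assume $\gamma(G) \geq 3$, which guarantees that $G$ has no loops and no parallel edges.

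The core step is a BFS argument. Fix any vertex $v \in V(G)$ and set $r = \lfloor (\gamma - 1)/2 \rfloor$, so that every cycle in $G$ has at least $2r + 1$ edges. I would then argue by induction on depth that the vertices at distances $0, 1, \ldots, r$ from $v$ are all distinct: a repetition would produce a closed walk of length at most $2r + 1 \leq \gamma$, which contains a simple cycle of length strictly less than $\gamma$, contradicting the girth. Since $G$ is stable, every vertex has valence $\geq 3$, so the root $v$ contributes $\geq 3$ neighbors at depth $1$, and each vertex at depth $i < r$ contributes $\geq 2$ new neighbors at depth $i+1$ (distinct from one another and from all previously enumerated vertices, by the same no-short-cycle argument). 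Summing over levels gives the Moore-type lower bound
\[
\#V(G) \;\geq\; 1 + \sum_{i=1}^{r} 3 \cdot 2^{i-1} \;=\; 3 \cdot 2^{r} - 2.
\]

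To finish, I would combine this with the upper bound from stability. Exactly as in the proof of Proposition~\ref{prop:stable-edge-bound}, valence $\geq 3$ gives $\#V(G) \leq \frac{2}{3}\#E(G) \leq 2g - 2$. Comparing the two bounds yields $3 \cdot 2^r \leq 2g$, so $r \leq \log_2(2g/3)$, and since $\gamma \leq 2r + 2$ we conclude
\[
\gamma \;\leq\; 2\log_2(2g/3) + 2.
\]
The right-hand side is bounded above by $M \log g$ for a universal constant $M$, because $\log g \geq \log 2 > 0$ for $g \geq 2$, so the additive $+2$ and the change of base can be absorbed by enlarging $M$. I do not expect a real obstacle here: the argument is essentially the classical Moore bound on extremal graphs of given girth, and the only subtlety is the preliminary reduction from multigraphs to the simple case, which is handled by the trivial low-girth regime.
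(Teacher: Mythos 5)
Your argument is correct and is essentially the same Moore-type ball-growth argument the paper uses: both grow a neighborhood of radius $r < \gamma/2$ around a fixed vertex, use stability (valence $\geq 3$) to get exponential growth of the ball, and use Proposition~\ref{prop:stable-edge-bound} to cap the total size, yielding $\gamma = O(\log g)$. The only cosmetic differences are that you count vertices where the paper counts edges, and you explicitly dispose of the multigraph case ($\gamma \leq 2$) before running the BFS, a reduction the paper leaves implicit.
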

\begin{proof}
Recall that the {girth} $\gamma$ of a graph $G$ is 
the minimal number of edges of a cycle in $G$.
Let  $v$ be a vertex in $ V(G)$.
Let $N_r(v)$ denote the neighborhood of radius $r$ around $v$, in the graph $G$.
For any radius $r < \frac12 \gamma$,
the neighborhood $N_r(v)$ is a tree 
(i.e. $N_r(v)$ is connected and acyclic).

Recall that $G$ is {stable}
if every vertex has valence at least $ 3$.
Since $G$ is stable, we may calculate a simple lower bound for the number of edges in $N_r(v)$.
Namely, 
\begin{equation*}
\# E(N_r(v)) \geq 3 + 6 + \cdots + 3\cdot 2^{r-1} = 3(2^r - 1) .
\end{equation*}
This quantity is clearly a lower bound for the total number of edges $\# E(G)$.
Moreover, by Proposition~\ref{prop:stable-edge-bound} we have
$\# E(G) \leq 3g-3 $.
Thus
\begin{equation*}
3(2^r - 1) \leq \# E(G) \leq 3g-3,
\end{equation*}
which implies $2^r \leq g$
for any integer $r < \frac12 \gamma $.
%
Hence
\[ 2^{\gamma/2 - 1} < g ,
\qquad\text{equivalently}\qquad
\gamma < {2} \log_2 g  + 2 .\]
By the assumption $g \geq 2$, this bound implies
$ \gamma < {4}  \log_2 g $,
as desired.
\end{proof}

\begin{cor}
\label{cor:metric-girth-bound}
There exists a constant $M$ such that
for any metric graph $\Gamma$ of genus $g \geq 2$, 
the independent girth $\gamind$ satisfies
$\gamind < M \log g$.
\end{cor}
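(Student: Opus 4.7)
The plan is to reduce to the combinatorial bound on girth for stable graphs already proved in Theorem~\ref{thm:girth-genus-bound}. The corollary is essentially a transport statement: $\gamind$ is a metric-graph invariant which is insensitive to both stabilization and the choice of combinatorial model, so any bound valid on stable graphs transfers directly to metric graphs.

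First I would pass from $\Gamma$ to its stabilization $\stable(\Gamma)$. By Proposition~\ref{prop:stabilization}, this is a semistable metric graph obtained by a deformation retraction, hence has the same first homology and therefore the same genus $g \geq 2$. The remark immediately following Definition~\ref{dfn:gamind-metric} records that $\gamind(\Gamma) = \gamind(\stable(\Gamma))$, so it suffices to bound $\gamind$ on semistable metric graphs of genus at least two.

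Second, because $\stable(\Gamma)$ is semistable of genus $g \geq 2$, it has a unique stable combinatorial model $(G,\ell)$ (by the proposition preceding Proposition~\ref{prop:stable-edge-bound}). Definition~\ref{dfn:gamind-metric} then gives $\gamind(\stable(\Gamma)) = \gamind(G)$, and Proposition~\ref{prop:gamind}(a) yields $\gamind(G) \leq \gamma(G)$. Since $G$ is stable of genus $g \geq 2$, Theorem~\ref{thm:girth-genus-bound} supplies an absolute constant $M$ with $\gamma(G) < M \log g$. Chaining these (in)equalities gives
\[
\gamind(\Gamma) \;=\; \gamind(\stable(\Gamma)) \;=\; \gamind(G) \;\leq\; \gamma(G) \;<\; M \log g ,
\]
as required.

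I do not anticipate a real obstacle here: all of the genuine combinatorial work was done in Theorem~\ref{thm:girth-genus-bound}, and the invariance properties of $\gamind$ under both stabilization and change of combinatorial model were established in Proposition~\ref{prop:gamind}(b) and the subsequent remark. The only point to verify carefully is that the stable model of $\stable(\Gamma)$ is genuinely stable (so that Theorem~\ref{thm:girth-genus-bound} applies), which is exactly the role played by the hypothesis $g \geq 2$.
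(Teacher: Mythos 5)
Your proof is correct and takes the same route as the paper, which simply says ``combine Theorem~\ref{thm:girth-genus-bound} with Proposition~\ref{prop:gamind} and Definition~\ref{dfn:gamind-metric}''; you have spelled out the chain $\gamind(\Gamma) = \gamind(\stable(\Gamma)) = \gamind(G) \leq \gamma(G) < M\log g$ explicitly, which is exactly what that terse instruction unpacks to.
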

\begin{proof}
Combine Theorem~\ref{thm:girth-genus-bound}
with Proposition~\ref{prop:gamind}
and Definition~\ref{dfn:gamind-metric}.
\end{proof}

\begin{rmk}
The rank of the cographic matroid $M^\perp(G)$ is equal to the genus of the underlying graph $G$.
Thus it is straightforward from the definition of independent girth that $\gamind(G) \leq g$.
Corollary~\ref{cor:metric-girth-bound} gives a much stronger upper bound on $\gamind$.
\end{rmk}

\section{Divisors on metric graphs}
In this section we recall the theory of divisors and linear equivalence on metric graphs.

On a metric graph $\Gamma$, the {\em divisor group} $\Div(\Gamma)$ is the free abelian group generated by the 
points of $\Gamma$.
We also let $\Div_\RR(\Gamma) = \RR\otimes_\ZZ \Div(\Gamma)$.
In other words,
\begin{align*}
\Div(\Gamma) &= \Big\{ \sum_{x\in \Gamma} a_x x : a_x\in \ZZ,\,
a_x=0 \text{ for almost all } x \Big\} ,
\\
\Div_\RR(\Gamma) &= \Big\{\sum_{x\in \Gamma} a_x x : a_x \in \RR,\, a_x=0 \text{ for almost all }x \Big\}.
\end{align*}
A divisor $D = \sum_{x \in \Gamma} a_x x$ is {\em effective} if $a_x \geq 0$
for every $x$.
The {\em degree} map $\deg : \Div_\RR(\Gamma) \to \RR$
sends $D =\sum_{x \in \Gamma} a_x x$
to $\deg(D) = \sum_{x \in \Gamma} a_x$.

\subsection{Real Laplacian}
\label{subsec:real-laplacian}
A {\em piecewise linear function} on $\Gamma$
is a continuous function $f: \Gamma \to \RR$ which is linear on each edge of some combinatorial model. 
In other words, there exists a model $\Gamma = (G,\ell)$ such that
$
\frac{d}{dt}f(t)
$
is constant
on the interior of each edge $e$ in $E(G)$,
where $t$ is a length-preserving parameter on $e$.
We say a model $(G,\ell)$ is {\em compatible with} a piecewise linear function $f$
if $f$ is linear on each edge of $G$.
We let $\PL_\RR(\Gamma)$ denote the set of all piecewise linear functions on $\Gamma$,
which has the structure of a vector space over $\RR$.

The {\em metric graph Laplacian} $\Divisor$ is the $\RR$-linear map from 
$\PL_\RR(\Gamma)$ to $\Div_\RR(\Gamma)$ defined 
by taking the sum of incoming slopes at each point.
In more detail, for $f \in \PL_\RR(\Gamma)$,
let $(G,\ell)$ be a model for $\Gamma$ which is compatible with $f$,
and let
\begin{equation}
\Divisor(f) = - \sum_{v \in V(G)} a_v v
\qquad\text{where}\qquad
a_v = \sum_{\substack{e \in E(G) \\ e^+ = v \\ \text{or } e^- = v}} f'(t),
\end{equation}
such that in each summand $f'(t) = \frac{d}{dt}f(t)$,
the parameter $t$ is directed away from the vertex $v$.
Equivalently, the coefficient of $v$ in $\Divisor(f)$ is
\begin{align*}
- a_v 
&= 
- \sum_{\substack{e \in E(G) \\ e^+ = v }} \frac{f(e^-)}{\ell(e)}
- \sum_{\substack{e \in E(G) \\ e^- = v}} \frac{f(e^+)}{\ell(e)}
+ \Bigg( \sum_{\substack{e \in E(G) \\ e^+ = v \\ 
\text{or }e^- = v}} \frac{1}{\ell(e)} \Bigg) f(v) .
\end{align*}
Note that for any $f\in \PL_\RR(\Gamma)$, the divisor $\Divisor(f)$ has degree zero.
For any piecewise linear function $f$ there is a unique way to write
$\Divisor(f) = D - E$
where $D$ and $E$ are effective;
we call $\zeros(f) = D$ the {\em divisor of zeros} of $f$
and call $\poles(f) = E$ the {\em divisor of poles} of $f$.

\begin{prop}[Slope-current principle]
\label{prop:slope-current}
Suppose $f \in \PL_\RR(\Gamma)$ 
has zeros $\zeros(f)$ and poles $\poles(f)$ of degree $d\in \RR$.
Then the slope of $f$ is bounded by $d$, i.e.
\[ 
	|f'(x)| \leq d \qquad \text{for any $x \in \Gamma$ where $f$ is linear}. \qedhere
\]
\end{prop}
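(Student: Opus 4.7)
My plan is to prove the bound via a discrete Stokes-type identity applied to a superlevel set of $f$. After replacing $f$ with $-f$ if necessary, we may assume $f'(x) \geq 0$; the case $f'(x) = 0$ is immediate, so suppose $f'(x) > 0$. Fix a combinatorial model $(G,\ell)$ compatible with $f$ in which $x$ is an interior point of some edge $e$, and choose $c \in (f(x)-\delta,\, f(x))$ for $\delta > 0$ small enough that $c$ avoids the finite set of values $\{f(v) : v \in V(G)\}$. Set $U = \{y \in \Gamma : f(y) > c\}$, an open subset whose boundary $\partial U$ is a finite set of points in the interiors of edges of $G$. By construction, $\partial U$ contains a point $y \in e$ close to $x$, and since $f$ is linear on $e$, we have $|f'(y)| = |f'(x)|$.

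The key step is to derive the identity
\[
\sum_{y \in \partial U} |f'(y)| \;=\; \sum_{v \in U} \Divisor(f)(v).
\]
To prove it, I refine $(G,\ell)$ so that every point of $\partial U$ becomes a vertex while $f$ remains linear on each edge. At each vertex $v \in U$ the coefficient of $v$ in $-\Divisor(f)$ equals the sum of outgoing slopes of $f$ at $v$. Summing over $v \in U \cap V(G)$, the contributions from edges with both endpoints in $U$ cancel in pairs (each such edge contributes $s_e$ from one endpoint and $-s_e$ from the other), leaving only contributions from edges crossing $\partial U$. Each such edge contributes the outgoing slope from its interior endpoint, which equals $-|f'(y)|$ for the corresponding $y \in \partial U$, since the slope points out of $U$ in the direction of decreasing $f$. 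Rearranging signs yields the displayed identity.

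Once the identity is established, the conclusion follows quickly. Since $\Divisor(f) = \zeros(f) - \poles(f)$ and $\poles(f)$ is effective,
\[
\sum_{v \in U} \Divisor(f)(v) \;\leq\; \sum_{v \in U} \zeros(f)(v) \;\leq\; \deg(\zeros(f)) = d.
\]
Every term on the left-hand side of the identity is non-negative, so in particular $|f'(y)| \leq d$ for the point $y \in \partial U \cap e$ identified above, whence $|f'(x)| = |f'(y)| \leq d$.

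I expect the main obstacle to be the bookkeeping in the Stokes-type identity: one must verify that refining the combinatorial model does not alter $\Divisor(f)$ (this is the model-independence of the metric-graph Laplacian) and that the sign cancellations for edges internal to $U$ are handled consistently with the orientation conventions for outgoing slopes. The underlying idea, however, is just a discrete conservation-of-flux principle: the net Laplacian source inside $U$ equals the net outward flux of $f'$ across $\partial U$, and the effectiveness of $\poles(f)$ bounds this total flux above by $\deg(\zeros(f)) = d$.
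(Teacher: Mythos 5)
Your argument is correct and complete. Note that the paper does not actually prove this proposition in-text---it cites \cite[Proposition 3.6]{R}---so there is no argument in the paper to compare against; you have supplied a self-contained proof where the paper defers to an external reference. Your route is the natural discrete conservation-of-flux argument: cut along a generic superlevel set $U=\{f>c\}$ with $c$ just below $f(x)$ and avoiding the finitely many vertex values, refine the compatible model so each crossing point of $\partial U$ becomes a (bivalent) vertex, and sum the relation $(-\Divisor(f))(v)=(\text{sum of outgoing slopes at }v)$ over $v\in U$. Internal edges cancel in pairs, the new bivalent crossing vertices contribute nothing to $\Divisor(f)$, and each crossing edge contributes the outward slope $-|f'(y)|$ at its $U$-side endpoint, which yields exactly your identity $\sum_{y\in\partial U}|f'(y)|=\sum_{v\in U}\Divisor(f)(v)$. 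Effectiveness of $\zeros(f)$ and $\poles(f)$ bounds the right side by $\deg\zeros(f)=d$, and since every term on the left is nonnegative, the particular crossing $y$ on the edge through $x$ gives $|f'(x)|=|f'(y)|\leq d$. The two bookkeeping points you flag (model-independence of $\Divisor$, and the sign of the outward flux under the paper's convention $\Divisor(f)=-\sum_v a_v v$) are both handled correctly; one could add that the requirement that $(f(x)-\delta,f(x))$ avoid $\{f(v):v\in V(G)\}$ already forces $c>f(e^-)$ for the lower endpoint $e^-$ of the edge through $x$, which is what guarantees the crossing point $y$ lands on that edge.
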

\begin{proof}
See \cite[Proposition 3.6]{R}.
\end{proof}

\begin{rmk}
The above proposition has a ``physical'' interpretation: 
$f$ gives the voltage in the resistor network $\Gamma$ when subjected to an external current of
$\poles(f)$ units flowing into the network
and $\zeros(f)$ units flowing out.
The slope $|f'(x)|$ is equal to the current flowing through the wire containing $x$, 
which must be no more than the total in-flowing (or out-flowing) current.
\end{rmk}
\noindent The bound in Proposition~\ref{prop:slope-current} is attained only on bridge edges,
and only when all zeros are on one side of the bridge
and all poles are on the other side.

As remarked earlier, the divisor $\Divisor(f)$ has degree zero for any $f\in \PL_\RR(\Gamma)$.
In the converse direction, 
it is natural to ask, 
which degree-zero divisors are equal to $\Divisor(f)$ for some $f$?
The answer is ``all of them.''
In particular, 
taking the divisor $D = y - z$ 
for arbitrary points $y,z \in \Gamma$,
there exists a function $f$ satisfying 
$\Divisor(f) = y - z$.

\begin{thmdfn}
\label{dfn:unit-potential}
For $y, z \in \Gamma$,
the {\em unit potential function} $\potent{z}{y}$ is the unique function in $\PL_\RR(\Gamma)$ satisfying
\[
\Divisor(\potent{z}{y}) = y - z 
\quad \text{and}\quad 
\potent{z}{y}(z) = 0. \qedhere
\]
\end{thmdfn}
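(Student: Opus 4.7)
The plan is to prove uniqueness first (an easy maximum principle argument) and then existence by reducing to a finite-dimensional linear algebra statement about the weighted Laplacian matrix on a combinatorial model in which $y$ and $z$ appear as vertices.

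For uniqueness, suppose $f_1, f_2 \in \PL_\RR(\Gamma)$ both satisfy the two conditions, and set $h = f_1 - f_2$. Then $h \in \PL_\RR(\Gamma)$, $\Divisor(h) = 0$, and $h(z) = 0$. I will show $h \equiv 0$ by a maximum principle. Since $\Gamma$ is compact, $h$ attains its maximum at some point $x_0 \in \Gamma$. Pick a model $(G,\ell)$ compatible with $h$ in which $x_0$ is a vertex; then the definition of the Laplacian says that the sum of the outgoing slopes of $h$ at $x_0$ is zero. But at a maximum every outgoing slope is $\leq 0$, so all outgoing slopes vanish. Hence $h$ is locally constant (equal to $h(x_0)$) on a neighborhood of $x_0$. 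The set $\{x \in \Gamma : h(x) = h(x_0)\}$ is then both open and closed, so by connectedness $h$ is constant; combined with $h(z) = 0$, this gives $h \equiv 0$.

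For existence, choose a combinatorial model $(G,\ell)$ for $\Gamma$ such that $y, z \in V(G)$ (always possible by subdividing edges). Let $L_\ell$ denote the weighted Laplacian matrix, i.e.\ the matrix of the restriction of $\Divisor$ to the subspace of $\PL_\RR(\Gamma)$ consisting of functions linear on each edge of $G$, with respect to the bases $\{\mathbf{1}_v\}_{v \in V(G)}$ and $\{v\}_{v \in V(G)}$. This is a symmetric matrix whose off-diagonal entries are $-1/\ell(e)$ summed over edges $e$ between two given vertices and whose row sums are zero; it is the Laplacian of a weighted graph, so standard theory (or the uniqueness argument applied to this finite-dimensional setting) shows $\ker L_\ell = \RR \cdot \mathbf{1}$ when $G$ is connected. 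By rank-nullity, $\im L_\ell$ has dimension $\# V(G) - 1$, and since $\im L_\ell$ is contained in the hyperplane of degree-zero divisors in $\RR^{V(G)}$, we conclude $\im L_\ell$ equals this hyperplane. In particular $y - z \in \im L_\ell$, so there exists $f_0 : V(G) \to \RR$ with $L_\ell(f_0) = y - z$. Extending $f_0$ linearly along each edge produces an element $f \in \PL_\RR(\Gamma)$ with $\Divisor(f) = y - z$, and $\potent{z}{y} := f - f(z)$ then satisfies both required conditions.

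The main subtlety is the maximum principle step: one has to be slightly careful that compatibility of the model with $h$ allows the local analysis at $x_0$, and that the openness of the level set really follows. Everything else is routine linear algebra on a connected weighted graph.
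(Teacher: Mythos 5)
Your proof is correct, and the existence argument takes a genuinely different route from the paper's. The paper only sketches a justification right after the statement: existence is deferred to Kirchhoff's explicit combinatorial formulas (Theorem~\ref{thm:kirchhoff}), which produce the slopes of $\potent{z}{y}$ as ratios of spanning-tree polynomials, while uniqueness is asserted by observing that $\Divisor(f)=0$ forces $f$ to be constant. Your maximum-principle argument is a complete and self-contained proof of that kernel claim, filling in the detail the paper leaves implicit; it is essentially the same idea, just spelled out. For existence, your rank-nullity computation on the weighted Laplacian matrix $L_\ell$ (kernel is $\RR\cdot\mathbf{1}$ on a connected graph, image is the degree-zero hyperplane, then extend $f_0$ linearly along edges and shift by $f_0(z)$) is clean, elementary, and avoids Kirchhoff entirely. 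The one thing worth noting is that the paper's appeal to Kirchhoff here is not incidental: the explicit spanning-tree expression for the slopes is what the paper later leverages (in Proposition~\ref{prop:slope-01}, Proposition~\ref{prop:vertex-torsion}, Lemma~\ref{lem:slope-nonconstant}, and the main theorems) to see that the slopes are nonconstant rational functions of edge lengths, which drives the ``very general'' genericity arguments. Your linear-algebra route gives existence more directly, but one would still need Kirchhoff downstream, so the paper's choice to introduce it here is economical.
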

\noindent The existence of the function $\potent{z}{y}$ can be shown by the explicit combinatorial formulas due to Kirchhoff (Theorem~\ref{thm:kirchhoff}),
which we discuss in Section~\ref{subsec:kirchhoff}.
The uniqueness of $\potent{z}{y}$ is easier to see;
indeed, the piecewise-linear functions $f$ which have $\Divisor(f) = 0$ are exactly the constant functions.
The observations of the current paragraph show that the Laplacian $ \Divisor: \PL_\RR(\Gamma) \to \Div_\RR(\Gamma)$
fits in an exact sequence
\[
0 \longrightarrow \RR \longrightarrow \PL_\RR(\Gamma) \xrightarrow{\;\Divisor\;} \Div_\RR(\Gamma) 
\longrightarrow \RR \longrightarrow 0,
\]
where the image of  $\RR \to \PL_\RR(\Gamma)$ is the set of constant functions on $\Gamma$,
and $\Div_\RR(\Gamma) \to \RR$ is the degree map.

\subsection{Integer Laplacian and Jacobian}
\label{subsec:int-laplacian}
Recall that $\Div(\Gamma)$ denotes the free abelian group generated by points of $\Gamma$.
A {\em piecewise $\ZZ$-linear function} on $\Gamma$ is a piecewise linear function whose slopes are integers, i.e.
there exists some model $\Gamma = (G,\ell)$ such that
$f'(t) \in \ZZ$ on the interior of each edge of $G$.
We let $\PL_\ZZ(\Gamma)$ denote the set of all piecewise $\ZZ$-linear functions on $\Gamma$.

The Laplacian $\Divisor$, defined in Section \ref{subsec:real-laplacian},
restricts to a map 
$
\Divisor: \PL_\ZZ(\Gamma) \to \Div(\Gamma).
$
Two divisors $D,E$ are {\em linearly equivalent} if there is some $f \in \PL_\ZZ(\Gamma)$ such that
$D = E + \Divisor(f) $.
We let $[D]$ denote the linear equivalence class of a divisor $D$,
i.e.
\[
[D] = \{ E \in \Div(\Gamma) : E = D + \Divisor(f) \text{ for some }f \in \PL_\ZZ(\Gamma) \}
\]

The {\em Picard group} $\Pic(\Gamma)$ is defined as the cokernel of 
$\Divisor : \PL_\ZZ(\Gamma) \to \Div(\Gamma)$.
The elements of $\Pic(\Gamma)$ are linear equivalence classes of divisors on $\Gamma$.
The integer Laplacian map fits in an exact sequence
\begin{equation*}
0 \longrightarrow \RR \longrightarrow \PL_\ZZ(\Gamma) \xrightarrow{\;\Divisor\;} \Div(\Gamma) \longrightarrow \Pic(\Gamma) \longrightarrow 0 .
\end{equation*}
The degree of a divisor class is well-defined,
so we have an induced degree map 
$\Pic(\Gamma) \to \ZZ$.
The {\em Jacobian group} $\Jac(\Gamma)$ is the kernel of this degree map,
so we have a short exact sequence
\[
0 \longrightarrow \Jac(\Gamma) \longrightarrow \Pic(\Gamma) \xrightarrow{\;\mathrm{deg}\;} \ZZ \longrightarrow 0.
\]
This short exact sequence splits, 
$\Pic(\Gamma) \cong \ZZ \times \Jac(\Gamma)$,
but this isomorphism is not canonical.
One way to obtain a splitting is to choose a point $q \in \Gamma$,
and define $\ZZ \to \Pic(\Gamma)$
by sending $n$ to the divisor class $[nq]$.
We also denote $\Jac(\Gamma)$ by $\Pic^0(\Gamma)$,
and use $\Pic^d(\Gamma)$ to denote the divisor classes of degree $d$.

The tropical Abel--Jacobi theorem, due to Mikhalkin and Zharkov {\cite[Theorem 6.2]{MZ}}, 
identifies the structure of $\Jac(\Gamma)$ as a connected topological abelian group.
\begin{thm}[Tropical Abel--Jacobi]
\label{thm:abel-jacobi}
Suppose $\Gamma$ is a metric graph of genus $g$.
Then
\begin{equation*}
\Jac(\Gamma) \cong \RR^g / \ZZ^g . \qedhere
\end{equation*}
\end{thm}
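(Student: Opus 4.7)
The plan is to realize $\Jac(\Gamma)$ explicitly as a quotient $\RR^g/\ZZ^g$ via a tropical Abel--Jacobi construction, adapting the classical period map from Riemann surfaces to the metric graph setting.

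First, I would fix a combinatorial model $(G,\ell)$ with oriented edges and introduce the $1$-chain space $C_1(\Gamma,\RR) := \RR^{E(G)}$ equipped with the edge-length inner product $\langle \alpha,\beta\rangle = \sum_{e \in E(G)} \ell(e)\,\alpha(e)\,\beta(e)$. The boundary map $\partial \colon C_1(\Gamma,\RR) \to \Div_\RR(\Gamma)$, defined on basis elements by $\partial(e) = e^+ - e^-$, has kernel the cycle space $H_1(\Gamma,\RR)$, which has dimension $g$ by the genus formula and contains the integral lattice $H_1(\Gamma,\ZZ) := H_1(\Gamma,\RR) \cap \ZZ^{E(G)} \cong \ZZ^g$. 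Since the inner product makes $H_1(\Gamma,\RR)$ a Euclidean space of dimension $g$ containing a full-rank lattice, the quotient $H_1(\Gamma,\RR)/H_1(\Gamma,\ZZ)$ is isomorphic to $\RR^g/\ZZ^g$ as a topological abelian group. Thus it suffices to construct a group isomorphism $\Jac(\Gamma) \cong H_1(\Gamma,\RR)/H_1(\Gamma,\ZZ)$.

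Next, I would define the tropical Abel--Jacobi map $\mathrm{AJ}\colon \Div^0(\Gamma) \to H_1(\Gamma,\RR)/H_1(\Gamma,\ZZ)$ by picking any integer $1$-chain $\eta \in \ZZ^{E(G)}$ with $\partial\eta = D$ (connectedness of $\Gamma$ guarantees existence) and letting $\mathrm{AJ}(D)$ be the orthogonal projection of $\eta$ onto $H_1(\Gamma,\RR)$, taken modulo $H_1(\Gamma,\ZZ)$. Any two integer chains with the same boundary differ by an integer cycle, so $\mathrm{AJ}$ is a well-defined, continuous, additive surjection onto its target. The remaining content is the tropical Abel theorem, namely that $\ker(\mathrm{AJ}) = \Divisor(\PL_\ZZ(\Gamma))$. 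The easy direction is the computational check that for $f \in \PL_\ZZ(\Gamma)$ the integer chain $\eta_f$ with $\eta_f(e) = f(e^+) - f(e^-)$ satisfies $\partial \eta_f = \Divisor(f)$ and pairs trivially with every cycle, since $\langle \eta_f,\gamma\rangle = \oint_\gamma df = 0$ for $\gamma \in H_1(\Gamma,\RR)$, so it projects to zero.

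The main obstacle is the converse inclusion: given $\mathrm{AJ}(D) = 0$, produce $f \in \PL_\ZZ(\Gamma)$ with $\Divisor(f) = D$. The strategy is to start with an integer chain $\eta$ satisfying $\partial\eta = D$ whose harmonic projection lies in $H_1(\Gamma,\ZZ)$, subtract off that integer cycle so that the new $\eta$ becomes orthogonal to all of $H_1(\Gamma,\RR)$, and then argue that an integer chain orthogonal to all real cycles is exact---i.e., $\eta = \eta_f$ for a function $f$ on vertices, recovered by integrating along any spanning tree from a fixed basepoint. The integrality of the edge-differences $\eta(e) \in \ZZ$ combined with an integer choice at the basepoint forces $f$ to take integer values at every vertex, yielding $f \in \PL_\ZZ(\Gamma)$ with $\Divisor(f) = D$. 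The delicate point is that orthogonality of a real chain to $H_1(\Gamma,\RR)$ with respect to the edge-length inner product is precisely the condition of being a gradient, and tracking how integrality propagates across this Hodge-type decomposition is the heart of the argument.
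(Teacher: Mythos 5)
The paper does not prove this theorem; it cites it as Theorem~6.2 of Mikhalkin--Zharkov \cite{MZ}. Your outline follows the standard route (a discrete Hodge decomposition of $C_1$ with respect to the edge-length inner product, plus a tropical Abel theorem identifying the kernel with principal divisors), so the comparison is really against the literature rather than against the paper. The overall strategy is sound, but two steps are stated incorrectly and would break as written.

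First, the chain you attach to $f\in\PL_\ZZ(\Gamma)$ is wrong. You set $\eta_f(e)=f(e^+)-f(e^-)$, but with the inner product $\langle\alpha,\beta\rangle=\sum_e \ell(e)\alpha(e)\beta(e)$ the orthogonal complement of $H_1(\Gamma,\RR)$ consists of chains of the form $\eta(e)=\bigl(f(e^+)-f(e^-)\bigr)/\ell(e)$, i.e.\ the \emph{slope} of $f$ along $e$, not the total change. With your formula, $\eta_f$ is generally not an integer chain (it equals slope times length), the identity $\partial\eta_f=\Divisor(f)$ fails (the Laplacian coefficients involve a $1/\ell(e)$ that your $\partial\eta_f$ does not produce), and $\langle\eta_f,\gamma\rangle$ equals $\sum_e\ell(e)\bigl(f(e^+)-f(e^-)\bigr)\gamma(e)$ rather than $\oint_\gamma df$. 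Replacing $\eta_f(e)$ by the slope $f'(e)\in\ZZ$ repairs all three at once.

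Second, in the converse direction you claim that integrality of $\eta(e)$ together with an integer basepoint value forces $f$ to take integer values at every vertex, and that this puts $f$ in $\PL_\ZZ(\Gamma)$. This misreads the definition: for a metric graph, $\PL_\ZZ(\Gamma)$ means integer \emph{slopes}, not integer vertex values (the integer-valued description applies only to the combinatorial object $\PL_\ZZ(G)$). Moreover the claim is simply false for generic real edge lengths, since integrating integer slopes along a spanning tree accumulates summands $\eta(e)\,\ell(e)$ which need not be integers. Fortunately no value-integrality is needed: once you have corrected $\eta$ so that $\eta(e)\in\ZZ$ is the slope of $f$ on each edge, $f\in\PL_\ZZ(\Gamma)$ is immediate. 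You should also say a word about how the construction handles divisors supported at non-vertex points (subdivide the model so that $\Supp(D)\subset V(G)$, and check the projection is compatible with subdivision), but that is a routine refinement.
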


Fix a basepoint $q$ on the metric graph $\Gamma$.
The {\em Abel--Jacobi map}
\begin{equation}
\label{eq:abel-jacobi}
\Abelj{q} : \Gamma \to \Jac(\Gamma)
\end{equation}
sends a point $ x\in \Gamma$ to the divisor class $[x-q]$.
More generally, 
if $D$ is a degree $d$ divisor
we have a higher-dimensional Abel--Jacobi map
\begin{equation*}
\Abelj{D}^{(d)} :  \Gamma^d \to \Jac(\Gamma)
\end{equation*}
which sends a tuple $(x_1,\ldots,x_d)$ to the divisor class
$[x_1 + \cdots + x_d - D]$.

\begin{prop}
\label{prop:jacobian-stabilization}
Let $\stable(\Gamma)$ denote the stabilization of $\Gamma$
(see Section~\ref{subsec:stabilization}).
\hfill
\begin{enumerate}[(a)]
\item 
The deformation retraction $r: \Gamma \to \stable(\Gamma)$
induces an isomorphism
$\Jac(\Gamma) \to \Jac(\stable(\Gamma))$ 
on Jacobians.

\item 
The inclusion $i: \stable(\Gamma) \to \Gamma$
induces an isomorphism
$\Jac(\stable(\Gamma)) \to \Jac(\Gamma)$ 
on Jacobians.
\qedhere
\end{enumerate}
\end{prop}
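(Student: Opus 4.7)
The plan is to prove (a) and (b) together by constructing, directly on divisors, group homomorphisms $r_{\ast}: \Jac(\Gamma) \to \Jac(\stable(\Gamma))$ and $i_{\ast}: \Jac(\stable(\Gamma)) \to \Jac(\Gamma)$ and then checking that they are mutually inverse. Writing $\Gamma' = \stable(\Gamma)$, recall from the proof of Proposition~\ref{prop:stabilization} that $\Gamma$ is obtained from $\Gamma'$ by attaching finitely many tree ``tails'' $T_{v_1}, \ldots, T_{v_k}$, where each $T_v$ meets $\Gamma'$ only at a single root vertex $v$ and the retraction $r$ collapses all of $T_v$ onto $v$.

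The first step is to set $r_{\ast}([x]) := [r(x)]$ on $\Div^0(\Gamma)$ and show the result descends to a well-defined map on Jacobians, by proving the pointwise identity $r_{\ast}(\Divisor f) = \Divisor(f|_{\Gamma'})$ for every $f \in \PL_\ZZ(\Gamma)$. Away from the root vertices both sides clearly agree, so all the content sits at each root $v$: the contribution to $r_{\ast}(\Divisor f)_v$ from the tail is $\sum_{x \in T_v} \Divisor(f)_x$, and applying the degree-zero identity of the Laplacian on the tree $T_v$ to $f|_{T_v}$ rewrites $\sum_{x \in T_v \setminus \{v\}} \Divisor(f)_x$ as the slope of $f$ at $v$ directed into $T_v$, which is precisely the correction needed to pass from $\Divisor(f)_v$ to $\Divisor(f|_{\Gamma'})_v$. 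I expect this bookkeeping to be the main technical nuisance. For the reverse direction, $i_{\ast}: \Div^0(\Gamma') \hookrightarrow \Div^0(\Gamma)$ is just the inclusion of divisor groups; to check that it descends to Jacobians, extend each $g \in \PL_\ZZ(\Gamma')$ to $\tilde g \in \PL_\ZZ(\Gamma)$ by the constant value $g(v)$ on each tail $T_v$, so that $\tilde g$ has zero slopes on tail edges and $\Divisor(\tilde g) = i_{\ast}(\Divisor g)$.

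Finally, I would verify that $r_{\ast}$ and $i_{\ast}$ are mutually inverse. Since $r \circ i = \mathrm{id}_{\Gamma'}$, the composite $r_{\ast} \circ i_{\ast}$ is visibly the identity on $\Jac(\Gamma')$. For the other composite, by $\ZZ$-linearity it suffices to check that $x - r(x)$ is a principal divisor on $\Gamma$ for every $x \in \Gamma$; for $x \in \Gamma'$ this is trivial, and for $x$ lying in a tail $T_v$ the explicit $\PL_\ZZ$ function supported on the unique arc in $T_v$ from $v$ through $x$ to a leaf of $T_v$, taking value $0$ at $v$, rising with slope $+1$ on $[v, x]$, and remaining constant past $x$, has divisor exactly $x - v$, as required.
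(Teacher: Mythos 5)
Your argument is correct in its overall structure, and it supplies a self-contained proof where the paper simply cites Caporaso for this fact. The three steps you lay out are exactly what is needed: (i) $r_{\ast}(\Divisor f) = \Divisor(f|_{\Gamma'})$ makes $r_{\ast}$ well-defined, and the bookkeeping at each root vertex $v$ via the degree-zero identity for the Laplacian on the tail tree $T_v$ does work out, since $\sum_{x\in T_v\setminus\{v\}}\Divisor(f)_x$ equals minus the contribution of $T_v$-directed slopes to $\Divisor(f)_v$, which is the correction term relating $\Divisor(f)_v$ to $\Divisor(f|_{\Gamma'})_v$; (ii) extending $g \in \PL_{\ZZ}(\Gamma')$ by constants on the tails shows $i_{\ast}$ descends; (iii) $r\circ i = \mathrm{id}_{\Gamma'}$ gives one composite, and showing $x - r(x)$ is principal gives the other.

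One small imprecision worth fixing: in step (iii), for $x$ in a tail $T_v$, your auxiliary function is not literally ``supported on the unique arc from $v$ through $x$ to a leaf'' if $T_v$ has branch points. To get a well-defined function on all of $\Gamma$ with divisor exactly $x - v$, define $h$ to be $0$ on $\Gamma'$, and on $T_v$ let $h(y) = d(v, p(y))$ where $p(y)$ is the nearest point to $y$ on the geodesic $[v,x]$ and distances are taken inside $T_v$; then $h$ rises with slope $1$ along $[v,x]$ and is locally constant off that segment, so $\Divisor(h) = x - v$. Alternatively, simply invoke that on a tree every degree-zero divisor is principal (every metric tree has trivial Jacobian). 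With that adjustment the proof is complete.
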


For a proof and further motivation, see Caporaso~\cite{Cap}.

\subsection{Cellular decomposition of the Jacobian}
In this section we recall how the geometry of the Abel--Jacobi map $\Abelj{q}: \Gamma \to \Jac(\Gamma)$ is related to the cographic matroid $M^\perp(G)$,
where $(G,\ell)$ is a model for $\Gamma$.
We describe cellular decompositions of the subset of effective divisor classes inside $\Pic^k(\Gamma)$ for $k\geq 0$;
each $\Pic^k(\Gamma)$ can be identified with $\Jac(\Gamma)$
by subtracting a fixed degree $k$ divisor class.

A consequence of Mikhalkin and Zharkov's proof \cite{MZ}
of the tropical Abel--Jacobi theorem (Theorem \ref{thm:abel-jacobi})
is that the Abel--Jacobi map 
$\Gamma \to \Jac(\Gamma)$ 
is linear on each edge of $\Gamma$.
The universal cover of $\Jac(\Gamma)$
is naturally identified with $H^1(\Gamma,\RR)$.
The Abel--Jacobi map,
restricted to a single edge $e \subset \Gamma$,
lifts locally to $e \to H^1(\Gamma, \RR)$.
The linear independence of the edge-vectors in the image $\Gamma \to \Jac(\Gamma)$
is exactly recorded by the cographic matroid $M^\perp(G)$,
for any combinatorial model $\Gamma = (G,\ell)$ (see Theorem~\ref{thm:cographic-dim}).

\begin{definition}
\label{dfn:eff-cell}
Let $\Gamma = (G,\ell)$ be a metric graph.
Given edges $e_1,\ldots,e_k \in E(G)$,
let $\Div(e_1,\ldots,e_k) \subset \Div^k(\Gamma)$
denote the set of effective divisors formed by adding together 
one point from each edge $e_i$.
Let $\Eff(e_1,\ldots, e_k)$ denote the corresponding set of effective divisor classes,
\[
\Eff(e_1,\ldots,e_k) = \{[x_1 + \cdots + x_k] : x_i \in e_i \} \subset \Pic^k(\Gamma).
\qedhere
\]
\end{definition}

The following result relates these cells of effective divisor classes with the cographic matroid (see Section~\ref{subsec:matroids}).
\begin{thm}
\label{thm:cographic-dim}
Let $\Gamma = (G,\ell)$ be  a metric graph.
The dimension of $\Eff(e_1,\ldots, e_k)$
is equal to the rank of $\{e_1,\ldots, e_k\}$
in the cographic matroid $M^\perp(G)$.
\end{thm}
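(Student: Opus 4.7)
The plan is to reduce the statement to a linear-algebra computation on the edge space of $G$, using the fact (implicit in \cite{MZ} and recalled just before the theorem) that the Abel--Jacobi map $\Abelj{q} : \Gamma \to \Jac(\Gamma)$ is affine-linear on each edge. Granting this, $\Eff(e_1,\ldots,e_k)$ is a translate of the image of the addition map $e_1 \times \cdots \times e_k \to \Jac(\Gamma)$; pulled back to the universal cover $\widetilde{\Jac(\Gamma)} \cong \RR^g$, this becomes the affine map $(t_1,\ldots,t_k) \mapsto \sum_i t_i v_{e_i}$, where $v_{e_i} \in \RR^g$ denotes the displacement vector of $\Abelj{q}$ across edge $e_i$. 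Consequently
\[
\dim \Eff(e_1,\ldots,e_k) = \dim_\RR \zspan\{v_{e_1},\ldots,v_{e_k}\}.
\]

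Next I would identify this span inside an explicit model of $\widetilde{\Jac(\Gamma)}$. The natural choice is the cycle space $H_1(\Gamma,\RR)$, viewed as a subspace of the edge space $\RR^{E(G)}$ equipped with the edge-length inner product $\langle e, e'\rangle = \delta_{e,e'}\ell(e)$. Directly from the construction of the tropical Jacobian in \cite{MZ}, I would verify that under this identification $v_e$ is precisely the orthogonal projection of the basis vector $e \in \RR^{E(G)}$ onto $H_1(\Gamma,\RR)$. This check---confirming that the infinitesimal Abel--Jacobi map on an edge coincides with the cycle-component of that edge in the orthogonal decomposition $\RR^{E(G)} = H_1(\Gamma,\RR) \oplus B^1(\Gamma,\RR)$ into cycle and cut spaces---is the main technical step, and is the point I would expect to require the most care.

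The remainder is a matroid computation. Set $A = \{e_1,\ldots,e_k\}$, let $\RR^A \subset \RR^{E(G)}$ be the corresponding coordinate subspace, and let $\pi : \RR^{E(G)} \to H_1(\Gamma,\RR)$ be the orthogonal projection. Then $\zspan\{v_{e_i}\} = \pi(\RR^A)$, so
\[
\dim \pi(\RR^A) = \#A - \dim(\RR^A \cap \ker\pi) = \#A - \dim(\RR^A \cap B^1(\Gamma,\RR)).
\]
A coboundary $df$, for $f : V(G) \to \RR$, is supported on $A$ precisely when $f$ is constant on every connected component of $G \setminus A$, so the space of such coboundaries (taken modulo constants) has dimension $h_0(G \setminus A) - 1$. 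Substituting gives
\[
\dim \Eff(e_1,\ldots,e_k) = \#A + 1 - h_0(G \setminus A) = {\rm rk}^\perp(A),
\]
using the formula for the cographic rank recalled in Section~\ref{subsec:matroids}. This is the desired identity.
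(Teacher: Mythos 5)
Your argument is correct and follows the same overall strategy as the paper's: identify $\Eff(e_1,\ldots,e_k)$ with the Minkowski sum of Abel--Jacobi edge displacement vectors $v_{e_1},\ldots,v_{e_k}$ in the universal cover, and compute the dimension of their span. The difference is in how the cographic-matroid identification is established. The paper cites Caporaso--Viviani (Definition~5.1.3 and Lemma~5.1.6 of \cite{CV}) for the fact that the vectors $\{v_e\}_{e\in E(G)}$ form a realization of $M^\perp(G)$, while you prove that fact from scratch: identifying $v_e$ with the orthogonal projection $\pi(e)$ of the edge basis vector onto $H_1(\Gamma,\RR)\subset\RR^{E(G)}$, then computing
\[
\dim\pi(\RR^A)=\#A-\dim(\RR^A\cap\ker\pi)
\]
and noting that an element of $\ker\pi$ is supported on $A$ exactly when the generating function is constant on each component of $G\setminus A$, yielding $\#A+1-h_0(G\setminus A)={\rm rk}^\perp(A)$. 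This buys a more self-contained proof, at the cost of carrying out the linear-algebra check the paper delegates to \cite{CV}. One small precision worth noting: with the inner product $\langle e,e'\rangle=\delta_{e,e'}\ell(e)$, the orthogonal complement of $H_1(\Gamma,\RR)$ consists of the \emph{weighted} gradients $\nabla f$ with $(\nabla f)(e)=(f(e^+)-f(e^-))/\ell(e)$, not the unweighted coboundaries $df$; however $df$ and $\nabla f$ always have the same support, so your intersection-dimension computation goes through unchanged.
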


\begin{proof}
For each edge $e_i \in E(G)$,
let $v_i \in H^1(\Gamma,\RR)$ denote a vector
parallel to the Abel--Jacobi image of $e_i$ in $\Jac(\Gamma)$.
Then according to Definition 5.1.3 and Lemma 5.1.6 of \cite{CV},
the set of vectors $\{v_i : e_i \in E(G)\}$
form a realization of the cographic matroid $M^\perp(G)$.
This means that the cographic rank of $\{e_1,\ldots,e_k\}$
agrees with the dimension of the linear span of $\{v_1,\ldots,v_k\}$.

The subset $\Eff(e_1,\ldots,e_k) \subset \Pic^k(\Gamma)$
is naturally identified with the Minkowski sum 
of the corresponding vectors $v_1,\ldots,v_k \in H^1(\Gamma,\RR)$,
so the claim follows.
\end{proof}

It will be useful to consider the space of effective divisor classes of fixed degree $d$.
This space is studied by Gross, Shokrieh, and T\'{o}thm\'{e}r\'{e}sz in 
\cite{GST}.

\begin{thm}
\label{thm:abks-deg-d}
Let $\Gamma = (G, \ell)$ be a metric graph of genus $g$.
For any integer $d$ in the range $0\leq d\leq g$,
the space $\Eff^d(\Gamma)$ 
of degree $d$ effective divisor classes 
is covered by the union of
$\Eff(e_1,\ldots,e_d)$
as $\{e_1,\ldots,e_d\}$ varies over independent sets of size $d$ in the cographic matroid $M^\perp(G)$.
\end{thm}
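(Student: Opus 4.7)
The plan is to establish, for any $[D] \in \Eff^d(\Gamma)$, the existence of an independent set $\{e_1,\ldots,e_d\}$ of size $d$ in $M^\perp(G)$ such that $[D] \in \Eff(e_1,\ldots,e_d)$. I would proceed by a deformation-and-reassignment scheme, using cographic rank as a measure of progress.

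First, fix any effective representative $D = x_1 + \cdots + x_d$ of $[D]$ and assign each $x_i$ to some closed edge $e_i$ containing it (breaking ties at vertices arbitrarily). This places $[D]$ in $\Eff(e_1,\ldots,e_d)$. Define the \emph{defect} of the assignment to be $d - \mathrm{rk}^\perp\{e_1,\ldots,e_d\}$, which by Theorem~\ref{thm:cographic-dim} equals $d - \dim \Eff(e_1,\ldots,e_d)$. The defect is zero precisely when the edges are distinct and form an independent set of $M^\perp(G)$, in which case we are done. The goal is to show that a positive defect can always be strictly reduced without changing the linear equivalence class.

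For the inductive step, suppose the defect is positive. Then the underlying edge set contains a cographic circuit, i.e.~a bond $A$ whose removal splits $G$ into exactly two connected components $V_1, V_2$. Consider the piecewise-linear function on $\Gamma$ equal to $0$ on (the closure of) $V_1$, equal to a parameter $t$ on $V_2$, and interpolated linearly on each edge of $A$. Its Laplacian is a degree-zero divisor supported at the two moving ``height-$0$'' and ``height-$t$'' points on each edge of $A$, so adding it to $D$ yields a one-parameter family of linearly equivalent effective divisors in which the points $x_i$ lying on edges of $A$ all translate along their edges at rates $1/\ell(e_i)$. I slide $t$ until one such point $x_{i_0}$ first reaches a vertex $v$ of $G$. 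I then relabel its containing edge as some $e' \neq e_{i_0}$ incident to $v$. Since $A$ is a \emph{minimal} edge cut, at least one edge incident to $v$ lies outside $A$, so a choice $e' \notin A$ exists; the basis-exchange axiom applied to the cographic matroid then guarantees that $e'$ can be chosen so that the new edge set has strictly larger cographic rank, lowering the defect.

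The main obstacle is verifying the final rank-increase step in full generality: if every edge at $v$ outside $A$ happens to participate in a second bond within $\{e_1,\ldots,e_d\}$, a single swap may not strictly raise the rank, and one must either choose the initial bond $A$ more carefully or iterate the argument, choosing at each stage a bond with a non-degenerate boundary vertex. A cleaner route that sidesteps this bookkeeping is to reduce to the degree-$g$ case via break divisors (Mikhalkin--Zharkov): extend $D$ to a degree-$g$ effective divisor $D + E$, represent $[D + E]$ by its unique break-divisor representative $\sum_{e \in T^c} y_e$ supported on the complement of a spanning tree $T$ (hence on a basis of $M^\perp(G)$), and then extract a size-$d$ subset of $T^c$ supporting a representative of $[D]$ using the polyhedral structure on $\Eff^d(\Gamma)$ developed in [GST].
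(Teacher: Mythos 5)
Your first approach has a gap that goes beyond the one you flag. The piecewise-linear function $f_t$ you describe (equal to $0$ on $\overline{V_1}$, to $t$ on $\overline{V_2}$, interpolated across the bond) has Laplacian supported at the \emph{vertices} incident to $A$, with non-integer coefficients $\pm t/\ell(e)$; adding $\Divisor(f_t)$ to $D$ neither translates the $x_i$ nor produces an integer divisor, so it is not the deformation you want. The correct deformation moves each $x_i$ at the \emph{same} arc-length speed (take $f$ of slope one on the moving segments and locally constant elsewhere), not at rate $1/\ell(e_i)$ as you write. More seriously, the assertion that ``a choice $e'\notin A$ exists'' when a point first reaches a vertex $v$ can fail: if $V_1 = \{v\}$ is a singleton component of $G\setminus A$, then every edge incident to $v$ lies in $A$ (in the Wheatstone graph with $A$ the two edges at the apex, sliding both points toward the apex traps you). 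So one must choose the slide direction, and in addition you have already observed yourself that the final rank-increase step is unjustified. These issues are not cosmetic and the argument as written does not close.

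Your second route does not help either. Given the break divisor $B = \sum_{e\in T^c} y_e$ of $[D+E]$, the class $[D] = [B - E]$ has $B - E$ generally non-effective, and nothing in Mikhalkin--Zharkov produces an effective representative of $[D]$ supported on a size-$d$ subset of $T^c$. Invoking ``the polyhedral structure on $\Eff^d(\Gamma)$ developed in [GST]'' for this extraction is essentially circular, since the statement you are trying to prove \emph{is} Theorem~A of \cite{GST}. That is exactly what the paper does: it cites \cite[Theorem A]{GST}, which says every effective class of degree $d\leq g$ contains a \emph{semibreak divisor}, i.e.\ one supported with multiplicity at most one on a cographically independent set of edges, which is precisely membership in some $\Eff(e_1,\ldots,e_d)$ with $\{e_1,\ldots,e_d\}$ independent. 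If you want a self-contained proof rather than a citation, the deformation strategy is the right shape (and resembles what Gross--Shokrieh--T\'{o}thm\'{e}r\'{e}sz actually do), but it needs the repairs described above.
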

\begin{proof}
The result \cite[Theorem A]{GST} states that every effective divisor class of degree $d$ contains a semibreak divisor.
The definition of semibreak divisor is equivalent to the condition that the divisor is in $\Eff(e_1,\ldots,e_d)$
for a set of edges $\{e_1,\ldots,e_d\}$ which is independent in the cographic matroid of $G$.

Theorem~\ref{thm:cographic-dim} implies that $\Eff(e_1,\ldots,e_d)$ has dimension $d$ for each independent set
$\{e_1,\ldots,e_d\}$,
so it follows that $\Eff^d(\Gamma)$ is a cellular complex of pure dimension $d$.
\end{proof}

%

A particular example is the case $d = g$, which was studied by An--Baker--Kuperberg--Shokrieh~\cite{ABKS}. 
In this case, the number of maximal cells of $\Pic^g(\Gamma)$ is equal to the number of spanning trees of $G$.

\subsection{Kirchhoff formulas}
\label{subsec:kirchhoff}
In this section we review Kirchhoff's formulas \cite{Kir}
for the unit potential functions $\potent{z}{y}$,
which are fundamental solutions to the Laplacian map (see Definition~\ref{dfn:unit-potential}).
Expositions of this material can be found in Bollob{\'a}s~\cite[$\S$II.1]{Bol}
and Grimmet~\cite[$\S$1.2]{Gri}.

\begin{thm}[Kirchhoff]
\label{thm:kirchhoff}
Suppose $\Gamma = (G,\ell)$ is a metric graph
with edge lengths $\ell : E(G) \to \RR_{>0}$.
For vertices $y,z \in V(G)$, 
let $\potent{z}{y}: \Gamma \to \RR$ denote 
the function in $\PL_\RR(\Gamma)$ which satisfies 
$\Divisor(\potent{z}{y}) = y - z$ and $j^y_z(z) = 0$.
Then the following relations hold.
\begin{enumerate}[(a)]
\item 
For any directed edge $\vec{e} = (e^+,e^-)$,
\begin{equation}
\label{eq:current}
\frac{j^y_z(e^+) - j^y_z(e^-)}{\ell(e) } 
= \frac{\sum_{T \in \trees(G)}\sgn(T,y,z,\vec e) w(T)}{\sum_{T \in \trees(G)} w(T)}
\end{equation}
where $\trees(G)$ denotes the spanning trees of $G$,
the {\em weight} $w(T)$ of a spanning tree is defined as
\[
	w(T) = \prod_{e_i \not\in E(T)} \ell(e_i),
\] 
and 
\[ 
	\sgn(T,y,z,\vec e) = \begin{cases}
	+1 & \text{if the path in $T$ from $y$ to $z$ passes through $\vec e$} \\
	-1 & \text{if the path in $T$ from $y$ to $z$ passes through $-\vec e$} \\
	0 & \text{otherwise}.
	\end{cases}
\]

\item 
The total potential drop between $y$ and $z$ is 
\begin{equation}
\label{eq:voltage}
j^y_z(y) - j^y_z(z) = \frac{\sum_{T \in \trees(G_0)} w(T)}{\sum_{T \in \trees(G)} w(T)}
\end{equation}
using the same notation as above, and where
the graph $G_0$ (in the numerator) is the graph obtained from $G$ by identifying vertices $y$ and $z$.
\qedhere
\end{enumerate}
\end{thm}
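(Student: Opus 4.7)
The plan is to identify $j = j^y_z$ with the voltage function of the electrical network on $\Gamma$ in which each edge $e$ is a resistor of resistance $\ell(e)$, driven by a unit external current injected at $y$ and drained at $z$. The defining condition $\Divisor(j) = y - z$ together with piecewise-linearity on edges is exactly Ohm's law combined with Kirchhoff's current law: the current through $\vec e$ is $I(\vec e) := (j(e^+) - j(e^-))/\ell(e)$, the net outflow at each interior vertex vanishes, and the net outflow is $+1$ at $y$ and $-1$ at $z$. Since $\ker(\Divisor) = \RR$ for the connected graph $\Gamma$, the normalization $j(z) = 0$ pins down the additive constant, making $j$ and hence $I$ unique.

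For (a), write $\Phi(\vec e)$ for the right-hand side of \eqref{eq:current}. The strategy is to show that $\Phi$ is itself a unit current flow from $y$ to $z$ satisfying Kirchhoff's voltage law, which forces $\Phi = I$. The current law is immediate tree-by-tree: for each spanning tree $T$, the unique simple path in $T$ from $y$ to $z$ enters and leaves each intermediate vertex once while leaving $y$ once and entering $z$ once, so $\sum_{e: e^- = v}\sgn(T,y,z,\vec e) - \sum_{e: e^+ = v}\sgn(T,y,z,\vec e) = \delta_{v,y} - \delta_{v,z}$ for every $T$. Weighting by $w(T)$ and normalizing by $\sum_T w(T)$ yields the current law for $\Phi$.

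The voltage law $\sum_{\vec e \in C} \ell(e)\,\Phi(\vec e) = 0$ for each oriented cycle $C$ is the main technical obstacle. I expect the cleanest route is via the weighted matrix-tree theorem. Setting $L^*$ to be the weighted Laplacian with conductances $c_e = 1/\ell(e)$, so that $L^*_{ij} = -\sum_{e : \{e^+,e^-\} = \{i,j\}} c_e$, the system $L^* j = \mathbf{1}_y - \mathbf{1}_z$ is solvable modulo constants; Cramer's rule applied to the reduced matrix $L^*_{\hat z}$ produces each difference $j(e^+) - j(e^-)$ as a ratio of cofactors, and the matrix-tree theorem expands each cofactor as a signed sum over spanning trees. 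Matching terms and converting from the conductance weighting $\prod_{e \in T} c_e$ to $w(T) = \prod_{e \notin T} \ell(e)$ by multiplying numerator and denominator by $\prod_{e \in E(G)} \ell(e)$ yields \eqref{eq:current} precisely. A purely combinatorial alternative would be to prove the voltage law by a sign-reversing involution on pairs $(T, \vec e)$ with $\vec e \in C$ on the $yz$-path in $T$, exchanging $\vec e$ with the other edge of $C$ in the fundamental cycle of $T$ with respect to some non-tree edge; I expect this to be delicate to pin down carefully.

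Part (b) will follow analogously from the matrix-tree theorem: $j(y) - j(z)$ is the effective resistance $R_{\mathrm{eff}}(y,z)$, which Cramer's rule expresses as $\det(L^*_{\hat y \hat z})/\det(L^*_{\hat z})$. By the weighted matrix-tree theorem, these two cofactors equal the weighted tree numbers $\kappa(G_0)$ and $\kappa(G)$ respectively, where $G_0 = G/(y \sim z)$ arises naturally because contracting $y$ with $z$ corresponds to deleting the second row and column from $L^*_{\hat z}$. Converting the conductance weights to the $w(T)$ normalization by multiplying numerator and denominator by $\prod_{e \in E(G)} \ell(e)$ produces the formula \eqref{eq:voltage}. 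Alternatively, one can derive (b) from (a) by telescoping along any oriented path from $z$ to $y$ in a fixed spanning tree $T_0$ and identifying the resulting double sum with $\sum_{T' \in \trees(G_0)} w(T')$ via a bijection between pairs $(T_0,T)$ arising in the telescoping and spanning trees of the contracted graph.
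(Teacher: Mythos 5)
Your sketch is correct, but it does considerably more work than the paper, which simply cites Bollob\'as for part (a) (that reference proves exactly the weighted matrix--tree / Cramer's rule argument you outline) and derives part (b) from part (a) without a second determinant computation. Namely, the paper forms $G_+$ by adjoining an auxiliary edge $e_0$ between $y$ and $z$ and applies part (a) to $G_+$ with $\vec e = \vec e_0$. Spanning trees of $G_+$ containing $e_0$ correspond weight-preservingly to spanning trees of $G_0 = G/(y\sim z)$, while those avoiding $e_0$ are spanning trees of $G$ with weight multiplied by $\ell(e_0)$; comparing the potential on $G_+$ with the potential on $G$ (they differ by the scalar factor $1-s$, where $s$ is the current drawn through $e_0$), the $\ell(e_0)$ factors cancel and \eqref{eq:voltage} drops out. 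Your Cramer's-rule route to (b) also works---the numerator cofactor enumerates weighted spanning $2$-forests separating $y$ from $z$, which biject with spanning trees of $G_0$---but it invokes the all-minors matrix--tree theorem, whereas the auxiliary-edge argument rests on part (a) alone, which is exactly what makes it attractive once (a) is granted by citation. I would be more cautious about your telescoping alternative for (b): after swapping the order of summation you must show that, for each $T \in \trees(G)$, the signed overlap $\sum_{\vec e\in\pi}\ell(e)\,\sgn(T,y,z,\vec e)$ of the $yz$-path in $T_0$ with the $yz$-path in $T$ has a clean tree-combinatorial meaning, and the surviving pairs $(\vec e,T)$ do not obviously biject with $\trees(G_0)$ without cancellation---precisely the delicacy the auxiliary-edge trick sidesteps.
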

\begin{proof}
For part (a), see Bollob\'as \cite[Theorem 2, $\S$II.1]{Bol}.
Part (b) follows 
from consideration of 
the graph $G_+$ obtained by adding an auxiliary edge
to $G$ between $y$ and $z$,
and then applying part (a) to $G_+$ with respect to the auxiliary edge.
\end{proof}

The expressions \eqref{eq:current}, \eqref{eq:voltage} 
are both
a ratio of homogeneous polynomials\footnote{moreover, 
polynomials whose nonzero coefficients  are all $\pm 1$}
in the variables $\{ \ell(e_i) : e_i \in E(G)\}$.
In \eqref{eq:current}, the numerator and denominator are homogeneous of degree $g$;
in \eqref{eq:voltage}, the denominator has degree $g$ while the numerator has degree $g+1$.
As a result, 
the expression \eqref{eq:current} 
is invariant under simultaneous rescaling of edge lengths,
while 
the expression \eqref{eq:voltage} 
scales linearly with respect to simultaneously rescaling all edge lengths.

\begin{eg}
\label{eg:kirchhoff1}
Consider the theta graph shown in Figure~\ref{fig:kirchhoff-ex1},
where $a = \ell(e_1)$, $b = \ell(e_2)$, $c = \ell(e_3)$ are edge lengths.
The spanning trees are $\trees(G) = \{e_3, e_2, e_1\}$
which have respective weights
$\{ ab, ac, bc\} $.
The current along edge $e_1$ is 
\[ 
\frac{j^y_z(y) - j^y_z(z)}{a} = \frac{bc}{ab + ac + bc} ,
\]
according to \eqref{eq:current}.
We have
\begin{align*}
\potent{z}{y}(y) - \potent{z}{y}(z) 
&= a \left( \frac{bc}{ab + ac + bc} \right)
= \frac{abc}{ab + ac + bc} 
\end{align*}
in agreement with \eqref{eq:voltage};
the graph $G_0$ consists of three loop edges. 
Note the symmetry in $a,b,c$.
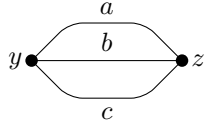
\begin{figure}[h]
\centering
{
	\begin{tikzpicture}
	\draw (-1,0) -- (1,0);
	\draw[rounded corners=5] (-1,0) -- (-0.5,-0.5) -- (0.5,-0.5) -- (1,0);
	\draw[rounded corners=5] (-1,0) -- (-0.5,0.5) -- (0.5,0.5) -- (1,0);
	
	\node at (-1,0) [circle,fill,scale=0.5] {};
	\node at (1,0) [circle,fill,scale=0.5] {};
	
	\path (-1,0) node[left] {$y$};
	\path (1,0) node[right] {$z$};
	
	\path (-0.5,0.5) -- (0.5,0.5) node[midway, above] {$a$};
	\path (-1,0) -- (1,0) node[midway,above] {$b$};
	\path (-0.5,-0.5) -- (0.5,-0.5) node[midway, below] {$c$};

	\end{tikzpicture}
}
\caption{Theta graph  with variable edge lengths.}
\label{fig:kirchhoff-ex1}
\end{figure}
\end{eg}

\section{Torsion points of the Jacobian}
\label{sec:mm-setup}

In this section, we define torsion equivalence on divisor classes.
We recall the definition of a very general subset of $\RR^n$ and give examples.

\subsection{Torsion equivalence}
Given an abelian group $A$,
the {\em torsion subgroup} $A_{\rm tors}$ 
is the set of elements $a\in A$ such that $na = a + \cdots + a =  0$ for some positive integer $n$.
For example, the torsion subgroup of $\RR/\ZZ$ is $\QQ/\ZZ$
and the torsion subgroup of $\RR$ is $\{0\}$.
Recall that  $\Jac(\Gamma)$ 
is the abelian group of degree $0$ divisor classes on $\Gamma$;
we have
\[
\Jtors{\Gamma} = \{[D]  : D\in \Div^0(\Gamma),\quad n[D] = 0\text{ for some }n \in \ZZ_{>0} \}.
\]

We say points $x,y \in \Gamma$ are {\em torsion equivalent}
if there exists a positive integer $n$ such that $n[x-y] = 0$ in $\Jac(\Gamma)$.
If two points $x,y$ represent the same divisor class $[x] = [y]$, then 
$x$ and $y$ are torsion equivalent;
hence this relation descends to a relation on $\Eff^1(\Gamma) = \{ [x] : x \in \Gamma\}$.
It will be convenient for us to consider this relation on 
$\Eff^1(\Gamma)$ rather than  on  $\Gamma$.
\begin{lem}
Torsion equivalence defines an equivalence relation on $\Eff^1(\Gamma)$.
\end{lem}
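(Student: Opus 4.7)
The plan is to first note that torsion equivalence is defined directly in terms of the abelian group structure on $\Jac(\Gamma)$, which makes the three axioms essentially formal. Concretely, I will verify reflexivity, symmetry, and transitivity on representatives in $\Gamma$ and then observe that each property is visibly independent of the choice of representative within a linear equivalence class.

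\textbf{Reflexivity.} For any $x \in \Gamma$, the divisor $x - x = 0$ is the zero divisor, so $1 \cdot [x-x] = 0$ in $\Jac(\Gamma)$; hence $[x] \sim [x]$.

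\textbf{Symmetry.} Suppose $n[x-y] = 0$ in $\Jac(\Gamma)$ for some positive integer $n$. Since $[y-x] = -[x-y]$ in the abelian group $\Jac(\Gamma)$, we have $n[y-x] = -n[x-y] = 0$, so $[y] \sim [x]$.

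\textbf{Transitivity.} Suppose $[x] \sim [y]$ and $[y] \sim [z]$, with witnesses $n[x-y] = 0$ and $m[y-z] = 0$ for positive integers $n,m$. Writing $[x-z] = [x-y] + [y-z]$ in $\Jac(\Gamma)$, multiplication by the positive integer $nm$ gives
\[
nm[x-z] = m \cdot n[x-y] + n \cdot m[y-z] = 0 + 0 = 0,
\]
so $[x] \sim [z]$.

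\textbf{Descent to $\Eff^1(\Gamma)$.} It remains to observe that the relation is well-defined on linear equivalence classes, which the preceding paragraph in the paper already implicitly asserts. If $[x] = [x']$ and $n[x-y] = 0$, then $[x'-y] = [x-y]$ in $\Jac(\Gamma)$, so $n[x'-y] = n[x-y] = 0$ and $[x'] \sim [y]$. This shows the relation descends to $\Eff^1(\Gamma)$, where the three properties above continue to hold. The proof is entirely formal; there is no real obstacle, as the argument only uses that $\Jac(\Gamma)$ is an abelian group and that torsion is preserved under negation, addition, and integer scaling.
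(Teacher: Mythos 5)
Your proof is correct and takes essentially the same approach as the paper: the paper dismisses reflexivity and symmetry as clear and gives exactly your transitivity argument ($mn[x-z] = mn([x-y]+[y-z]) = 0$). Your additional remarks on well-definedness on $\Eff^1(\Gamma)$ are harmless elaboration of what the paper treats as immediate.
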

\begin{proof}
It is clear that torsion equivalence is reflexive and symmetric.
Suppose $n, m$ are positive integers such that 
$n[x-y] = 0$ and $m[y-z] = 0$ in $\Jac(\Gamma)$.
Then $mn[x-z] = mn([x-y] + [y-z]) = 0$. 
This shows that torsion equivalence is transitive.
\end{proof}

It is natural to extend this relation to divisor classes of higher degree:
we say  effective classes $[D], [E] \in \Eff^d(\Gamma)$
are {\em torsion equivalent}
if $n[D - E] = 0$ for some positive integer $n$.
We call an equivalence class under this relation a {\em torsion packet}.

\begin{definition}
\label{dfn:torsion-packet}
Given  $[E] \in \Eff^d(\Gamma)$,
the {\em torsion packet} $\tpacket{E}$ is the set of effective divisor classes
torsion equivalent to $[E]$, i.e.
\[ 
\tpacket{E} = \{ [D] \in \Eff^d(\Gamma) \text{ such that }[D - E] \in \Jtors{\Gamma} \}. \qedhere
\]
\end{definition}

The terminology of torsion packets allows us to restate the Manin--Mumford condition in a basepoint-free manner.
\begin{prop}
\label{prop:mm-torsion-packet}
\hfill
\begin{enumerate}[(a)]
\item 
Given an effective divisor class $[D] \in \Eff^d(\Gamma)$,
there is a canonical bijection
\[
\tpacket{D} \quad\longleftrightarrow\quad \Abeljh{D}{d}(\Gamma^d) \cap \Jtors{\Gamma}
\]
where
$\Abeljh{D}{d} : \Gamma^d \to \Jac(\Gamma)$
is the Abel--Jacobi map \eqref{eq:abel-jacobi}.

\item 
A metric graph $\Gamma$ satisfies the {degree $d$ Manin--Mumford condition} 
if and only if every torsion packet of degree $d$ 
is finite.
\qedhere
\end{enumerate}
\end{prop}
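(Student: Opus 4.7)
The plan is to prove this essentially by unwinding the definitions. For part (a), I would define the candidate bijection
\[
\phi : \tpacket{D} \longrightarrow \Abeljh{D}{d}(\Gamma^d) \cap \Jtors{\Gamma},
\qquad
\phi([E]) = [E - D] .
\]
The map is well-defined: by definition of the torsion packet, $[E - D]$ lies in $\Jtors{\Gamma}$, and because $[E] \in \Eff^d(\Gamma)$ we may choose an effective representative $E = p_1 + \cdots + p_d$, whence $[E - D] = \Abeljh{D}{d}(p_1,\ldots,p_d)$ lies in the image.

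Next I would check that $\phi$ is a bijection. Injectivity is immediate: if $[E_1 - D] = [E_2 - D]$ in $\Pic^0(\Gamma)$ then $[E_1] = [E_2]$ in $\Pic^d(\Gamma)$. For surjectivity, any element of $\Abeljh{D}{d}(\Gamma^d) \cap \Jtors{\Gamma}$ has the form $[p_1 + \cdots + p_d - D]$ for some $(p_1, \ldots, p_d) \in \Gamma^d$; setting $E = p_1 + \cdots + p_d$, the class $[E]$ is effective of degree $d$ and $[E - D]$ is torsion, so $[E]$ lies in $\tpacket{D}$ and maps to the given element. This also shows the bijection is ``canonical'' in the sense that it depends only on the chosen representative $[D]$ and on no auxiliary data.

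For part (b), the implication proceeds directly from (a). Assume $\Gamma$ satisfies the degree $d$ Manin--Mumford condition. For any torsion packet, pick a representative $[D]\in \Eff^d(\Gamma)$; then by (a), the packet is in bijection with the finite set $\Abeljh{D}{d}(\Gamma^d) \cap \Jtors{\Gamma}$, hence finite. Conversely, assume every torsion packet of degree $d$ is finite. Given any $[D] \in \Eff^d(\Gamma)$, part (a) identifies $\Abeljh{D}{d}(\Gamma^d) \cap \Jtors{\Gamma}$ with $\tpacket{D}$, which is finite by hypothesis. So the Manin--Mumford condition holds.

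There is no real obstacle here; the statement is essentially a translation between two viewpoints on the same subset of $\Jac(\Gamma)$. The only point requiring a little care is to note that every equivalence class for the torsion-equivalence relation on $\Eff^d(\Gamma)$ actually \emph{is} of the form $\tpacket{D}$ for some $[D] \in \Eff^d(\Gamma)$, which is automatic from reflexivity. Once that is recorded, parts (a) and (b) drop out together in a few lines.
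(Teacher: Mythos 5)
Your argument is correct and follows essentially the same route as the paper: the paper writes part (a) as a chain of set identities $\tpacket{D} = \{[E] : [E-D]\in\Jtors{\Gamma}\} = \cdots = \Abeljh{D}{d}(\Gamma^d)\cap\Jtors{\Gamma}$, which is exactly the map $[E]\mapsto[E-D]$ you spell out as $\phi$, and part (b) is read off directly from (a) in both treatments. The only difference is presentational: you make the bijection and its inverse explicit, while the paper leaves the translation between $\Eff^d(\Gamma)$ and $\Jac(\Gamma)$ implicit in the final displayed identity.
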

\begin{proof}
For part (a),
we have
\begin{align*}
\tpacket{D} &= \{[E] \in \Eff^d(\Gamma) : [E - D] \in \Jtors{\Gamma} \} \\
&= \{ [x_1 + \cdots + x_d] \in \Eff^d(\Gamma) : [x_1 + \cdots + x_d - D] \in \Jtors{\Gamma} \} \\
&= \{ [x_1 + \cdots + x_d] \in \Eff^d(\Gamma) : \Abeljh{D}{d}(x_1,\ldots,x_d) \in \Jtors{\Gamma} \} \\
&= \Abeljh{D}{d}(\Gamma^d) \cap \Jtors{\Gamma} .
\end{align*}

Part (b) follows directly from (a) and the definitions.
\end{proof}

Recall that the potential function $\potent{y}{x}$ 
is the unique piecewise $\RR$-linear function 
satisfying
\[
\Divisor(\potent{y}{x}) = x - y
\qquad\text{and}\qquad 
\potent{y}{x}(y) = 0.
\]
\begin{lem}
\label{lem:torsion-rational-slope}
Suppose $x,y$ are two points on a metric graph $\Gamma$.
Then $[x-y]$ is torsion in the Jacobian of $\Gamma$ 
if and only if
all slopes of the potential function $j^x_y$ are rational.
\end{lem}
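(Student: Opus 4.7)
The plan is to translate the torsion condition in $\Jac(\Gamma)$ directly into a condition on $j^x_y$ via the defining property of the Laplacian and the uniqueness of potential functions. The key input is that $\PL_\ZZ(\Gamma)$ is characterized by having integer slopes, so torsion $n[x-y]=0$ corresponds to $n\cdot j^x_y$ being (up to an additive constant) a function in $\PL_\ZZ(\Gamma)$.

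For the reverse direction I would proceed as follows. Assume every slope of $j^x_y$ is rational. Since $j^x_y$ is piecewise linear with finitely many slope values (take any compatible model $(G,\ell)$), there is a common denominator $n \in \ZZ_{>0}$ such that $n\cdot j^x_y$ has all integer slopes. This means $n\cdot j^x_y \in \PL_\ZZ(\Gamma)$. Applying the Laplacian,
\[
\Divisor(n \cdot j^x_y) \;=\; n \cdot \Divisor(j^x_y) \;=\; n(x-y),
\]
so $n(x-y)$ lies in the image of $\PL_\ZZ(\Gamma) \to \Div(\Gamma)$, i.e.\ $n[x-y]=0$ in $\Pic(\Gamma)$. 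Since this class has degree zero, $n[x-y]=0$ in $\Jac(\Gamma)$, proving $[x-y]$ is torsion.

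For the forward direction, suppose $n[x-y] = 0$ in $\Jac(\Gamma)$ for some positive integer $n$. Then there exists $g \in \PL_\ZZ(\Gamma)$ with $\Divisor(g) = n(x-y)$. Consider $h := g - n\cdot j^x_y \in \PL_\RR(\Gamma)$. Then
\[
\Divisor(h) \;=\; \Divisor(g) - n\cdot \Divisor(j^x_y) \;=\; n(x-y) - n(x-y) \;=\; 0,
\]
and from the exact sequence displayed just after Theorem--Definition~\ref{dfn:unit-potential} (equivalently, since the kernel of $\Divisor: \PL_\RR(\Gamma) \to \Div_\RR(\Gamma)$ is the constant functions on the connected space $\Gamma$), $h$ is constant. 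Thus the slopes of $n\cdot j^x_y$ coincide with those of $g$, which are integers. Dividing by $n$, all slopes of $j^x_y$ lie in $\tfrac{1}{n}\ZZ \subset \QQ$.

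There is essentially no obstacle here: the whole statement is a repackaging of the basic exact sequence for the integer Laplacian. The one small point to be careful about is invoking connectedness of $\Gamma$ to conclude that a $\PL_\RR$ function with zero Laplacian is constant, which ensures the comparison between $g$ and $n\cdot j^x_y$ is valid up to an additive constant that does not affect slopes.
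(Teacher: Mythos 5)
Your proof is correct and is essentially the paper's argument. The paper treats this lemma as the $d=1$ special case of Lemma~\ref{lem:d-torsion-slope}, whose proof uses the same two ingredients you invoke: that any two $\PL_\RR$ solutions of $\Divisor(f)=D-E$ differ by a constant (so have identical slopes, by connectedness of $\Gamma$), and that torsion of order $n$ is equivalent to $n\cdot j^x_y$ having integer slopes, which clears denominators precisely when all slopes are rational.
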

The above lemma is the special case $d = 1$ of the following statement.

\begin{lem}
\label{lem:d-torsion-slope}
Suppose $D = x_1 + \cdots+ x_d$ and $E = y_1 + \cdots + y_d$
are effective divisors of degree $d$ on a metric graph $\Gamma$.
Let $f \in \PL_{\RR}(\Gamma)$ be a function satisfying
$\Divisor(f) = D - E$.
(Up to an additive constant, $f = \sum_{i=1}^d \potent{x_i}{y_i}$.)
\begin{enumerate}[(a)]
\item The divisor class $[D -E ] = 0$ 
if and only if all slopes of $f$ are integers.

\item The divisor class $[D-E]$ is torsion 
if and only if all slopes of $f$ are rational.
\qedhere
\end{enumerate}
\end{lem}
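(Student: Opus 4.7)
The plan is to deduce both parts from the exact sequence
\[
0 \longrightarrow \RR \longrightarrow \PL_\RR(\Gamma) \xrightarrow{\;\Divisor\;} \Div_\RR(\Gamma) \longrightarrow \RR \longrightarrow 0
\]
recorded in Section~\ref{subsec:real-laplacian}. The key fact I will use is that any two functions in $\PL_\RR(\Gamma)$ with the same Laplacian differ by an additive constant, so $f$ is determined by $D - E$ up to a constant; in particular the collection of slopes of $f$ is an invariant of the divisor $D - E$.

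For part (a), I would argue the two directions separately. If every slope of $f$ is an integer, then $f \in \PL_\ZZ(\Gamma)$ by definition, and $[D-E] = [\Divisor(f)] = 0$ in $\Pic(\Gamma)$. Conversely, if $[D-E] = 0$, choose $g \in \PL_\ZZ(\Gamma)$ with $\Divisor(g) = D - E$. Then $\Divisor(f - g) = 0$, so by the exact sequence $f - g$ is a constant; since $g$ has integer slopes, so does $f$.

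For part (b), I would reduce to part (a) by scaling. The condition that $[D-E]$ be torsion is by definition the existence of a positive integer $n$ with $[nD - nE] = 0$. Observe that $\Divisor(nf) = n(D - E) = (nD) - (nE)$, with $nD$ and $nE$ both effective of degree $nd$, so part (a) applied to $nf$ shows $[nD-nE] = 0$ if and only if every slope of $nf$ lies in $\ZZ$, i.e.\ every slope of $f$ lies in $\tfrac{1}{n}\ZZ$. Taking the disjunction over $n$, the existence of such an $n$ is equivalent to every slope of $f$ being rational, since the set of slopes is finite and rational numbers are precisely those with a common denominator.

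There is no real obstacle here beyond the bookkeeping: the content is entirely contained in the exact sequence of Section~\ref{subsec:real-laplacian} together with the observation that clearing denominators of finitely many rational slopes is always possible. The only point worth emphasizing in the write-up is that the set of slopes of a piecewise linear function is finite (because $f$ is linear on the edges of some combinatorial model), which is what allows the passage from ``every slope is rational'' to ``there exists a single $n$ making $nf$ have integer slopes.''
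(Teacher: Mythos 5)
Your proof is correct and follows essentially the same route as the paper: part (a) via the observation that all solutions to $\Divisor(f) = D - E$ differ by a constant (which you derive from the exact sequence and the paper states directly), and part (b) by scaling by $n$ and using the finiteness of the slope set to pass between ``torsion'' and ``rational slopes.'' No meaningful differences.
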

\begin{proof}
For part (a), the ``if'' direction is a restatement of the definition of linear equivalence
(Section~\ref{subsec:int-laplacian}).
The ``only if'' direction follows from fact that for fixed divisors $D$ and $E$,
all solutions to $\Divisor(f) = D - E$
(where $f \in \PL_\RR(\Gamma)$)
have the same slopes,
since they all differ by an additive constant.

Part (b) follows from part (a) by linearity of the Laplacian $\Divisor$;
more precisely,  
$[D-E]$ is torsion of order  $n$
if and only if $[n(D-E)] = [n\Divisor(f)] = [\Divisor(n\cdot f)] =  0$,
if and only if all slopes of $n\cdot f$ lie in $\ZZ$,
if and only if all slopes of $f$ lie in $\frac1{n}\ZZ$.
Conversely if all slopes of $f$ are rational, then there exists an integer $n$
such that all slopes of $f$ lie in $\frac1n \ZZ$,
since a piecewise linear function has finitely many slopes.
\end{proof}

\subsection{Very general subsets}
\label{subsec:very-general}
A {\em very general} subset of $\RR^n$ is one whose complement is 
contained in a countable union of distinguished Zariski-closed sets.
A distinguished Zariski-closed set is the set of zeros of a polynomial function
which is not identically zero.
Given a polynomial $f\in \RR[x_1,\ldots,x_n]$, we denote
\[ 
Z(f) = \{ (a_1,\ldots,a_n) \in \RR^n : f(a) = 0 \}
\quad\text{ and }\quad 
U(f) = \{ (a_1,\ldots,a_n) \in \RR^n : f(a) \neq 0 \} .
\]
In this notation, a very general subset $S\subset \RR^n$ is one which can be expressed as
\[ 
	S \supset \RR^n \setminus \left(\bigcup_{i\in I} Z(f_i) \right) = \bigcap_{i\in I} U(f_i) 
\]
where $I$ is a countable index set and each $f_i$ is a nonzero polynomial.
Note that the zero locus $Z(f)$ 
has Lebesgue measure zero if $f$ is nonzero. 
Thus the complement of a (measurable) very general subset of $\RR^n$ has Lebesgue measure zero.
However, it is still possible that the complement of a very general subset
is dense in $\RR^n$.

If $D \subset \RR^n$ is some parameter space 
with nonempty interior (with respect to the Euclidean topology),
we say that a subset of $D$ is {\em very general} if it has the form
$D \cap S$ for a very general subset $S \subset \RR^n$.
In our applications, the relevant parameter space will be 
the positive orthant $D = (\RR_{>0})^n$.
We say that a property holds for a {\em very general point} of some real parameter space 
if it holds on a very general subset.

\begin{eg}
\label{eg:very-general}
\hfill
\begin{enumerate}[(a)]
\item 
For a fixed nonconstant polynomial $f \in \ZZ[x_1,\ldots,x_n]$,
the set
\begin{equation}
\label{eq:f-irrational}
U(f-\QQ) =  \{(a_1,\ldots,a_n) \in \RR^n : f(a_1,\ldots,a_n) \not\in \QQ\}
\end{equation}
is very general,
since $\{ f - \lambda : \lambda \in \QQ\}$ 
is a countable collection of nonzero polynomials.

\item 
For polynomials $f,g \in \ZZ[x_1,\ldots,x_n]$ with $g\neq 0$ and $f/g$ nonconstant,
the set
\begin{equation}
\label{eq:fg-irrational}
U\Big(\frac{f}{g} - \QQ\Big) = \left\{(a_1,\ldots,a_n) \in \RR^n : \frac{f(a_1,\ldots,a_n)}{g(a_1,\ldots,a_n)} \not\in \QQ \right\} 
\end{equation}
is very general,
since $\{f - \lambda g : \lambda \in \QQ\}$
is a countable collection of nonzero polynomials.

\item 
The set 
\begin{equation}
\label{eq:transcendental}
 U^n_{\rm tr.} = \{(a_1,\ldots,a_n) \in \RR^n : 
 f(a_1,\ldots,a_n) \neq 0 
\text{ for every } 
f \in \ZZ[x_1,\ldots,x_n] \setminus\{0\} 
\} 
\end{equation}
is very general,
since $\ZZ[x_1,\ldots,x_n]$ is a countable set of polynomials.
We call $U^n_{\rm tr.}$
the set of {\em transcendental} points of $\RR^n$.
In particular, $U^1_{\rm tr}$ is the set of transcendental real numbers.
\qedhere
\end{enumerate}
\end{eg}
Note that in the above examples,
the subsets \eqref{eq:f-irrational} and \eqref{eq:fg-irrational}
contain the transcendental points $U^n_{\rm tr.}$. 
Conversely, $U^n_{\rm tr.}$ is the intersection of \eqref{eq:f-irrational}
over all choices of $f$ (resp. the intersection of \eqref{eq:fg-irrational} over all choices of $f$ and $g$).

\begin{rmk}
In the later theorem statements (Theorems~\ref{thm:manin-mumford} and \ref{thm:mm-higher-degree}) 
which concern very general edge lengths,
the condition ``for very general edge lengths'' can be replaced with the more specific condition ``for transcendental edge lengths,''
i.e. when the tuple of lengths $(\ell(e))_{e \in E}$ is in $U^{|E|}_{\rm tr.}$.
The condition can further be relaxed to say ``for edge lengths in a finite intersection of sets of the form \eqref{eq:fg-irrational},''
in which the polynomials $f,g$ come from Kirchhoff's formulas 
(see Theorem~\ref{thm:kirchhoff} in Section~\ref{subsec:kirchhoff}).
\end{rmk}

\section{Manin--Mumford conditions on metric graphs}
Recall that a metric graph $\Gamma$ 
satisfies the {\em Manin--Mumford condition}
if 
$\#( \Abelj{q}(\Gamma) \cap \Jtors{\Gamma}) < \infty$
for every $q\in \Gamma$,
where $\Abelj{q} : \Gamma \to \Jac(\Gamma)$ 
is the Abel--Jacobi map.
(See Section~\ref{subsec:int-laplacian} for definitions of $\Abelj{q}$ and $\Jac(\Gamma)$.)
Equivalently, $\Gamma$ satisfies the Manin--Mumford condition if every degree one torsion packet
$\tpacket{x}$
is finite.

A metric graph $\Gamma$ 
satisfies the {\em degree $d$ Manin--Mumford condition}
if 
$\#( \Abeljh{D}{d}(\Gamma^d) \cap \Jtors{\Gamma}) < \infty$
for every effective degree $d$ divisor class $[D]$. 
Here $\Abeljh{D}{d}: \Gamma^{d} \to \Jac(\Gamma) $ is the degree $d$ Abel--Jacobi map
\begin{align*}
\Abeljh{D}{d} (x_1,\ldots,x_d) &= [x_1 + \cdots + x_d - D].
\end{align*}
Equivalently, 
every degree $d$ torsion packet on $\Gamma$ is finite.

\subsection{Failure of Manin--Mumford condition}
In this section, we consider cases when a metric graph fails to satisfy the Manin--Mumford condition,
in degree one and in higher degree.

\begin{prop}[Observation~\ref{obs:intro-mm-rational}]
\label{prop:mm-rational}
If $\Gamma = (G,\ell)$ is a metric graph of genus $g \geq 1$ whose edge lengths are all rational,
then the Manin--Mumford condition fails to hold.
\end{prop}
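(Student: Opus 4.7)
Fix a vertex $q \in V(G)$ as basepoint. The plan is to produce infinitely many distinct torsion classes in $\Abelj{q}(\Gamma) \cap \Jtors{\Gamma}$, which contradicts the Manin--Mumford condition. The two key ingredients are Lemma~\ref{lem:torsion-rational-slope} and Kirchhoff's formula (Theorem~\ref{thm:kirchhoff}).

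First I would establish a \emph{rational-point torsion principle}: for any $x \in \Gamma$ lying at rational arc-length distance along some edge of $G$ from one of its endpoints, the class $[x-q]$ is torsion in $\Jac(\Gamma)$. Indeed, subdividing the edge of $G$ containing $x$ at the point $x$ yields a combinatorial model $(G',\ell')$ of $\Gamma$ in which both $q$ and $x$ are vertices and in which all edge lengths $\ell'(e')$ remain rational. By Kirchhoff's Theorem, each arc-length slope of the unit potential function $\potent{q}{x}$ on an edge of $G'$ is a ratio of homogeneous polynomials in $\{\ell'(e') : e' \in E(G')\}$ with integer coefficients, and hence is itself rational. Lemma~\ref{lem:torsion-rational-slope} then implies $[x-q] \in \Jtors{\Gamma}$.

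Next I would show the construction produces infinitely many pairwise distinct torsion classes. Since $g\geq 1$, the graph $G$ contains at least one cycle, so some edge $e \in E(G)$ is not a bridge; equivalently, $\{e\}$ has rank $1$ in the cographic matroid $M^\perp(G)$. Theorem~\ref{thm:cographic-dim} then says $\Eff(e) \subset \Pic^1(\Gamma)$ is $1$-dimensional, i.e.\ the restriction $\Abelj{q}|_e : e \to \Jac(\Gamma)$, $x \mapsto [x - q]$, is non-constant. By Mikhalkin--Zharkov's proof of the tropical Abel--Jacobi theorem, $\Abelj{q}|_e$ is affine linear on the edge $e$, and a non-constant affine-linear map on an interval is injective. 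The set of points on $e$ at rational arc-length distance from $e^-$ is countably infinite; each such point yields a torsion class by the rational-point principle, and injectivity of $\Abelj{q}|_e$ guarantees these classes are pairwise distinct. Therefore $\Abelj{q}(\Gamma) \cap \Jtors{\Gamma}$ is infinite and the Manin--Mumford condition fails.

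There is no deep obstacle in the argument. The two verifications deserving care are that slopes of $\potent{q}{x}$ computed via Kirchhoff's formula in the refined model $(G',\ell')$ genuinely agree with the arc-length slopes of $\potent{q}{x}$ as an element of $\PL_\RR(\Gamma)$, and that the affine-linear description of the Abel--Jacobi map is preserved under edge subdivision. Both are immediate since refining a combinatorial model does not alter the underlying metric graph, but they are worth making explicit in a full write-up.
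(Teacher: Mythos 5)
Your proof is correct, but it takes a genuinely different route from the paper's. The paper passes to a unit-edge-length model $(G^{(1)},\mathbf{1})$ (after clearing denominators) and invokes finiteness of the critical group to conclude that vertices of $G^{(1)}$ are pairwise torsion equivalent; it then takes the $k$-th equal subdivision $G^{(k)}$ and lets $k \to \infty$ to produce an unbounded supply of torsion-equivalent vertices. Your proof instead fixes a single non-bridge edge $e$ (which exists since $g \geq 1$) and produces infinitely many torsion points on $e$ directly: the rational-distance points on $e$ give torsion classes via Kirchhoff's formula (Theorem~\ref{thm:kirchhoff}) combined with the slope-rationality criterion (Lemma~\ref{lem:torsion-rational-slope}), and injectivity of $\Abelj{q}|_e$ follows from Theorem~\ref{thm:cographic-dim} since $\{e\}$ has cographic rank~$1$. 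Your approach has the advantage of handling distinctness of the torsion classes explicitly --- the paper's inequality $\#(\Abelj{q}(\Gamma) \cap \Jtors{\Gamma}) \geq \# V(G^{(k)})$ tacitly assumes the vertex classes $[v-q]$ are pairwise distinct, which is not automatic and is really justified by a linearity/injectivity observation of the sort you make. Your argument also avoids the rescaling step needed to reduce from rational to integer lengths. The paper's approach, on the other hand, showcases the critical group and subdivision structure, which are thematically central elsewhere in the paper, and it more directly anticipates the proof strategy of Lemma~\ref{lem:infinite-torsion} (which the paper notes subsumes this proposition as the $d=1$ case). Both proofs are sound.
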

\begin{proof}
First, suppose that all edge lengths are integers.
This means that after subdividing edges appropriately, $\Gamma$ has a combinatorial model $(G^{(1)},\textbf{1})$
with all unit edge lengths.
On a graph with unit edge lengths,
the degree-zero divisor classes 
supported on vertices form a finite abelian group, 
known as the {\em critical group} of the graph (see Section~\ref{subsec:critical-group}).
This implies that all vertices of $G^{(1)}$ lie in the same torsion packet, i.e. $\#( \Abelj{q}(\Gamma) \cap \Jtors{\Gamma}) \geq \# V(G^{(1)})$ for any vertex $q$.

Now consider taking the $k$-th subdivision graph $G^{(k)}$ of $G^{(1)}$,
meaning every edge of $G^{(1)}$ is subdivided into $k$ edges of equal length.
The same metric graph $\Gamma$ also has combinatorial model $(G^{(k)}, \frac1{k} \textbf{1})$.
The degree-zero divisor classes supported on vertices of $G^{(k)}$ also form a finite abelian group,
so these new vertices are also in the same torsion packet of $\Gamma$. Thus
\[
\#( \Abelj{q}(\Gamma) \cap \Jtors{\Gamma}) \quad\geq\quad \# V(G^{(k)}) = \# V(G^{(1)}) + (k-1) \# E(G^{(1)}).
\]
Taking $k\to \infty$ shows that  $\Gamma$ has an infinite torsion packet.

Now suppose that $\Gamma$ has rational edge lengths.
The edge set is finite (in any model) so after rescaling all lengths by the common denominator we obtain a metric graph with integer edge lengths.
If $\lambda: \Gamma \to \Gamma'$ denotes a map that rescales all edge lengths of $\Gamma$ by the same positive factor, 
then $\lambda$ induces a bijection
$\Abelj{q}(\Gamma) \cap \Jtors{\Gamma} \xrightarrow{\sim} \Abelj{\lambda(q)}(\Gamma') \cap \Jtors{\Gamma'}$.
Thus the validity of the Manin--Mumford condition is not changed under global rescaling, 
and the previous argument  may be applied.
\end{proof}

Proposition~\ref{prop:mm-rational} also follows from Lemma~\ref{lem:infinite-torsion} below, taking degree $d = 1$.
Recall that given edges $e_i \in E(G)$, $\Eff(e_1,\ldots, e_k)$ denotes
the set of effective divisor classes $[x_1 + \cdots + x_k]$ 
which sum a point $x_i \in e_i$ from each edge
($x_i$ is allowed to be an endpoint of $e_i$).
\begin{lem}
\label{lem:infinite-torsion}
Let $\Gamma = (G,\ell)$ be a metric graph.
If $\Eff(e_1,\ldots, e_d)$ contains distinct divisor classes 
$[D], [E]$
in the same degree $d$ torsion packet,
then the torsion packet 
$\tpacket{D}$
is infinite.
\end{lem}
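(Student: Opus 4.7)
The plan is to produce an infinite family of effective divisor classes in $\Eff(e_1,\ldots,e_d)$ all lying in the torsion packet $\tpacket{D}$, by linearly interpolating between $D$ and $E$ along the product of edges and invoking the linearity of the Abel--Jacobi map on each edge of $\Gamma$.

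First, I would write $D = x_1+\cdots+x_d$ and $E = y_1+\cdots+y_d$ with $x_i,y_i \in e_i$, parameterize each $e_i$ isometrically by $[0,\ell(e_i)]$, and for $t\in[0,1]$ set $z_i(t) := (1-t)x_i + t y_i \in e_i$ and $D_t := z_1(t)+\cdots+z_d(t)$, so that $D_0 = D$, $D_1 = E$, and $[D_t] \in \Eff(e_1,\ldots,e_d)$ for all $t \in [0,1]$.

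Second, as recalled in the discussion of the cellular decomposition of $\Jac(\Gamma)$, the Abel--Jacobi map is linear on each edge of $\Gamma$ when lifted to the universal cover $H^1(\Gamma,\RR) \cong \RR^g$ of $\Jac(\Gamma)$. Summing the contributions of the $d$ edges with a compatible choice of lifts, one obtains an affine map $\Phi : \prod_{i=1}^d e_i \to \RR^g$ such that the composition with $\RR^g \to \RR^g/\ZZ^g = \Jac(\Gamma)$ sends $(z_1,\ldots,z_d)$ to $[z_1+\cdots+z_d - D]$, normalized so that $\Phi(x)=0$. Setting $v := \Phi(y)$, affinity gives $\Phi(z(t)) = tv$, so $[D_t - D]$ is the image of $tv$ in $\Jac(\Gamma)$.

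Third, the assumption that $[E-D]$ is torsion translates into $v \in \QQ^g$, while $[D] \neq [E]$ gives $v \notin \ZZ^g$. Hence for every rational $t$, $tv \in \QQ^g$ represents a torsion class, i.e.\ $[D_t] \in \tpacket{D}$. Moreover $[D_t] = [D_s]$ iff $(t-s)v \in \ZZ^g$, and since $0 \neq v \in \QQ^g$ the solution set is a discrete subgroup $t_0\ZZ \subset \RR$ with $t_0 > 0$. Therefore the countably infinite set $[0,\min(1,t_0)) \cap \QQ$ injects into $\tpacket{D} \cap \Eff(e_1,\ldots,e_d)$, showing the torsion packet is infinite.

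The main point requiring care is the existence of the affine lift $\Phi$ in the second step: one must verify that a coherent choice of edge parameterizations and basepoint lifts assembles the individual per-edge linear lifts into a single affine map $\prod_i e_i \to \RR^g$ projecting correctly to $\Jac(\Gamma)$. This is essentially forced by additivity of the sum-of-points map combined with the fact that each $z_i(t)$ stays within its own edge $e_i$ (so no sheet of the covering is crossed during the interpolation), but it deserves explicit articulation.
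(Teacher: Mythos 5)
Your proof is correct and takes essentially the same approach as the paper's: interpolate along the segment from $[D]$ to $[E]$ inside the affine cell $\Eff(e_1,\ldots,e_d)$ and observe that rational affine combinations of the torsion class $[E-D]$ remain torsion. The paper argues directly in $\Pic^d(\Gamma)$ by appealing to convexity of the cell (writing $(a+b)[F]=a[D]+b[E]$ for a rational affine combination $[F]$), whereas you make the underlying affine structure explicit via the lift $\Phi$ to the universal cover $H^1(\Gamma,\RR)\cong\RR^g$ and a discreteness argument for distinctness --- a careful articulation of what the paper's brief appeal to convexity leaves implicit.
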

\begin{proof}
%
Since the cell $\Eff(e_1,\ldots,e_d)$ is convex,
it contains a line segment connecting $[D]$ and $[E]$;
this segment has positive length by the assumption $[D] \neq [E]$.
Moreover, for 
\[
	[F] = (\text{any rational affine combination  of $[D]$ and $[E]$ along this line}), 
\]
we claim that the class $[F - D]$ is torsion.
This claim suffices to guarantee
infinitely many divisor classes $[F]$
in the torsion packet $\tpacket{D}$.

To verify the claim,
suppose $a, b$ are positive integers such that
$(a + b)[F] = a [D] + b [E]$.
Then 
\[
	(a + b)[F - D] = (a[D] + b[E]) - (a + b)[D]
	= b [E - D].
\]
The assumption that $[D], [E]$ are in the same torsion packet means there exists a positive integer $n$ such that $n[E - D] = 0$.
It then follows from the displayed equation that $n(a + b)[F - D] = 0$.
This verifies the claim that $[F - D]$ is torsion.
\end{proof}

\begin{prop}[Observation~\ref{obs:intro-mm-girth}]
\label{prop:cycle-torsion}
Suppose $G$ has a cycle with exactly $d$ edges.
Then for any edge lengths $\ell : E(G) \to \RR_{>0}$, 
the metric graph $\Gamma = (G,\ell)$
fails to satisfy the degree~$d$ Manin--Mumford condition.
\end{prop}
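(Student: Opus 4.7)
The plan is to exhibit, inside the cell $\Eff(e_1,\ldots,e_d)$ corresponding to the edges of the assumed cycle $C$, an infinite one-parameter family of effective divisor classes all lying in a common torsion packet. By Proposition~\ref{prop:mm-torsion-packet}, this will imply that the degree-$d$ Manin--Mumford condition fails.

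Concretely, let $C$ have edges $e_1,\ldots,e_d$ and vertices $v_0,\ldots,v_{d-1}$, with $e_i$ running from $v_{i-1}$ to $v_i$ (indices mod $d$). Parametrize $e_i$ by arc length on $[0,\ell(e_i)]$, and for $\lambda\in[0,1)$ let $x_i(\lambda)\in e_i$ denote the point at fractional position $\lambda$, so that
\[
    D(\lambda) := x_1(\lambda) + \cdots + x_d(\lambda) \in \Eff(e_1,\ldots,e_d).
\]
The heart of the argument is the construction of a single explicit $f_\lambda\in\PL_\RR(\Gamma)$ with $\Divisor(f_\lambda)=D(\lambda)-D(0)$, obtained by setting $f_\lambda\equiv 0$ off $C$ and, on each $e_i$, declaring $f_\lambda$ to have slope $1-\lambda$ on $[0,\lambda\ell(e_i)]$ and slope $-\lambda$ on $[\lambda\ell(e_i),\ell(e_i)]$. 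A direct check confirms that $f_\lambda$ is continuous (the endpoint values on each $e_i$ are both $0$), that each interior point $x_i(\lambda)$ contributes $+1$ to $\Divisor(f_\lambda)$ from the slope jump $(1-\lambda)\to-\lambda$, and that at each vertex $v_{i-1}$ the two outgoing $C$-side slopes (namely $1-\lambda$ into $e_i$ and $+\lambda$ into $e_{i-1}$) sum to $1$, giving exactly the $-v_{i-1}$ contribution required; vertices outside $V(C)$ see no contribution since $f_\lambda$ vanishes in a neighborhood.

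For $\lambda\in\QQ\cap[0,1)$, all slopes of $f_\lambda$ lie in $\QQ$, so Lemma~\ref{lem:d-torsion-slope} gives $[D(\lambda)-D(0)]\in\Jtors{\Gamma}$. To conclude that the torsion packet $\tpacket{D(0)}$ is infinite, I will also verify that the classes $[D(\lambda)]$ are pairwise distinct for $\lambda\in\QQ\cap[0,1)$. Since any two PL functions with a given divisor differ by a constant, the slopes of any PL function realizing $D(\lambda_1)-D(\lambda_2)$ coincide with those of $f_{\lambda_1}-f_{\lambda_2}$, namely $\lambda_2-\lambda_1$ and $\lambda_2-\lambda_1\pm 1$. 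For distinct $\lambda_1,\lambda_2\in[0,1)$ these are never all integers, so no integer-slope representative exists, forcing $[D(\lambda_1)]\neq[D(\lambda_2)]$. (Alternatively, once two distinct classes in $\Eff(e_1,\ldots,e_d)$ are exhibited in the same torsion packet, Lemma~\ref{lem:infinite-torsion} already yields that the packet is infinite.)

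The main subtlety is the rigidity of the construction: $f_\lambda$ can be made to vanish identically off $C$ precisely because the points $x_i(\lambda)$ sit at the \emph{same fractional position} on every edge of $C$, which is exactly what makes the two outgoing slopes at each $v_{i-1}$ sum to the integer $+1$. Any perturbation of this symmetry would force $f_\lambda$ to be nontrivial on $\Gamma\setminus C$, and Kirchhoff's formulas (Theorem~\ref{thm:kirchhoff}) would generically introduce irrational slopes incompatible with Lemma~\ref{lem:d-torsion-slope}; the present argument sidesteps Kirchhoff entirely by localizing the construction to the cycle.
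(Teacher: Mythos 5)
Your proof is correct and takes essentially the same approach as the paper: construct a piecewise-linear zigzag function supported on the cycle $C$ with rational (but not integral) slopes, invoke Lemma~\ref{lem:d-torsion-slope} to deduce torsion, and conclude infinitude of the packet via convexity of the cell $\Eff(e_1,\ldots,e_d)$. The only cosmetic difference is that the paper fixes $\lambda = 1/2$ (the midpoint divisor) and then invokes Lemma~\ref{lem:infinite-torsion}, whereas you parametrize the entire segment by $\lambda$ and directly verify pairwise distinctness of the $[D(\lambda)]$; both routes appeal to the same underlying ideas and ingredients.
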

\begin{proof}
Let $C$ be a cycle in $G$ 
with edges $e_1,e_2,\ldots,e_d$
and vertices $v_1,v_2,\ldots,v_d$
in cyclic order,
where edge $e_i$ has endpoints $v_i$ and $v_{i+1}$ 
(and indices are taken modulo $d$).
Consider the effective divisors
$D = v_1 + \cdots + v_d$
and 
$E = x_1 + \cdots + x_d$
where $x_i$ is the midpoint on edge $e_i$.

To show that $[D-E]$ is torsion,
we construct a piecewise linear function $f$ 
with $\Divisor(f) = D - E$.
Let $f:\Gamma \to \RR$ be zero-valued outside of the cycle $C$,
and $f(v_i) = 0$ for each vertex (potentially required  by continuity of $f$).
On each edge $e_i$, 
let $f$ have slope $\frac{1}{2}$ in the directions away from $v_i$,
so that at the midpoint $f(x_i) = \frac14 \ell(e_i)$.
It is straightforward to verify that $\Divisor(f) = D - E$
as desired.

The set of slopes of $f$, consisting of $\{0, 1/2, -1/2\}$,
is rational but not integral.
By Lemma \ref{lem:d-torsion-slope},
this implies that 
$[D-E]$ is a nonzero, torsion divisor class.
Moreover, both $[D]$ and $[E]$ lie in the same cell
$\Eff(e_1,\ldots,e_d)$.
Then Lemma~\ref{lem:infinite-torsion}
implies that the torsion packet $\tpacket{D}$
is infinite, which violates the degree $d$ Manin--Mumford condition.
\end{proof}

\subsection{Uniform Manin--Mumford bounds}
In this section,
we show that 
if a metric graph satisfies
the  Manin--Mumford condition,
then in fact
the number of torsion points can be bounded 
uniformly in terms of the genus of $\Gamma$.

\begin{thm}[Theorem~\ref{thm:intro-conditional-mm}]
\label{thm:mm-uniform-bound}
Suppose $\Gamma$ is a metric graph of genus $g \geq 2$.
If $ \Abelj{q}(\Gamma)\cap \Jtors{\Gamma}$ is finite,
then 
\[ 
	\#(\Abelj{q}(\Gamma)\cap \Jtors{\Gamma}) \leq 3g-3. \qedhere
\]
\end{thm}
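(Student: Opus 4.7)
The plan is to reduce to the stable model of $\Gamma$ and then decompose the degree-one effective locus into cells $\Eff(e)$ indexed by the edges of the stable model, using Lemma~\ref{lem:infinite-torsion} to show that each such cell contributes at most one class to the torsion packet.

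First I would reduce to the case where $\Gamma$ is itself a stable metric graph. By Proposition~\ref{prop:stabilization} there is a canonical stable subgraph $\Gamma' = \stable(\Gamma)$ with a deformation retraction $r: \Gamma \to \Gamma'$. For any point $x \in \Gamma$ the segment between $x$ and $r(x)$ lies in a tree that is collapsed by $r$, which gives an explicit piecewise-$\ZZ$-linear function witnessing $[x] = [r(x)]$ in $\Pic^1(\Gamma)$; in particular $[q] = [r(q)]$. Combined with the Jacobian isomorphism of Proposition~\ref{prop:jacobian-stabilization}, this yields an equality of subsets
\[
\Abelj{q}(\Gamma)\cap \Jtors{\Gamma} \;=\; \Abelj{r(q)}(\Gamma')\cap \Jtors{\Gamma'}
\]
under the identification $\Jac(\Gamma) \cong \Jac(\Gamma')$. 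So it suffices to prove the bound under the assumption that $\Gamma$ is stable with stable model $(G, \ell)$.

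Second, by Proposition~\ref{prop:stable-edge-bound} the stable model satisfies $\# E(G) \leq 3g - 3$, so it remains to control how many torsion classes each edge can contribute. Every point $x \in \Gamma$ lies on some closed edge $e \in E(G)$, and then $[x] \in \Eff(e) \subset \Eff^1(\Gamma)$ by Definition~\ref{dfn:eff-cell}. Using Proposition~\ref{prop:mm-torsion-packet}(a) to identify $\Abelj{q}(\Gamma)\cap \Jtors{\Gamma}$ with the torsion packet $\tpacket{q}$, we obtain the containment
\[
\tpacket{q} \;\subseteq\; \bigcup_{e \in E(G)} \Eff(e).
\]

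Third, I would apply Lemma~\ref{lem:infinite-torsion} in degree $d = 1$: if for some edge $e$ the cell $\Eff(e)$ contains two distinct classes of $\tpacket{q}$, then $\tpacket{q}$ is infinite, contradicting the hypothesis. Thus each $\Eff(e)$ contributes at most one class, and
\[
\#(\Abelj{q}(\Gamma)\cap \Jtors{\Gamma}) \;=\; \# \tpacket{q} \;\leq\; \sum_{e \in E(G)} \#\bigl(\tpacket{q} \cap \Eff(e)\bigr) \;\leq\; \# E(G) \;\leq\; 3g - 3.
\]

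The only subtle step, rather than a genuine obstacle, is the stabilization reduction: one must check that passing to $\stable(\Gamma)$ preserves both the Abel--Jacobi image and the torsion subgroup in a compatible way, which reduces to the equality $[x] = [r(x)]$ in $\Pic^1$ combined with Proposition~\ref{prop:jacobian-stabilization}. After that reduction, the argument is a direct cell-by-cell application of Lemma~\ref{lem:infinite-torsion} together with the edge count for stable graphs.
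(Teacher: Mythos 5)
Your proof is correct and follows essentially the same route as the paper: reduce to the (semi)stable model via Propositions~\ref{prop:stabilization} and~\ref{prop:jacobian-stabilization}, bound the number of edges of a stable model by $3g-3$ via Proposition~\ref{prop:stable-edge-bound}, and apply Lemma~\ref{lem:infinite-torsion} in degree one to conclude that a finite torsion packet meets each cell $\Eff(e)$ in at most one class. The only cosmetic slip is calling $\stable(\Gamma)$ a \emph{stable} subgraph rather than a \emph{semistable} one, but your subsequent passage to the stable combinatorial model $(G,\ell)$ is exactly what the paper does, and your explicit verification that $[x]=[r(x)]$ in $\Pic^1(\Gamma)$ is a fine way to justify the identification $\Abelj{q}(\Gamma)\cap\Jtors{\Gamma}\cong\Abelj{r(q)}(\Gamma')\cap\Jtors{\Gamma'}$.
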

\begin{proof}
The deformation retraction $r : \Gamma \to \Gamma'$
from a metric graph to its stabilization induces an isomorphism on Jacobians
$\Jac(\Gamma) \xrightarrow{\sim} \Jac(\Gamma')$
(Proposition~\ref{prop:jacobian-stabilization}) 
and hence on $\Abelj{q}(\Gamma) \xrightarrow{\sim} \Abelj{r(q)}(\Gamma')$,
so we may assume that $\Gamma$ is semistable
and that $ (G,\ell)$ is a stable combinatorial model for $\Gamma$.
Proposition \ref{prop:stable-edge-bound} 
states that $\# E(G) \leq 3g-3$ since $G$ is stable.
%
Lemma \ref{lem:infinite-torsion} (where $[D]$ and $[E]$ have degree one) implies that 
 a finite torsion packet has at most one point on a given edge of $G$.
This proves that the size of a finite, degree $1$ torsion packet is at most $3g-3$.
By Proposition~\ref{prop:mm-torsion-packet}, we are done.
\end{proof}

We next generalize the above argument to 
the higher-degree case.

\begin{thm}[Theorem~\ref{thm:intro-conditional-higher}]
\label{thm:mm-higher-uniform-bound}
Let $\Gamma = (G,\ell)$
be a connected metric graph of genus $g\geq 2$.
If $\Gamma$ satisfies the Manin--Mumford condition in degree $d$,
then
\[ \#(\Abeljh{D}{d}(\Gamma^d)\cap \Jtors{\Gamma}) \leq \binom{3g-3}{d} . \qedhere\]
\end{thm}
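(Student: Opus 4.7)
The plan is to adapt the strategy used for the degree-one bound in Theorem~\ref{thm:mm-uniform-bound}, replacing the trivial edge-count with a count of independent sets in the cographic matroid, using the Gross--Shokrieh--T\'othm\'er\'esz cellular cover of $\Eff^d(\Gamma)$ from Theorem~\ref{thm:abks-deg-d}.

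First, I would reduce to the stable case. Since the hypothesis implies $d < g$ (if $d \geq g$, the degree $d$ Abel--Jacobi map is surjective onto $\Jac(\Gamma)$, whose torsion is infinite), Theorem~\ref{thm:abks-deg-d} applies. Let $r : \Gamma \to \Gamma' = \stable(\Gamma)$ be the stabilization retraction; by Proposition~\ref{prop:jacobian-stabilization} this induces an isomorphism $\Jac(\Gamma) \cong \Jac(\Gamma')$ which identifies torsion subgroups, and for any $(x_1,\ldots,x_d) \in \Gamma^d$ the class $[x_1 + \cdots + x_d - D]$ corresponds to $[r(x_1) + \cdots + r(x_d) - r(D)]$. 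Since $r$ is surjective on points, this gives a bijection
\[
	\Abeljh{D}{d}(\Gamma^d)\cap \Jtors{\Gamma} \;\longleftrightarrow\; \Abeljh{r(D)}{d}((\Gamma')^d)\cap \Jtors{\Gamma'},
\]
so it suffices to prove the bound for the stable model $(G,\ell)$ of $\Gamma'$, which satisfies $\# E(G) \leq 3g-3$ by Proposition~\ref{prop:stable-edge-bound}.

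Next, by Proposition~\ref{prop:mm-torsion-packet}(a) the quantity to bound is $\#\tpacket{D}$, and this torsion packet is finite by hypothesis (together with Proposition~\ref{prop:mm-torsion-packet}(b)). By Theorem~\ref{thm:abks-deg-d}, every class $[E] \in \tpacket{D} \subset \Eff^d(\Gamma)$ lies in some cell $\Eff(e_1,\ldots,e_d)$ indexed by an independent set $\{e_1,\ldots,e_d\}$ of size $d$ in the cographic matroid $M^\perp(G)$. By Lemma~\ref{lem:infinite-torsion}, since $\tpacket{D}$ is finite, each such cell contains \emph{at most one} class of $\tpacket{D}$. Therefore
\[
	\#\tpacket{D} \leq \#\{\, d\text{-element independent sets in } M^\perp(G) \,\} \leq \binom{\# E(G)}{d} \leq \binom{3g-3}{d},
\]
which is the desired bound.

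I do not anticipate a serious obstacle: the stabilization reduction is essentially the same as in the degree-one case (modulo checking that the symmetric power version of the correspondence works, which follows coordinate-wise), and the remaining content is essentially a bookkeeping step combining Theorem~\ref{thm:abks-deg-d} (to organize effective classes into edge-indexed cells) with Lemma~\ref{lem:infinite-torsion} (to control how many torsion classes each cell contributes). The only mildly delicate point is the reduction to the stable model, where one must verify that the effective locus and its cellular decomposition are transported compatibly under $r_*$; the cleanest way is to argue purely on the level of torsion packets via the Jacobian isomorphism, as above.
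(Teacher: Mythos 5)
Your proposal follows essentially the same route as the paper's own proof: reduce to the stable model via Proposition~\ref{prop:jacobian-stabilization} and Proposition~\ref{prop:stable-edge-bound}, cover $\Eff^d(\Gamma)$ by cells $\Eff(e_1,\ldots,e_d)$ indexed by size-$d$ independent sets of $M^\perp(G)$ using Theorem~\ref{thm:abks-deg-d}, and conclude via Lemma~\ref{lem:infinite-torsion} that a finite torsion packet meets each cell at most once, giving $\binom{\#E(G)}{d} \leq \binom{3g-3}{d}$. Your explicit remark that the hypothesis forces $d < g$ (so that Theorem~\ref{thm:abks-deg-d} applies) is a detail the paper leaves implicit, but otherwise the argument is the same.
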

\begin{proof}
The number $\#(\Abeljh{D}{d}(\Gamma^d)\cap \Jtors{\Gamma})$
does not change under replacing $\Gamma$ with its stabilization,
so we may assume $\Gamma$ is semistable
and $(G,\ell)$ is a stable model.
This means that the number of edges $\# E(G)$ is bounded above by $3g-3$.

The image of $\Abeljh{D}{d}(\Gamma^d)$ is homeomorphic to $\Eff^d(\Gamma)$.
(They differ by a translation sending $\Pic^d(\Gamma)$ to $\Pic^0(\Gamma)$.)
By Theorem~\ref{thm:abks-deg-d},
the maximal cells of  $\Eff^d(\Gamma)$
are indexed by 
independent sets of size $d$ in the cographic matroid $M^\perp(G)$.
The number of maximal cells is clearly bounded above by
$\binom{\#E(G)}{d}$,
the number of all size-$d$ subsets of edges.
Since we assumed $G$ is stable, we have
$\binom{\#E(G)}{d}\leq \binom{3g-3}{d}$.

From Lemma~\ref{lem:infinite-torsion},
we know that a finite degree $d$ torsion packet contains 
at most one element from a given maximal cell of $\Abeljh{D}{d}(\Gamma^d)$,
which finishes the proof.
\end{proof}

\section{Manin--Mumford for generic edge lengths}
In this section we prove that metric graphs with very general edge lengths
satisfy the Manin--Mumford condition, in degrees $d = 1,2,\ldots, \gamind-1$ where $\gamind$ denotes the independent girth of the graph.

\subsection{Degree one}

We first address when a metric graph 
satisfies the Manin--Mumford condition in degree one.
In this section, ``torsion packet'' will always mean 
a degree-one torsion packet (c.f. Definition~\ref{dfn:torsion-packet}).
Before addressing the general case, we demonstrate an example in small genus.

\begin{eg}
Let $G$ be the theta graph (see Figure~\ref{fig:kirchhoff-ex1}) with vertices $x,y$ and
edges $e_1,e_2,e_3$,
and consider the metric graph $\Gamma = (G,\ell)$ with edge lengths 
$a = \ell(e_1), b = \ell(e_2) , c = \ell(e_3)$.

If a torsion packet contains two points on $e_1$,
then Proposition~\ref{prop:torsion-edge-deletion}
implies that $[x-y]$ is torsion on the deleted subgraph 
$\Gamma_1 = \Gamma \backslash e_1$.
By Lemma~\ref{lem:torsion-rational-slope},
this would imply
the potential function which sends current from $x$ to $y$ 
on the subgraph $\Gamma_1$
has rational slopes.
We can compute these slopes directly:
 $\Gamma_1$ is a parallel combination of edges with lengths $b$ and $c$,
so the slope along $e_2$ is
$\frac{c}{b+c} $.
(This calculation also follows from Theorem~\ref{thm:kirchhoff}.)
To summarize:
\[
	(\text{some torsion packet contains $\geq 2$ points of $e_1$})
	\qquad\text{implies}\qquad 
	\frac{c}{b+c} \in \QQ.
\]
The contrapositive statement is that
\[
	\frac{c}{b+c} \not\in \QQ
	\qquad\text{implies}\qquad 
	(\text{every torsion packet contains at most one point of $e_1$}).
\]

To satisfy the Manin--Mumford condition,
it suffices that every torsion packet $\tpacket{x} \subset \Eff^1(\Gamma)$
contains at most one point of each edge $e_1, e_2,e_3$.
Thus the Manin--Mumford condition holds for $\Gamma$
if the edge lengths are in the set
\[ 
	\left\{ (a,b,c) \in \RR_{>0}^3 : \frac{b}{a + b} \not\in \QQ 
\text{ and } \frac{c}{a + c} \not\in \QQ
\text{ and }\frac{c}{b+c} \not \in \QQ \right\} .
\]
This is a very general subset of $\RR_{>0}^3$, c.f. Example~\ref{eg:very-general} (b).
\end{eg}

\begin{prop}
\label{prop:torsion-edge-deletion}
Suppose $\Gamma$ is a metric graph and points $x,y\in \Gamma$ lie on the same non-bridge edge.
Let $\Gamma_0$ denote the metric graph with the open segment between $x$ and $y$ removed.
If $[x-y]$ is torsion on $\Gamma$,
then $[x-y]$ is torsion on $\Gamma_0$.
\end{prop}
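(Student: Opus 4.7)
The plan is to reduce the claim to a statement about rational slopes via Lemma~\ref{lem:torsion-rational-slope}, then compare the potential function on $\Gamma$ with its analogue on $\Gamma_0$ and show they differ only by a rational rescaling.

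First I would pick a combinatorial model of $\Gamma$ in which $x$ and $y$ are vertices, so the open segment between them is a single edge $e_{xy}$; this model is also compatible with $j := j^x_y$. Since $e$ is not a bridge in $\Gamma$, the graph $\Gamma_0$ obtained by deleting the interior of $e_{xy}$ is connected, so $e_{xy}$ is not a bridge in this refined model either. Let $\sigma$ denote the slope of $j$ along the segment from $x$ to $y$.

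Next I would compute $\Divisor_{\Gamma_0}(j|_{\Gamma_0})$. At every vertex other than $x,y$ the set of incident edges is unchanged, so the contribution matches that in $\Divisor_\Gamma(j)$, namely zero. At $x$, removing the outgoing slope $\sigma$ toward $y$ changes the sum of outgoing slopes from $-1$ to $-1-\sigma$, giving coefficient $1+\sigma$; symmetrically, the coefficient at $y$ becomes $-(1+\sigma)$. Hence
\[
  \Divisor_{\Gamma_0}(j|_{\Gamma_0}) = (1+\sigma)(x-y) .
\]
Provided $1+\sigma \neq 0$, the unit potential on $\Gamma_0$ satisfies $j^x_y|_{\Gamma_0} = \tfrac{1}{1+\sigma}\, j|_{\Gamma_0}$ up to an additive constant, so its slopes are $\tfrac{1}{1+\sigma}$ times those of $j$ on edges of $\Gamma_0$.

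The only obstacle is ensuring $1+\sigma > 0$, so that the rescaling is meaningful. This is where the slope-current principle (Proposition~\ref{prop:slope-current}) intervenes: it bounds $|\sigma| \leq 1$, with equality attained only when the carrying edge is a bridge---but we have arranged that $e_{xy}$ is not a bridge. Hence $|\sigma|<1$ and $1+\sigma>0$. Combining: if $[x-y]$ is torsion on $\Gamma$, then by Lemma~\ref{lem:torsion-rational-slope} every slope of $j$ is rational, in particular $\sigma\in\QQ$, so $\tfrac{1}{1+\sigma}\in\QQ$; the rescaled function $j^x_y|_{\Gamma_0}$ then has rational slopes, and Lemma~\ref{lem:torsion-rational-slope} applied on $\Gamma_0$ concludes that $[x-y]$ is torsion on $\Gamma_0$.
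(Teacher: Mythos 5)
Your proof is correct and follows essentially the same strategy as the paper: reduce torsion to rationality of slopes via Lemma~\ref{lem:torsion-rational-slope}, restrict the potential to $\Gamma_0$, observe the restriction has Laplacian a scalar multiple of $x-y$, and conclude by rationality of the rescaling factor. The only (harmless) differences are notational---you work with $j^x_y$ where the paper uses $j^y_x$, so your factor $1+\sigma$ is the paper's $1-s$---and you make the slope--current argument for $1+\sigma>0$ slightly more explicit than the paper's terse ``since $\Gamma_0$ is connected, $s<1$.''
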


%

\begin{proof}
Suppose $[x-y]$ is torsion on $\Gamma$.
Let $\potent{x}{y}$ denote the 
potential function on $\Gamma$ when one unit of current is sent from $y$ to $x$ (see Definition~\ref{dfn:unit-potential}).
By Lemma~\ref{lem:torsion-rational-slope},  all slopes of $\potent{x}{y}$ are rational.
In particular, the slope of $\potent{x}{y}$ on the segment between $x$ and $y$ is rational; 
let  $s$ denote this slope.
Let $\Gamma_0$ denote the metric graph obtained from $\Gamma$ by deleting the open segment between $x$ and $y$.
Then the restriction of $\potent{x}{y}$ to $\Gamma_0$ has Laplacian 
$\Divisor(\potent{x}{y}\big|_{\Gamma_0})= (1-s)y - {(1-s)}x$.
Since $\Gamma_0$ is connected, we have $s < 1$.

Let $\potent{x,0}{y}$ denote the potential function on $\Gamma_0$
when one unit of current is sent from $y$ to $x$.
Since $\potent{x}{y}\big|_{\Gamma_0} = (1-s)\potent{x,0}{y}$, 
all slopes of $\potent{x,0}{y}$ are rational.
By Lemma~\ref{lem:torsion-rational-slope}, this implies $[x-y]$ is torsion on $\Gamma_0$ as desired.
\end{proof}

\begin{prop}
\label{prop:slope-01}
Suppose $x,y$ are two vertices on a graph $G$.
Let $\potent{x}{y}$ be the potential function on $\Gamma = (G,\ell)$, 
depending on variable edge lengths $\ell : E(G) \to \RR$.
Either:

(1) all slopes of $\potent{x}{y}$ are $1$ or $0$, independent of edge lengths;
or

(2) for some edge $e$, the slope of $\potent{x}{y}$ along $e$ is a 
non-constant rational function of the edge lengths. 
\end{prop}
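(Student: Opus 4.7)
The plan is to apply Kirchhoff's formula (Theorem~\ref{thm:kirchhoff}(a)) directly and then analyze when the resulting rational function of the edge lengths is constant. For any edge $e \in E(G)$ with chosen orientation $\vec e$, the slope of $\potent{x}{y}$ along $\vec e$ is
\[
s_{e}(\ell) \;=\; \frac{P_{e}(\ell)}{Q(\ell)}
\;=\; \frac{\sum_{T \in \trees(G)} \sgn(T,y,x,\vec e)\, w(T)}{\sum_{T \in \trees(G)} w(T)},
\]
where $w(T) = \prod_{e' \notin E(T)} \ell(e')$. The denominator $Q$ is a nonzero polynomial since $G$ admits at least one spanning tree. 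If $s_e$ is a non-constant function of $\ell$ for some edge $e$, we are already in case~(2), so for the rest of the argument assume each $s_e$ is a constant $c_e \in \RR$.

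The essential combinatorial input is that the monomials $\{w(T) : T \in \trees(G)\}$ are pairwise distinct in $\RR[\ell(e) : e \in E(G)]$. Indeed, a spanning tree $T$ is determined by its cotree $E(G)\setminus E(T)$, and the cotree is read off from the set of variables appearing in $w(T)$; distinct trees therefore produce distinct squarefree monomials. In particular, $\{w(T)\}_{T \in \trees(G)}$ is a linearly independent set, so the polynomial identity $P_e = c_e Q$ can be compared monomial-by-monomial to yield
\[
\sgn(T,y,x,\vec e) \;=\; c_e \qquad \text{for every } T \in \trees(G).
\]
Because $\sgn(T,y,x,\vec e) \in \{-1, 0, +1\}$, this forces $c_e \in \{-1, 0, +1\}$, hence $|c_e| \in \{0,1\}$. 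Choosing the orientation $\vec e$ so that $c_e \geq 0$ (which is possible precisely because $c_e$ is a constant independent of $\ell$) gives $c_e \in \{0,1\}$, which is case~(1).

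The one substantive step is the linear independence of $\{w(T)\}$, and this reduces to the elementary observation that distinct spanning trees have distinct edge complements. I do not anticipate any real obstacle: once Kirchhoff's rational formula is on the table, the rest is coefficient bookkeeping on a linearly independent set of monomials, and the only mild subtlety is the orientation convention used to pass from $c_e \in \{-1,0,+1\}$ to $c_e \in \{0,1\}$.
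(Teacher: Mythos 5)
Your proof is correct, and it takes a genuinely different route from the paper's. The paper first does a case split on the number of simple paths from $x$ to $y$: with a unique path it writes down $\potent{x}{y}$ explicitly (slope $1$ on the path, $0$ elsewhere), and with two distinct paths it uses asymptotic arguments (holding one path fixed and letting other edge lengths tend to infinity, then letting $\ell(e)$ tend to infinity) to see the slope across some edge is non-constant, invoking Kirchhoff's formula only at the end to conclude rationality. You instead work entirely inside Kirchhoff's formula and use the algebraic observation that the spanning-tree monomials $w(T)$ are distinct squarefree monomials in the variables $\ell(e)$, hence linearly independent; equating coefficients in $P_e = c_e Q$ then forces $\sgn(T,y,x,\vec e) = c_e$ for every spanning tree $T$, so $c_e \in \{-1,0,1\}$. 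What your approach buys is a shorter, more self-contained argument with no limit computations and no separate path analysis; it also extracts a sharper combinatorial fact from case (1), namely that every spanning tree's $y$--$x$ path crosses $e$ with the same sign, which is precisely the ``path of bridges'' characterization the paper notes as a remark after the proposition. What the paper's approach buys is a more visible geometric picture of when each case occurs. Both proofs ultimately rest on Kirchhoff's theorem, so the difference is one of how much structure is extracted from it.
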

\begin{proof}
Suppose there is a unique simple path in $G$ from $x$ to $y$.
Let $f$ be the piecewise linear function on $\Gamma$
which has $f(x) = 0$,
increases with slope $1$ along the path from $x$ to $y$,
and has slope $0$ elsewhere.
Then  $f$ satisfies $\Divisor(f) = y - x$
so we must have $\potent{x}{y} = f$ by uniqueness.
Thus we are in case (1).

On the other hand, suppose there are two distinct simple paths $\pi_1, \pi_2$ 
in $G$ from $x$ to $y$. 
Let $e$ be an edge of $G$ which lies on $\pi_1$ but not $\pi_2$.
If we fix the lengths of edges in $\pi_1$ and send all other edge lengths to infinity,
then the slope of $\potent{x}{y}$ along $e$ approaches $1$.
If we send the length $\ell(e)$ to infinity while keeping all other edge lengths fixed, 
then the slope of $\potent{x}{y}$ along $e$ approaches zero.
Thus the slope of $\potent{x}{y}$ along $e$ is a non-constant function of the edge lengths.
By Kirchhoff's formulas, Theorem~\ref{thm:kirchhoff}, the slope 
is a rational polynomial function of the edge lengths.
This is case (2).
\end{proof}

\begin{prop}
\label{prop:vertex-torsion}
Suppose $x,y$ are two vertices on a graph $G$.
Then for the metric graph $\Gamma = (G,\ell)$, either

(1) $[x-y] = 0$ in $\Jac(\Gamma)$ for any edge lengths $\ell$,
or

(2) $[x-y]$ is non-torsion in $\Jac(\Gamma)$ for very general edge lengths $\ell$.
\end{prop}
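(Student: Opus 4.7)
The plan is to combine Lemma~\ref{lem:torsion-rational-slope} (torsion is detected by rationality of slopes of $\potent{x}{y}$) with the dichotomy supplied by Proposition~\ref{prop:slope-01} (either the slopes are identically $0$/$1$, or some slope is a non-constant rational function of the edge lengths), and then use the ``very general'' machinery from Example~\ref{eg:very-general}(b) to rule out rational values.

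First, I would invoke Proposition~\ref{prop:slope-01} applied to the vertices $x,y$. In its case~(1), every slope of $\potent{x}{y}$ equals $0$ or $1$ for every choice of $\ell$. By Lemma~\ref{lem:d-torsion-slope}(a), the divisor $\Divisor(\potent{x}{y}) = y - x$ is then principal, so $[x-y] = 0$ in $\Jac(\Gamma)$ for every edge length function. This is conclusion~(1) of the proposition.

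Next I would handle case~(2) of Proposition~\ref{prop:slope-01}: there is an edge $e$ along which the slope $s_e$ of $\potent{x}{y}$ is a non-constant rational function of $\ell$. Kirchhoff's Theorem (Theorem~\ref{thm:kirchhoff}) expresses $s_e$ as a ratio $f(\ell)/g(\ell)$ of polynomials in the edge length variables with $g \not\equiv 0$, and by the non-constancy assumption $f/g$ is not a constant rational function. Hence for every $\lambda \in \QQ$ the polynomial $f - \lambda g$ is nonzero (otherwise $f/g \equiv \lambda$). By Example~\ref{eg:very-general}(b), the set
\[
 U\!\left(\frac{f}{g} - \QQ\right) = \left\{\ell \in \RR_{>0}^{E(G)} : \frac{f(\ell)}{g(\ell)} \notin \QQ\right\}
\]
is very general in $\RR_{>0}^{E(G)}$. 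For every $\ell$ in this set, $s_e \notin \QQ$, so not all slopes of $\potent{x}{y}$ are rational, and Lemma~\ref{lem:torsion-rational-slope} gives that $[x-y]$ is non-torsion. This is conclusion~(2).

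This is essentially a clean bookkeeping argument and I do not anticipate a serious obstacle: the real content has already been packaged into Proposition~\ref{prop:slope-01} (the dichotomy of slope behavior) and Lemma~\ref{lem:torsion-rational-slope} (the slope/torsion dictionary). The one subtlety to verify carefully is that ``non-constant rational function'' in Proposition~\ref{prop:slope-01} truly means that $f/g$ is not a constant function, so that $f - \lambda g$ is a nonzero polynomial for each $\lambda \in \QQ$; this is needed to apply Example~\ref{eg:very-general}(b). Given the explicit Kirchhoff formulas in Theorem~\ref{thm:kirchhoff}, this is immediate, and so the argument goes through.
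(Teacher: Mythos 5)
Your proof is correct and follows essentially the same path as the paper: apply Proposition~\ref{prop:slope-01} to obtain the dichotomy, in case~(1) deduce $[x-y]=0$ since the slopes are integers, and in case~(2) use Kirchhoff (already embedded in Proposition~\ref{prop:slope-01}) together with Example~\ref{eg:very-general}(b) and Lemma~\ref{lem:torsion-rational-slope} to show the non-torsion locus is very general. The only cosmetic difference is that you explicitly invoke Lemma~\ref{lem:d-torsion-slope}(a) for the $[x-y]=0$ step, which the paper leaves implicit.
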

\begin{proof}
If none of the slopes of $\potent{x}{y}$  vary as a function of edge lengths,
then by Proposition \ref{prop:slope-01} 
all slopes of $\potent{x}{y}$ are zero or one.
This implies that $[x-y]=0$.

On the other hand, 
suppose for some edge $e$
  the slope of $\potent{x}{y}$ along $e$ is a non-constant rational function 
$\frac{p(\ell_1,\ldots,\ell_m)}{q(\ell_1,\ldots,\ell_m)}$.
Then the subset
\[ 
U = \left\{ (\ell_1,\ldots, \ell_m) \in \RR_{>0}^m  : 
\frac{p(\ell_1,\ldots,\ell_m)}{q(\ell_1,\ldots,\ell_m)} \not\in \QQ \right\}
\]
parametrizing edge-lengths
where the slope at $e$ take irrational  values
is very general, c.f. Example~\ref{eg:very-general} (b).
By Lemma~\ref{lem:torsion-rational-slope}, 
$[x-y]$ is nontorsion on $U$, as desired.
\end{proof}

\begin{rmk}
In Propositions~\ref{prop:slope-01} and \ref{prop:vertex-torsion}, we have case (1) if and only if $x$ and $y$ are connected by a path of bridge edges.
\end{rmk}

\begin{thm}[Theorem~\ref{thm:intro-manin-mumford}]
\label{thm:manin-mumford}
Suppose $G$ is a biconnected metric graph of genus $g\geq 2$.
For a very general choice of edge lengths
$\ell : E(G) \to \RR_{>0}$,
the metric graph $\Gamma = (G,\ell)$
satisfies the Manin--Mumford condition.
\end{thm}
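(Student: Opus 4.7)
The plan is to reduce the statement to an edge-by-edge question about divisor classes on subgraphs. By Lemma~\ref{lem:infinite-torsion} applied to degree-one cells $\Eff(e_i)$, it suffices to show that for very general edge lengths, no torsion packet of $\Gamma$ contains two distinct points lying on the same edge of $G$.

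First I would fix an edge $e$ with endpoints $u, v$ and use biconnectivity to ensure $e$ is non-bridge, so that Proposition~\ref{prop:torsion-edge-deletion} applies. If $x \neq y$ lie on $e$ with $[x-y]$ torsion on $\Gamma$, then $[x-y]$ is also torsion on the metric graph $\Gamma_0$ obtained by deleting the open arc between $x$ and $y$. The potential function on $\Gamma_0$ realizing the divisor $x - y$ has slope $\pm 1$ on each of the two stubs $ux$ and $yv$, since one unit of current must pass through each stub; by Lemma~\ref{lem:torsion-rational-slope}, this potential has all rational slopes if and only if the potential on $\Gamma \setminus e$ realizing the divisor $v - u$ does. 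Crucially, the resulting condition depends only on the edge $e$, not on the choice of $x, y \in e$. So the problem reduces to showing that for each edge $e = uv$ of $G$, the class $[u - v]$ is non-torsion on $\Gamma \setminus e$ for a very general choice of edge lengths.

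Next I would apply Proposition~\ref{prop:vertex-torsion} to the pair of vertices $u, v$ on $\Gamma \setminus e$: either $[u-v] = 0$ identically in the edge lengths, or $[u-v]$ is non-torsion on a very general subset. The hard part will be ruling out the first alternative. By Proposition~\ref{prop:slope-01} and the remark following Proposition~\ref{prop:vertex-torsion}, the first case occurs precisely when $u$ and $v$ are joined in $G \setminus e$ by a unique simple path $P$, consisting necessarily of bridge edges of $G \setminus e$. I plan to rule this out by a short case analysis using biconnectivity of $G$: any nontrivial subgraph $H \subset G \setminus e$ not contained in $P$ attaches to $P$ at some vertex $w$; if $w$ is interior to $P$, then $w$ becomes a cut vertex of $G$ (the edge $e$ cannot reconnect $H$ to the opposite side of $w$); if $H$ is attached at only one of $u, v$, then that endpoint becomes a cut vertex; and if $H$ is attached at both $u$ and $v$, it yields a second simple $u$-$v$ path in $G \setminus e$, contradicting uniqueness of $P$. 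Hence no such $H$ exists, so $G \setminus e = P$ and $G$ itself is a single cycle of genus one, contradicting $g \geq 2$.

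The proof concludes by intersecting, over the finitely many edges $e \in E(G)$, the very general subsets of $\RR_{>0}^{E(G)}$ on which $[u - v]$ is non-torsion in $\Jac(\Gamma \setminus e)$. On this common very general subset no torsion packet of $\Gamma$ contains two distinct points of any single edge, so Lemma~\ref{lem:infinite-torsion} forces every torsion packet to be finite, establishing the Manin--Mumford condition. The main obstacle is the topological case analysis in the previous paragraph; once it is in hand, the remaining steps are a direct assembly of ingredients developed in the preceding sections.
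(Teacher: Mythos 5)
Your proposal is correct and follows essentially the same route as the paper's proof: reduce edge-by-edge via Propositions~\ref{prop:torsion-edge-deletion} and~\ref{prop:vertex-torsion} to showing $[u-v]$ is non-torsion on $\Gamma \setminus e$ for each edge $e = uv$, then rule out the degenerate case $[u - v] = 0$ using biconnectivity together with $g \geq 2$. Your version spells out the stub-slope reduction and gives a more hands-on case analysis in the biconnectivity step (where the paper simply observes that a unique $u$--$v$ path $\pi$ in $G \setminus e$ would make $\pi \cup \{e\}$ a genus-one biconnected component), but the structure, the key lemmas, and the final intersection over edges are the same.
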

\begin{proof}

Let $m = \# E(G)$ and choose an ordering $E(G) = \{e_1, e_2, \ldots, e_m\}$,
which induces a homeomorphism 
from the space of edge lengths
$\{ \ell: E(G) \to \RR_{>0}\}$
to the positive orthant
 $\RR_{>0}^m$.
We claim that for each edge $e_i$,
there is a corresponding very general subset $U_i \subset \RR_{>0}^m$
such that 
\begin{equation}
\label{eq:torsion-ei-condition}
\begin{gathered}
\text{ when edge lengths are chosen in $U_i$,
every torsion packet } \\
\text{ of $\Gamma = (G,\ell)$
contains at most one point of $e_i$.}
\end{gathered}
\end{equation}

Let $e_{i}^+$, $e_{i}^-$ denote the endpoints of $e_i$,
and let $G_i = G \backslash e_i$ denote
the graph with the interior of $e_i$ deleted.
If the endpoints $e_{i}^+$, $e_{i}^-$
are not connected by any path in $G_i$,
this contradicts our assumption that $G$ is biconnected.
If the endpoints are connected by only one path $\pi$ in $G_i$,
then the union $\pi \cup \{e_i\}$ is a genus $1$ biconnected 
component of $G$,
which contradicts our assumption that $G$ is biconnected and has genus $g\geq 2$.
Thus $e_{i}^+$, $e_{i}^-$ are connected by
at least two distinct paths in $G_i$.
(The distinct paths may share some edges in common.)

Therefore, the divisor class 
$[e_{i}^+ - e_{i}^-] \neq 0$ in $\Jac(\Gamma_i)$
where $\Gamma_i = (G_i , \ell_i)$.
By Proposition \ref{prop:vertex-torsion},
$[e_{i}^+ - e_{i}^-] $ is nontorsion in $\Jac(\Gamma_i)$
on a very general subset 
$V_i \subset \RR_{>0}^{m-1}$
of edge-length space. (Note  that $G_i$ has $m-1$ edges.)
Finally, we let $U_i$ be the preimage of $V_i$ under the coordinate projection
$\RR_{>0}^m \to \RR_{>0}^{m-1}$ forgetting coordinate $i$.
The subset $U_i$ is very general,
and satisfies the claimed condition \eqref{eq:torsion-ei-condition}.

For any edge lengths in the 
intersection $U = \bigcap_{i=1}^m U_i$
a torsion packet of the corresponding $\Gamma = (G,\ell)$
can have at most one point on each edge $e_i$,
giving the bound
$ \# \tpacket{x} \leq m .$
The subset $U$ is very general, since it is a finite intersection of very general subsets.
This completes the proof.
\end{proof}

\subsection{Higher degree}

In this section we address when a metric graph with very general edge lengths 
satisfies the Manin--Mumford condition in higher degree ($d \geq 2$).

The next proposition is a strengthening of Proposition~\ref{prop:cycle-torsion}.
Recall that $M^\perp(G)$ denotes the cographic matroid of $G$
(see Section~\ref{subsec:matroids}).
\begin{prop}
\label{prop:gamind-cycle-torsion}
Suppose $G$ contains a cycle $C$
whose edge set has rank $d = {\rm rk}^\perp(E(C))$ 
in the cographic matroid $M^\perp(G)$.
Then for any edge lengths $\ell : E(G) \to \RR_{>0}$, 
the metric graph $\Gamma = (G,\ell)$
fails to fulfill the degree $d$ Manin--Mumford condition.
\end{prop}
\begin{proof}
Suppose the given cycle of $G$ consists of edges
$\{e_1, \ldots, e_k\}$ 
and vertices $\{v_1,\ldots, v_k\}$ in cyclic order;
note that $k\geq d$.
Let $D = v_1 + \cdots + v_k$
be the sum of the cycle's vertices.
In the proof of Proposition~\ref{prop:cycle-torsion},
we showed that the degree $k$ torsion packet $\tpacket{D}$ is infinite, 
since it has infinite intersection with
the cell $\Eff(e_1,\ldots, e_k)$,
for any choice of edge lengths $\ell$.
Our goal is to produce a torsion packet of degree $d$ which is infinite.

Recall that $\Eff(e_1,\ldots, e_k)$ is the image of 
$\Div(e_1,\ldots, e_k)$
under the 
linear equivalence map $\Div^k (\Gamma) \to \Pic^k(\Gamma)$.
By Theorem~\ref{thm:cographic-dim},
$\Eff(e_1,\ldots,e_k)$
has dimension $d = {\rm rk}^\perp(\{e_1,\ldots,e_k\})$.
This implies that 
$\Eff(e_1,\ldots,e_k)$ is covered by the images of the $d$-dimensional faces of $\Div(e_1,\ldots,e_k)$ 
under the linear equivalence map.
The $d$-dimensional faces of $\Div(e_1,\ldots,e_k)$ have the form
\begin{equation}
\label{eq:d-face}
 \Eff(e_i : i\in I) + \Big[\, \sum_{i \not\in I} v_{i}^{\pm} \,\Big] 
 \quad \subset\quad  \Eff(e_1,\ldots,e_k),
\end{equation}
where $I$ is a $d$-element subset of $\{1,\ldots,k\}$
and $v_i^{\pm} \in \{v_i, v_{i+1} \}$ is an endpoint of $e_i$.
(There are $\binom{k}{d} 2^{k-d}$ such choices.)
Since $\Eff(e_1,\ldots,e_k)$ has infinite intersection
with the torsion packet $\tpacket{D}$,
there is some choice of 
$I, v_i^\pm$
such that the subset 
$\Eff(e_i : i\in I) + [\, \sum_{i \not\in I} v_{i}^{\pm} \,]$
in \eqref{eq:d-face} has infinite intersection with $\tpacket{D}$.
This implies that 
$\tpacket{D - \sum_{i\not\in I} v_i^\pm}$
is a degree-$d$ torsion packet 
which has infinite intersection with $\Eff(e_i : i\in I)$,
thus violating the degree $d$ Manin--Mumford condition.
\end{proof}

Next, we consider 
the converse situation of Proposition~\ref{prop:gamind-cycle-torsion}, i.e. 
when an edge set is acyclic after taking the closure in $M^\perp(G)$.
Recall from Section~\ref{subsec:matroids}
the notation $\Div(e_1,\ldots,e_k)$ and $\Eff(e_1,\ldots,e_k)$.
Here we introduce a slight variation:
let $\Div(e_1,\ldots,e_k)^\circ$
denote the set of effective divisors
of the form $D = x_1 + \cdots + x_k$ where $x_i$
is in the {\em interior} $e_i^\circ$ of edge $e_i$;
respectively 
let $\Eff(e_1,\ldots,e_k)^\circ$ denote the divisor classes of the form $[x_1+ \cdots + x_k]$,
where $x_i \in e_i^\circ$.


\begin{prop}
\label{prop:acyclic-nontorsion}
Suppose $e_1,\ldots, e_k$
are edges in $G$
such that $\{e_1,\ldots,e_k\}$ is independent in $M^\perp(G)$
and
the closure of $\{e_1,\ldots,e_k\}$
in $M^\perp(G)$
spans an acyclic subgraph of $G$.
Then 
for very general edge lengths on $\Gamma = (G,\ell)$, 
distinct divisor classes in $\Eff(e_1,\ldots,e_k)^\circ \subset \Pic^k(\Gamma)$ are in distinct torsion packets.
\end{prop}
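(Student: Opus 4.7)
The plan is to translate the torsion condition into a system of linear relations among the edge lengths $\ell$ and auxiliary slope/position parameters, and then to use the ``closure acyclic'' hypothesis to rule out solutions that would be valid for every $\ell$.

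First I would parametrize $D, E \in \Div(e_1,\ldots,e_k)^\circ$ by $D = \sum_i x_i(t_i)$ and $E = \sum_i x_i(s_i)$ with $t_i, s_i \in (0, \ell(e_i))$, setting $u_i = s_i - t_i$ so that $D = E$ as divisors iff $u = 0$. By Lemma~\ref{lem:d-torsion-slope}, $[D - E] \in \Jtors{\Gamma}$ iff some $f \in \PL_\RR(\Gamma)$ with $\Divisor(f) = D - E$ has all slopes rational. A local analysis at the poles and zeros of $f$ forces that after subdivision of $e_i$ at $t_i$ and $s_i$, the three resulting pieces carry slopes $m_i$, $m_i - 1$, $m_i$ for some $m_i \in \RR$, while on each edge $e \notin \{e_1,\ldots,e_k\}$ the slope is a single value $m_e$. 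Vanishing of $\Divisor(f)$ at every $v \in V(G)$ then says exactly that $m = (m_e)_{e \in E(G)}$ is a \emph{flow} on $G$ (zero net signed slope at each vertex).

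Next I would encode the constraint that $f$ be single-valued when integrated around every cycle. For a basis of cycles $\gamma_1, \ldots, \gamma_g$ of $G$ with signed incidence $a_{je} \in \{-1, 0, +1\}$, summing the edge-wise changes of $f$ around $\gamma_j$ yields
\begin{equation*}
A_0 u = \beta(\ell, m), \qquad (A_0)_{ji} = a_{j, e_i}, \qquad \beta_j(\ell, m) = \sum_{e \in E(G)} a_{je}\, m_e\, \ell_e.
\end{equation*}
The independence of $\{e_1,\ldots,e_k\}$ in $M^\perp(G)$, combined with Theorem~\ref{thm:cographic-dim}, makes the columns of $A_0$ linearly independent, so the system is solvable in $u$ iff $B\beta(\ell, m) = 0$, where the rows of $B \in \ZZ^{(g-k)\times g}$ span the orthogonal complement of $\mathrm{image}(A_0)$. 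Unwinding, this is a system of linear equations in $\ell$ whose coefficient on $\ell_e$ is $c_{re}\, m_e$ with $c_{re} := (BA)_{re}$. By construction $c_{r, e_i} = 0$, and for $e \notin \{e_1,\ldots,e_k\}$ one checks $c_{re} = 0$ for all $r$ iff the cycle-vector $(a_{je})_j$ lies in the column span of $A_0$, i.e.~iff $e$ belongs to the cographic closure $S$ of $\{e_1,\ldots,e_k\}$. Consequently $B\beta(\ell, m) \equiv 0$ identically in $\ell$ precisely when $\mathrm{supp}(m) \subset S$.

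Here the hypothesis enters. A flow on $G$ supported in $S$ is an element of $H_1(G|S, \RR) \subset H_1(G, \RR)$, where $G|S$ denotes the subgraph with edge set $S$; since $S$ is acyclic, $G|S$ is a forest and $H_1(G|S, \RR) = 0$, so any such flow vanishes. Hence any nonzero rational flow $m \in \QQ^{E(G)}$ has $\mathrm{supp}(m) \not\subset S$, and $B\beta(\ell, m) = 0$ then cuts out a proper linear subspace of the edge-length space $\RR_{>0}^{|E(G)|}$. Taking the union over this countable family of $m$ gives a countable union of codimension-one subvarieties, whose complement is very general; on this complement $A_0 u = \beta(\ell, m)$ admits no solution with $u \neq 0$ for any rational flow $m$, so no $D \neq E$ in $\Div(e_1,\ldots,e_k)^\circ$ can satisfy $[D - E] \in \Jtors{\Gamma}$, and distinct classes in $\Eff(e_1,\ldots,e_k)^\circ$ in particular lie in distinct torsion packets. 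The step that will require the most care is the identification of the ``trivial-in-$\ell$'' locus with flows supported in the cographic closure, where the matroid data from Theorem~\ref{thm:cographic-dim} has to be translated into the linear algebra of $A_0$ and $B$ with careful bookkeeping of signs and orientations on cycles.
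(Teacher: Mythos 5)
Your proposal is correct, and it takes a genuinely different route from the paper's proof.

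The paper's argument restricts $f$ to the deleted subgraph $\Gamma_0 = \Gamma\setminus\{e_1,\ldots,e_k\}$, writes the residual divisor as $\Divisor(f_0)=\sum_i\lambda_i w_i$ supported on the endpoints of the deleted edges, and splits into cases on whether $(\lambda_i)$ is zero or not. The zero case is handled by the ``current-active'' / ``voltage-active'' dichotomy (Lemmas~\ref{lem:v-active-cut}, \ref{lem:c-active-cycle}, \ref{lem:cact-or-vact}), using that $\{e_1,\ldots,e_k\}$ contains neither a cut nor a cycle; the nonzero case produces a non-bridge edge $e_*$ on which Lemma~\ref{lem:slope-nonconstant}, via the slope-current principle and Kirchhoff's formulas, shows the slope is a nonconstant rational function of the edge lengths of $\Gamma_0$, and the countable bad locus is indexed by rational tuples $(\lambda_i)$. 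Your argument instead stays on all of $G$, parametrizes the unknowns as a flow $m\in Z_1(G,\RR)$ together with the position-differences $u$, and reduces single-valuedness of $f$ to a linear system $A_0 u=\beta(\ell,m)$ in terms of a cycle-edge incidence matrix. The countable bad locus is indexed by rational flows $m$, and the key matroid input -- that $c_{re}=0$ for all $r$ iff $e$ lies in the cographic closure of $\{e_1,\ldots,e_k\}$ -- replaces the paper's case analysis and appeal to Lemma~\ref{lem:slope-nonconstant}. A pleasant feature of your version is that the excluded loci are literally linear in $\ell$ rather than ratios of polynomials; a small trade-off is that you must check the vertex/cycle conditions actually characterize when a given slope pattern arises from a legitimate $f$ with $\Divisor(f)=D-E$, and verify the identification of columns of $A$ as a realization of $M^\perp(G)$ (this is exactly the content of Theorem~\ref{thm:cographic-dim} via \cite{CV}), but both are standard. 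One remark worth making explicit in a write-up: the acyclicity hypothesis forces $k<g$ (otherwise the cographic closure would be all of $E(G)$, which contains a cycle when $g\ge 1$), so the matrix $B$ has at least one row and the compatibility condition is nonvacuous; and the $m=0$ case, which is excluded from your countable union, is handled by noting $A_0 u=0$ forces $u=0$, hence $D=E$, by independence of the columns of $A_0$.
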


Before proving this statement, we introduce some lemmas and definitions.
\begin{definition}
\label{dfn:cv-active}
Given a piecewise linear function $f$  on $\Gamma = (G,\ell)$, 
say an edge of $G$ is {\em current-active} with respect to $f$
if the slope $f'$ is nonzero in a neighborhood of 
its endpoints\footnote{if
 $e \cong [0,1]$, here a ``neighborhood of the endpoints'' means $[0,\epsilon) \cup (1 - \epsilon, 1]$
for some $\epsilon>0$};
let $E^{\rm c.a.}(G,f)$ denote the current-active edges,
\begin{equation*}
E^{\rm c.a.}(G,f) = \{e \in E(G) : f'\neq 0 \text{ in a neighborhood of } e^+,e^- \text{ in }e\}.
\end{equation*}
Say an edge is {\em voltage-active} with respect to $f$ 
if the net change in $f$ across $e$ is nonzero;
let $E^{\rm v.a.}(G,f)$ denote the voltage-active edges,
\begin{equation*}
E^{\rm v.a.}(G,f) = \{e  \in E(G) : f(e^+) - f(e^-) \neq 0
\quad\text{where }e = (e^+,e^-) \}.
\qedhere
\end{equation*}
\end{definition}

Recall that a {\em cut} of $G$ is a set of edges $\{e_1,\ldots,e_k\}$
such that the deletion $G\setminus \{e_1,\ldots,e_k\}$ 
is disconnected.
\begin{lem}
\label{lem:v-active-cut}
Consider a metric graph 
$\Gamma = (G,\ell)$
and  $f \in \PL_\RR(\Gamma)$.
If $E^{\rm v.a.}(G,f)$ is nonempty,
it contains a cut of $G$.
\end{lem}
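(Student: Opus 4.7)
The plan is a direct level-set argument. Since $E^{\rm v.a.}(G,f)$ is nonempty, pick some voltage-active edge $e$ and, without loss of generality (swapping the roles of $e^+$ and $e^-$ if needed), assume $f(e^-) < f(e^+)$. Choose any real number $c$ with $f(e^-) < c \leq f(e^+)$, and partition the vertex set as
\[
V(G) = V_{<c} \sqcup V_{\geq c}, \qquad V_{<c} = \{v : f(v) < c\}, \quad V_{\geq c} = \{v : f(v) \geq c\}.
\]
Both parts are nonempty, since $e^- \in V_{<c}$ and $e^+ \in V_{\geq c}$.

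Next I would let $A \subset E(G)$ be the set of edges with one endpoint in $V_{<c}$ and the other in $V_{\geq c}$. By construction, every path in $G$ from a vertex of $V_{<c}$ to a vertex of $V_{\geq c}$ must cross an edge of $A$, so after deleting $A$ the subgraph $G \setminus A$ contains no such path; hence $G \setminus A$ is disconnected and $A$ is a cut of $G$. Note also that $A$ is nonempty, since $e \in A$.

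Finally, I would verify $A \subset E^{\rm v.a.}(G,f)$: if $e' \in A$ then its two endpoints have $f$-values on opposite sides of $c$ (one strictly less than $c$, the other at least $c$), so in particular $f((e')^+) - f((e')^-) \neq 0$, i.e. $e'$ is voltage-active. Thus $A$ is a cut contained in $E^{\rm v.a.}(G,f)$, as required.

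There isn't really a hard step here; the only thing to be slightly careful about is the choice of $c$ so that the partition puts the two endpoints of $e$ on opposite sides, which is guaranteed by the inequality $f(e^-) < c \leq f(e^+)$. The argument does not use the metric or linearity of $f$ along edges at all, only the values of $f$ at vertices of the fixed combinatorial model $G$.
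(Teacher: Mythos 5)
Your proof is correct and follows essentially the same level-set/threshold argument as the paper: the paper simply fixes $c = f(e^+)$ rather than allowing an arbitrary $c$ in the interval $(f(e^-), f(e^+)]$, but the partition of $V(G)$ and the verification that the crossing edges form a cut contained in $E^{\rm v.a.}(G,f)$ are identical.
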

\begin{proof}
Suppose $e = (e^+,e^-)$ is voltage-active with respect to $f$,
so that $f(e^+) > f(e^-)$
for some ordering of endpoints.
Then we may partition $V(G)$ into two nonempty sets
$V^+ \cup V^-$,
where 
\[
V^+ = \{ v\in V(G) : f(v) \geq f(e^+) \}
\qquad\text{and}\qquad 
V^- = \{v \in V(G) : f(v) < f(e^+) \}.
\]
It is clear that $E^{\rm v.a.}(G,f)$ contains all edges
between $V^+$ and $V^-$;
such edges form a cut of $G$.
\end{proof}

\begin{lem}
\label{lem:c-active-cycle}
On $\Gamma = (G,\ell)$,
consider  $f \in \PL_\RR(\Gamma)$ such that
$\Divisor(f) = D - E$
for  $D,E \in \Div(e_1,\ldots,e_k)^\circ$.
If $E^{\rm c.a.}(G,f)$ is nonempty, then it
contains a cycle of $G$.
\end{lem}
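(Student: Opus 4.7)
The plan is to extract from $f$ a closed $1$-chain $\sigma$ on $G$ whose support is precisely $E^{\rm c.a.}(G,f)$, and then invoke the standard fact that the support of a nonzero element of $H_1(G,\RR)$ contains a graph-theoretic cycle.

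The key step is the observation that, even though $f$ may have several distinct slopes along a single $e_j$, its two ``endpoint slopes'' on $e_j$ coincide. Let $s_{e_j}^-$ and $s_{e_j}^+$ denote the slopes of $f$ near $e_j^-$ and $e_j^+$ respectively, both measured in the orientation $e_j^- \to e_j^+$. Inspecting the definition of $\Divisor(f)$ at an interior breakpoint $x \in e_j^\circ$ shows that the slope jump of $f$ across $x$ equals $E(x) - D(x)$. Summing along $e_j$ yields
\[
s_{e_j}^+ - s_{e_j}^- \;=\; \deg(E|_{e_j^\circ}) - \deg(D|_{e_j^\circ}).
\]
By hypothesis $D, E \in \Div(e_1,\ldots,e_k)^\circ$, so each $e_j$ is assigned equal total mass (namely, the number of times it appears in the tuple $e_1,\ldots,e_k$) by $D$ and by $E$; the right-hand side therefore vanishes and $s_{e_j}^+ = s_{e_j}^-$. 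For an edge $e \notin \{e_1,\ldots,e_k\}$ the function $f$ is already linear on $e$, so this equality is trivial. Thus every edge $e \in E(G)$ carries a well-defined endpoint slope $s_e \in \RR$, and $e \in E^{\rm c.a.}(G,f)$ if and only if $s_e \neq 0$.

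Next I form the $1$-chain $\sigma = \sum_{e \in E(G)} s_e \cdot e \in C_1(G,\RR)$. Kirchhoff's current law at each vertex $v \in V(G)$---which is valid precisely because $\Supp(D) \cup \Supp(E)$ is disjoint from $V(G)$---asserts that the signed sum of the $s_e$ over edges incident to $v$ vanishes; this is exactly $\partial \sigma = 0$, so $\sigma \in H_1(G,\RR)$. The hypothesis that $E^{\rm c.a.}(G,f)$ is nonempty says $\sigma \neq 0$. I then apply the standard fact that a nonzero $1$-cycle in $C_1(G,\RR)$ has support containing a cycle of $G$: a nonzero $1$-chain with zero boundary cannot be supported on a forest, since forests have trivial $H_1$. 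The support of $\sigma$ is $E^{\rm c.a.}(G,f)$, which therefore contains a cycle of $G$.

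The main conceptual step is the first one: recognizing that the hypothesis $D,E \in \Div(e_1,\ldots,e_k)^\circ$ (equal mass on each $e_j$) is exactly what converts the potentially multi-valued current ``function'' on $\Gamma$ into a single-valued $1$-cycle on the combinatorial graph $G$. The only technical wrinkle is careful bookkeeping of sign conventions and of loop edges, where the two ``ends'' of $e_j$ correspond to the same vertex but contribute with opposite orientations.
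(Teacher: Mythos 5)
Your proof is correct and follows essentially the same line as the paper: both hinge on the observation that the two endpoint slopes of $f$ on each edge coincide (so that $E^{\rm c.a.}(G,f)$ is the support of a well-defined per-edge slope $s_e$), and both use conservation of slope at each vertex to conclude that a cycle exists. The only cosmetic difference is that you phrase the conclusion via $\partial\sigma = 0$ and the triviality of $H_1$ on forests, whereas the paper notes directly that every vertex of the support has valence $0$ or $\geq 2$; these are interchangeable standard facts.
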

\begin{proof}
Suppose $D = x_1 + \cdots + x_k$ and $E = y_1 + \cdots + y_k$
where $x_i, y_i \in e_i$.
Since the divisor $\Divisor(f)$ restricted to $e_i$ has the form $x_i - y_i$,
the slopes of $f$ along $e_i$ are as shown in Figure \ref{fig:edge-slopes},
where slopes are measured in the rightward direction.
\begin{figure}[h]
\centering
\includegraphics[scale=0.5]{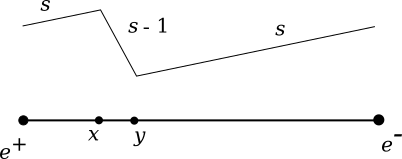}
\caption{Slopes on edge $e$ where $\Divisor(f) = x - y$.}
\label{fig:edge-slopes}
\end{figure}

\noindent
Edge $e_i$ is current-active
if and only if the corresponding slope $s$ ($ = s_i$) is nonzero. 
In particular, if $e_i \in E^{\rm c.a}(G,f)$ it is current-active {\em at both endpoints}.

On the other hand, consider an edge $e \in E(G) \setminus \{e_1,\ldots,e_k\}$.
Then $\Divisor(f)$ is not supported on $e$, so $f$ does not change slope on $e$.
Again in this case, if $e \in E^{\rm c.a.}(G,f)$ then 
it is current-active at both endpoints.

By assumption that divisors $D,E$ are in $\Div(e_1,\ldots,e_k)^\circ$,
the divisor $\Divisor(f) = D - E$ 
is supported away from the vertex set $V(G)$. 
This means that around a vertex $v$, the outward slopes of $f$ sum to zero.
The number of nonzero terms in the sum must be $0$ or $\geq 2$,
and each nonzero term corresponds to a current-active edge incident to $v$.
Thus
\begin{equation*}
\begin{gathered}
E^{\rm c.a.}(G,f)
\text{ spans a subgraph of $G$ where } \\
\text{ every vertex has } 
\val(v) = 0 \text{ or } \val(v) \geq 2.
\end{gathered}
\end{equation*}
The claim follows.
\end{proof}

\begin{lem}
\label{lem:cact-or-vact}
Consider  
$D,E \in \Div(e_1,\ldots,e_k)^\circ$
and $f \in \PL_\RR(\Gamma)$ such that
$\Divisor(f) = E - D$.
If $D \neq E$,
then $E^{\rm v.a.}(G,f)$ or $E^{\rm c.a.}(G,f)$
is nonempty (or both are).
\end{lem}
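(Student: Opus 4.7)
The plan is to prove the contrapositive: assuming both $E^{\rm c.a.}(G,f)$ and $E^{\rm v.a.}(G,f)$ are empty, I will show $D = E$. Write $D = x_1 + \cdots + x_k$ and $E = y_1 + \cdots + y_k$ with $x_i, y_i$ in the interior $e_i^\circ$.

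First I would analyze the restriction of $f$ to each edge $e \in E(G) \setminus \{e_1,\ldots,e_k\}$. Since $\Divisor(f) = E - D$ is supported on $\{x_i,y_i\}\subset e_i^\circ$, the function $f$ has constant slope along the interior of such an $e$. Because $e \notin E^{\rm c.a.}(G,f)$, this slope is zero near the endpoints, hence identically zero, so $f(e^+) = f(e^-)$ and $e$ contributes nothing to the voltage.

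Next I would analyze $f$ along each $e_i$. Parametrize $e_i$ by $[0, \ell(e_i)]$ and let $s_0$ denote the slope of $f$ in a neighborhood of $e_i^-$. Inside $e_i^\circ$, the slope of $f$ changes only at $x_i$ and $y_i$, where the jumps are prescribed by $\Divisor(f)|_{e_i} = y_i - x_i$. Thus as $t$ increases from $0$ to $\ell(e_i)$, the slope takes three successive values whose jumps sum to zero (consistent with the fact that the slope at $e_i^+$ equals $s_0$ by the hypothesis $e_i \notin E^{\rm c.a.}(G,f)$), and in the middle segment between $x_i$ and $y_i$ the slope equals $s_0 \pm 1$. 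The assumption $s_0 = 0$ (again from $e_i \notin E^{\rm c.a.}(G,f)$) forces the slope to vanish outside $[\min(x_i,y_i),\max(x_i,y_i)]$ and to equal $\pm 1$ on that segment. Integrating gives
\[
f(e_i^+) - f(e_i^-) = \pm \, d(x_i, y_i).
\]
The assumption $e_i \notin E^{\rm v.a.}(G,f)$ then forces $d(x_i,y_i) = 0$, i.e.\ $x_i = y_i$.

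Applying this conclusion for every $i$ yields $D = E$, establishing the contrapositive. The argument is essentially bookkeeping on slope data edge by edge, so there is no real obstacle beyond carefully recording how the slope jumps at the two interior prescribed divisor points combine with the ``endpoint slope is zero'' condition from the hypothesis on $E^{\rm c.a.}(G,f)$; the only subtlety to track is the sign convention for the slope jumps induced by $\Divisor(f)|_{e_i} = y_i - x_i$, which does not affect the magnitude of the voltage drop.
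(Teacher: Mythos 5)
Your argument is correct and follows essentially the same approach as the paper: both rest on the observation that on an edge $e_i$ with $x_i \neq y_i$, the slope near both endpoints equals the same value $s$ and the voltage drop equals $s\,\ell(e_i) - \ell([x_i,y_i])$, so vanishing of the former forces non-vanishing of the latter. Your contrapositive framing and the preliminary discussion of edges outside $\{e_1,\ldots,e_k\}$ are harmless but unnecessary, since the paper's direct argument only needs to examine the single edge where $x_i \neq y_i$.
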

\begin{proof}
If $D = x_1 + \cdots + x_k$ is not equal to 
$E = y_1 + \cdots + y_k$,
then there is some index $i$ such that $x_i \neq y_i$.
Consider the illustration of $f$ in Figure~\ref{fig:edge-slopes},
applied to the edge with $x_i \neq y_i$.
We have
\begin{equation}
f(e^-_i) - f(e^+_i) = s \cdot \ell(e_i) - \ell([x_i,y_i]),
\end{equation}
where $\ell([x_i,y_i])$ is the distance between $x_i$ and $y_i$ on $e_i$.
If $s = 0$, then $e_i$ is not current-active but is voltage-active.
If $s = \ell([x_i,y_i]) / \ell(e_i)$, then $e_i$ is not voltage-active but is current-active.
\end{proof}

\begin{lem}
\label{lem:slope-nonconstant}
Consider a fixed vertex-supported $\RR$-divisor $D =  \lambda_1 v_1 + \cdots + \lambda_r v_r$ 
of degree zero on $G$,
so $v_i \in V(G)$, $\lambda_i \in \RR$ and $\sum_{i=1}^r \lambda_i = 0$.
On $\Gamma = (G,\ell)$, suppose $f \in \PL_\RR(\Gamma)$ satisfies $\Divisor(f) = D $
and  $f$ has nonzero slope on $e\in E(G)$. 
If $e$ is not a bridge,
then the slope on $e$ is a nonconstant rational function of edge lengths 
of $\Gamma$. 
\end{lem}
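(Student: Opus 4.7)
The plan is to reduce the claim to slopes of unit potential functions and then apply Kirchhoff's formula \eqref{eq:current} directly. First I would pick a basepoint $v_0 \in V(G)$ and use the degree-zero condition $\sum_i \lambda_i = 0$ to rewrite
\[
D = \sum_{i=1}^r \lambda_i (v_i - v_0) = \Divisor\Big(\sum_{i=1}^r \lambda_i \potent{v_0}{v_i}\Big).
\]
Since any two solutions to $\Divisor(\cdot) = D$ differ by an additive constant, the slope of $f$ on $e$ equals $\sum_i \lambda_i\, s_i(\ell)$, where $s_i(\ell)$ is the slope of $\potent{v_0}{v_i}$ on $e$. By Kirchhoff's theorem (Theorem~\ref{thm:kirchhoff}) each $s_i$ is an explicit rational function of the edge lengths, so the slope of $f$ on $e$ is rational in $\ell$.

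To show nonconstancy, I would fix every edge length other than $\ell(e)$ and track the $\ell(e)$-dependence in \eqref{eq:current}. In the numerator $\sum_T \sgn(T, v_i, v_0, \vec e)\, w(T)$, only spanning trees $T$ containing $e$ contribute, since $\sgn(T, v_i, v_0, \vec e) \neq 0$ requires the path from $v_i$ to $v_0$ inside $T$ to pass through $e$, which forces $e \in E(T)$; and for such trees the weight $w(T) = \prod_{e_j \notin E(T)} \ell(e_j)$ does not involve $\ell(e)$. In the denominator $\sum_T w(T)$, the trees split into those containing $e$ (contribution independent of $\ell(e)$) and those avoiding $e$ (contribution proportional to $\ell(e)$). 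Thus $s_i(\ell(e)) = A_i/(B_0 + B_1 \ell(e))$ for constants $A_i, B_0, B_1$ independent of $\ell(e)$, with $B_1 > 0$ precisely because $e$ is not a bridge (some spanning tree avoids $e$). Summing over $i$, the slope of $f$ on $e$ takes the form $(\sum_i \lambda_i A_i)/(B_0 + B_1 \ell(e))$, which tends to $0$ as $\ell(e) \to \infty$.

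Since the slope of $f$ on $e$ is nonzero at the given edge lengths but vanishes in the limit $\ell(e) \to \infty$, it cannot be a constant function of $\ell(e)$, and hence not a constant function of the full tuple of edge lengths. The main technical point requiring care is the $\ell(e)$-accounting in Kirchhoff's formula—especially the observation that the numerator loses its $\ell(e)$-dependence once the $\sgn = 0$ trees are discarded—after which the non-bridge hypothesis translates directly into $B_1 \neq 0$ and the rest is routine bookkeeping.
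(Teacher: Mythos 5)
Your proof is correct, but it takes a genuinely different route from the paper's. The paper argues analytically: it invokes the slope-current principle (Proposition~\ref{prop:slope-current}) to get a uniform bound $|f'| \leq \Lambda$ with $\Lambda = \frac12\sum_i |\lambda_i|$ independent of the edge lengths, then uses the non-bridge hypothesis to produce an alternative simple path $\pi$ from $e^+$ to $e^-$ avoiding $e$, so that integration along $\pi$ gives $|f(e^+) - f(e^-)| \leq \Lambda\,\ell(\pi)$ and hence $|f'(e)| \leq \Lambda\,\ell(\pi)/\ell(e) \to 0$ as $\ell(e)\to\infty$; Kirchhoff's theorem is cited only at the end for rationality. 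You instead work algebraically inside Kirchhoff's formula \eqref{eq:current}: after decomposing $f$ as $\sum_i \lambda_i\, \potent{v_0}{v_i}$, you observe that every spanning tree contributing to the numerator must contain $e$ (so the numerator is $\ell(e)$-free), while the denominator is an affine polynomial $B_0 + B_1\ell(e)$ with $B_1 > 0$ precisely because some spanning tree avoids the non-bridge edge $e$. Both arguments reach the same conclusion that the slope tends to $0$ as $\ell(e)\to\infty$ with all other lengths fixed. Your version is self-contained and more explicit — it produces a closed-form $(\sum_i\lambda_i A_i)/(B_0 + B_1\ell(e))$ and folds rationality and nonconstancy into one computation — whereas the paper's version factors out the bound cleanly through an already-established general principle and avoids bookkeeping inside the spanning-tree sums. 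The non-bridge hypothesis enters in structurally parallel but distinct ways: for you as $B_1\neq 0$, for the paper as the existence of $\pi$.
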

\begin{proof}
Suppose  
 we let $\ell(e) \to \infty$ for the chosen edge
 and fix the lengths of all other edges $e' \neq e$; 
we claim that the slope of $f$ across $e$ approaches zero.

The slope-current principle, Proposition~\ref{prop:slope-current},
states that the slope of $f$ 
is bounded above in magnitude by $\Lambda$,
where $\Lambda = \frac{1}{2}\sum_{i } |\lambda_i|$ 
does not depend on the edge lengths.\footnote{Since
$\sum \lambda_i = 0$, we have 
$\Lambda = \sum\{ \lambda_i : \lambda_i > 0 \}= -\sum \{\lambda_i : \lambda_i < 0\}$.}
Since $e = (e^+, e^-)$ is not a bridge edge, there is a simple path $\pi$ from $e^+$ to $e^-$ which does not contain $e$.
By integration along $\pi$,  $|f(e^-) - f(e^+)|$ is bounded above by
$\Lambda \cdot \ell(\pi)$,
which implies the bound
\begin{equation*}
 |f'(e)| = \left| \frac{f(e^-) - f(e^+)}{\ell(e)} \right| \leq \frac{\Lambda \cdot \ell(\pi)}{\ell(e)} .
\end{equation*}
If we let $\ell(e) \to \infty$ and keep $\ell(e')$ constant
for each $e' \in E(G) \setminus \{e \}$,
this upper bound approaches zero as claimed.

Thus the slope of $f$ along 
$e$ is a non-constant function of the edge lengths.
It is a rational function by Kirchhoff's formulas, Theorem \ref{thm:kirchhoff}.
\end{proof}

\begin{proof}[Proof of Proposition~\ref{prop:acyclic-nontorsion}]
Suppose $D = x_1 + \cdots + x_k$
and $E = y_1 + \cdots + y_k$
are divisors in $\Div(e_1,\ldots,e_k)^\circ$.
Let $f$ be a piecewise linear function such that $\Divisor(f) = E- D$.
By Lemma~\ref{lem:d-torsion-slope}, 
$[D]$ and $ [E]$ lie in the same torsion packet
if and only if 
all slopes of $f$ are rational.

Let $\Gamma_0$ (resp. $G_0$) denote the metric graph (resp. combinatorial graph) obtained from deleting 
the interiors of edges $e_1,\ldots, e_k$ from $\Gamma$ (resp. $G$).
Let  $f_0 = f\big|_{\Gamma_0}$ denote the restriction of $f$ to 
$\Gamma_0$.
We have
\begin{equation}
\label{eq:div-f0}
 \Divisor(f_0) = \lambda_1 w_1 + \cdots + \lambda_{r} w_{r},
\end{equation}
where $\{w_1,\ldots, w_r\}\subset V(G)$ is the set of endpoints of edges $e_1,\ldots, e_k$
and $\lambda_i \in \RR$.

First, suppose the tuple
$(\lambda_1,\ldots, \lambda_r) = (0,\ldots, 0)$.
Then $f_0$ is constant,
so every edge of $G_0$ is neither current-active nor voltage-active
with respect to $f$.
Since the edges $\{e_1,\ldots,e_k\}$ are assumed independent in $M^\perp(G)$,
they do not contain a cut of $G$
so the inclusion $E^{\rm v.a.}(G,f) \subset \{e_1,\ldots,e_k\}$
implies that ${ E^{\rm v.a.}(G,f) = \varnothing }$
by Lemma~\ref{lem:v-active-cut}.
Since the edges $\{e_1,\ldots,e_k\}$ do not contain a cycle of $G$,
the inclusion $E^{\rm c.a.}(G,f) \subset \{e_1,\ldots,e_k\}$
implies that $E^{\rm c.a.}(G,f) = \varnothing$
by Lemma~\ref{lem:c-active-cycle}.
Then Lemma~\ref{lem:cact-or-vact} implies that $D = E$.

Next, suppose the tuple
$(\lambda_1,\ldots, \lambda_r) \in \RR^r$ 
from \eqref{eq:div-f0} is nonzero.
This means that some edge of $G_0$
must be current-active,
so $E^{\rm c.a.}(G,f)$ is nonempty.
By Lemma~\ref{lem:c-active-cycle},
$E^{\rm c.a.}(G,f)$ contains a cycle of $G$.
The closure of $\{e_1,\ldots,e_k\}$ 
with respect to the cographic matroid $M^\perp(G)$
is equal to
\begin{equation*}
\{e_1,\ldots, e_k\} \cup \{b_1, \ldots, b_j\}
\quad\text{where $\{b_1, \ldots, b_j\}$
are the bridge edges of $G_0$.}
\end{equation*}
By assumption that
$\{e_1,\ldots, e_k\} \cup \{b_1, \ldots, b_j\}$
is acyclic,
$E^{\rm c.a.}(G,f)$
must contain an edge
$e_* \not\in \{e_1, \ldots, e_k\}$
which is not a bridge in $G_0$.
(The  edge $e_*$
depends on the tuple $(\lambda_1,\ldots, \lambda_r)$.)

Now consider applying Lemma~\ref{lem:slope-nonconstant} to the graph $G_0$,
the divisor \eqref{eq:div-f0}, and the edge $e_* \in E(G_0)$.
The lemma concludes that
as a function of the edge-lengths of $\Gamma_0$,
the slope of $f_0$ (equivalently $f$) on $e_*$
is a nonconstant ratio of polynomials.
In particular,
\begin{equation}
\label{eq:v-irrational}
V(\lambda_1,\ldots, \lambda_r) = 
\{ \text{edge lengths of $\Gamma_0$ such that $f_0'$ is irrational on $e_*$} \} 
\end{equation}
is a very general subset of $ \RR_{>0}^{m-k} \cong \{ \ell_0 : E(G_0) \to \RR_{>0} \}$,
and on this subset we have 
$[D]$ and $[E]$ are in distinct torsion packets.

Finally, let $U(\lambda_1,\ldots, \lambda_r)$ be the preimage of $V(\lambda_1,\ldots, \lambda_r)$
under the projection $\RR_{>0}^m \to \RR_{>0}^{m-k}$,
which is very general,
and let
\begin{equation*}
U = \bigcap_{\substack{(\lambda_1, \ldots, \lambda_r) \qquad \\ \in \QQ^r \setminus (0,\ldots, 0)}} 
U(\lambda_1,\ldots, \lambda_r)
\quad\subset\quad \RR_{>0}^m .
\end{equation*}
The subset $U$ is very general,
as a countable intersection of very general subsets.

If edge lengths of $\Gamma = (G,\ell)$ are chosen 
such that there are distinct divisors $D,E \in \Eff(e_1,\ldots, e_k)^\circ$
where $[D]$ and $[E]$ are in the same torsion packet,
then  the tuple
$(\lambda_1, \ldots, \lambda_r)$
as in \eqref{eq:div-f0}
must be rational and  nonzero.
Then the chosen edge lengths on $G_0 \subset G$
are excluded from the subset \eqref{eq:v-irrational},
hence the edge lengths 
are excluded also from $U$, as desired.
\end{proof}

\begin{thm}[Theorem~\ref{thm:intro-mm-higher-degree}]
\label{thm:mm-higher-degree}
Let $G$ be a connected graph of genus $g\geq 1$
and independent girth $\gamind$.
The metric graph $\Gamma = (G,\ell)$ 
satisfies the degree $d$ Manin--Mumford condition
for very general edge lengths $\ell: E(G) \to \RR_{>0}$
if and only if $1 \leq d < \gamind$.
\end{thm}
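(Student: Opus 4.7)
The necessity direction is immediate. If $d \geq \gamind$, pick a cycle $C$ of $G$ with ${\rm rk}^\perp(E(C)) = \gamind$; Proposition~\ref{prop:gamind-cycle-torsion} applied to $C$ gives failure of the degree-$\gamind$ Manin--Mumford condition for every choice of edge lengths, and the monotonicity of the condition in $d$ (noted in the introduction just after its definition) propagates this failure to degree $d$.

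For the sufficiency direction, assume $1 \leq d < \gamind$. The plan is to cover $\Eff^d(\Gamma)$ by a finite family of cells and show that each cell contributes at most one representative of any given torsion packet on a very general subset of edge lengths. Theorem~\ref{thm:abks-deg-d} already provides such a cover by $\Eff(I) := \Eff(e_{i_1},\ldots,e_{i_d})$ as $I = \{e_{i_1},\ldots,e_{i_d}\}$ ranges over the $d$-element independent sets of the cographic matroid $M^\perp(G)$. Since Proposition~\ref{prop:acyclic-nontorsion} only controls the relative interior $\Eff(I)^\circ$, I would further stratify each $\Eff(I)$ according to which coordinates land at vertices: for each subset $J \subseteq I$ and each assignment of a vertex endpoint to each edge of $I \setminus J$, the resulting stratum is a translate of $\Eff(J)^\circ$ by a vertex-supported divisor, and these strata are finite in number and cover $\Eff^d(\Gamma)$.

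The key check is that Proposition~\ref{prop:acyclic-nontorsion} applies to each stratum. As a subset of the independent set $I$, the edge set $J$ is itself independent in $M^\perp(G)$ with ${\rm rk}^\perp(J) = |J| \leq d$. If the matroidal closure of $J$ in $M^\perp(G)$ contained the edge set of a cycle $C$ of $G$, then ${\rm rk}^\perp(E(C)) \leq {\rm rk}^\perp(\overline{J}) = |J| \leq d < \gamind$, contradicting the definition of $\gamind$; thus the closure of $J$ spans an acyclic subgraph of $G$. Proposition~\ref{prop:acyclic-nontorsion} then supplies a very general set of edge lengths on which distinct classes in $\Eff(J)^\circ$ lie in distinct torsion packets, and the conclusion transfers to the translate stratum since translation by a fixed vertex-supported divisor does not change which pairs of classes differ by a torsion element. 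Intersecting over the finitely many strata yields a single very general set $U$ on which every torsion packet $\tpacket{D}$ meets each stratum in at most one point, and is therefore finite; Proposition~\ref{prop:mm-torsion-packet} then identifies this with the degree $d$ Manin--Mumford condition.

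I expect the main obstacle to be precisely the matroid-closure step, which converts the combinatorial hypothesis $d < \gamind$ into the acyclic-closure hypothesis of Proposition~\ref{prop:acyclic-nontorsion}. The boundary stratification is also delicate: without it one would only control interiors of maximal cells, leaving open the possibility of infinitely many torsion classes on their lower-dimensional boundaries. Once these combinatorial ingredients are in place, the rest of the argument is an organizational assembly of Theorem~\ref{thm:abks-deg-d}, Proposition~\ref{prop:acyclic-nontorsion}, and the finiteness of the stratification.
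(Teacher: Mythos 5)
Your proof is correct and follows essentially the same route as the paper: necessity via Proposition~\ref{prop:gamind-cycle-torsion} together with monotonicity in $d$, and sufficiency by covering $\Eff^d(\Gamma)$ via Theorem~\ref{thm:abks-deg-d} and applying Proposition~\ref{prop:acyclic-nontorsion} cell by cell, using the matroid-closure argument (${\rm rk}^\perp(E(C)) \leq {\rm rk}^\perp(\overline{J}) = |J| \leq d < \gamind$) to verify the acyclic-closure hypothesis.

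The one place you go beyond the paper is the boundary stratification: the paper states, without elaboration, that Proposition~\ref{prop:acyclic-nontorsion} forces each torsion packet to meet the closed cell $\Eff(e_1,\ldots,e_d)$ in at most one point, whereas that proposition as written only controls the relative interior $\Eff(e_1,\ldots,e_d)^\circ$. Your further decomposition into strata of the form $\Eff(J)^\circ$ translated by a fixed vertex-supported divisor class (with $J$ ranging over subsets of a maximal independent set, hence still independent, and still satisfying the closure condition) supplies the detail the paper leaves implicit. Since translation by a fixed class preserves which differences are torsion, and there are only finitely many strata, this correctly yields finiteness of every degree-$d$ torsion packet. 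So this is the same argument, with a small gap in the published proof filled in rather than a genuinely different method.
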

\begin{proof}
If $d\geq \gamind$, then 
$d \geq {\rm rk}^\perp(E(C))$
for some cycle $C$ of $G$.
Proposition~\ref{prop:gamind-cycle-torsion} states that 
$\Gamma$ fails
the Manin--Mumford condition in degree $d' = {\rm rk}^\perp(E(C))$,
so the condition also fails in degree $d\geq d'$.

Conversely if $d < \gamind$,
then for each $d$-subset of edges $ \{e_1,\ldots, e_d\}$,
its closure in $M^\perp(G)$ 
does not contain a cycle of $G$.
In particular, 
the edges for each maximal cell 
$\Eff(e_1,\ldots, e_d)$ 
of
$\Eff^d(\Gamma)$
satisfy the hypotheses of Proposition~\ref{prop:acyclic-nontorsion},
so there is a very general subset of edge lengths of $\Gamma$
for which every degree $d$ torsion packet
has at most one element in the chosen cell
$\Eff(e_1,\ldots, e_d)$.
Since there are finitely many maximal cells (cf. Theorem~\ref{thm:abks-deg-d}),
this implies that for very general edge lengths 
there are finitely many elements in each degree $d$ torsion packet.
\end{proof}

\begin{cor}
\label{cor:ind-girth-mm}
There exists a constant $M$ such that
for any metric graph  $\Gamma$ of genus $g \geq 1$,
if $\Gamma$ satisfies the Manin--Mumford condition in degree $d$,
then we have
\[ d < M \log g . \qedhere\]
\end{cor}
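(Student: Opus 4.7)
The plan is to derive this directly by composing two earlier results: the cycle-based obstruction (Proposition~\ref{prop:gamind-cycle-torsion}), which shows $\gamind(\Gamma)$ is an upper bound on the Manin--Mumford degree for \emph{any} edge lengths, with the logarithmic bound on independent girth (Corollary~\ref{cor:metric-girth-bound}). Both of these have already been proved earlier in the paper, so the argument is short.

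First I would reduce to the case $g \geq 2$. When $g = 1$, the Jacobian has positive dimension and the degree-$1$ Abel--Jacobi map is already surjective, so $\Jtors{\Gamma}$ is infinite and the degree-$d$ Manin--Mumford condition fails for every $d \geq 1$; the hypothesis of the corollary is therefore vacuous and the statement holds trivially for any choice of $M$.

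Now assume $g \geq 2$ and that $\Gamma$ satisfies the Manin--Mumford condition in degree $d$. Fix any combinatorial model $\Gamma = (G,\ell)$ and let $C \in \mathcal{C}(G)$ be a cycle achieving the minimum in Definition~\ref{dfn:gamind-metric}, so that ${\rm rk}^\perp(E(C)) = \gamind(G) = \gamind(\Gamma)$. By Proposition~\ref{prop:gamind-cycle-torsion}, $\Gamma$ fails to satisfy the Manin--Mumford condition in degree $\gamind(\Gamma)$, regardless of the edge lengths. Since the degree-$d$ Manin--Mumford condition implies the condition in all degrees $d' \leq d$ (as noted in the introduction), its failure in degree $\gamind(\Gamma)$ forces the strict inequality
\[
d < \gamind(\Gamma).
\]

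Finally, applying Corollary~\ref{cor:metric-girth-bound}, there is a universal constant $M$ (independent of $\Gamma$) with $\gamind(\Gamma) < M \log g$. Chaining the two inequalities yields $d < M \log g$, as desired. There is no real obstacle: both ingredients are in hand, and the only minor care needed is to dispose of the $g = 1$ boundary case where $\log g = 0$, which is handled by the vacuousness of the hypothesis.
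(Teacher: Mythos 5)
Your proof is correct and follows essentially the same route as the paper: combine Proposition~\ref{prop:gamind-cycle-torsion} (giving $d < \gamind$) with Corollary~\ref{cor:metric-girth-bound} (giving $\gamind < M\log g$). The only difference is that you explicitly handle the $g = 1$ boundary case, where $\log g = 0$ but the hypothesis is vacuous; the paper's proof leaves this implicit, so your version is slightly more careful.
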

\begin{proof}
Let $\gamind = \gamind(\Gamma)$ denote the independent girth.
The result follows from Proposition~\ref{prop:gamind-cycle-torsion}, 
which implies that $d < \gamind$,
and the bound $\gamind  < M \log g$
from Corollary~\ref{cor:metric-girth-bound}.
\end{proof}

\section*{Acknowledgements}
The author gratefully thanks Sachi Hashimoto for the initial suggestion to study the tropical analogue of the Manin--Mumford conjecture, 
and David Speyer for suggesting the generalization to higher degree.
We thank Matt Baker for providing valuable discussion and several references to related work.
We thank the anonymous referees for many suggestions which improved this paper.
The material in this article initially appeared in the author's PhD dissertation.
This work was supported by the National Science Foundation [DMS-1600223, DMS-1701576]
and a Rackham Predoctoral Fellowship.

\bibliography{manin-mumford} 
\bibliographystyle{abbrv}

\end{document}